
\newif\ifpictures
\picturestrue

\documentclass[11pt]{amsart}
\usepackage[T1]{fontenc}
\usepackage{charter}
\usepackage[expert]{mathdesign}
\usepackage{latex_base}
\usepackage{tikz-cd}    
\usepackage{mathrsfs}
\usepackage{mathtools}
\usepackage{macros}
\usepackage{afterpage}
\usepackage{lipsum} 
\usepackage{xcolor}
\usepackage{amsmath}
\usepackage[dash,dot]{dashundergaps}
\usepackage{tikz,ifthen,fp,calc}
\usetikzlibrary{patterns}
\usetikzlibrary{shapes,arrows,shadows,positioning}
\usetikzlibrary{backgrounds}
\setlength{\parskip}{0 cm}			


\tikzset{
  treenode/.style = {
                      align=center},
  root/.style     = {treenode, font=\small},
  env/.style      = {treenode, font=\small},
  dummy/.style    = {circle,draw}
}





\title[]{Counting isolated points outside the image of a polynomial map}
\thanks{Part of this work was supported by the Austrian Science Fund (FWF) P33003}

\author{Boulos El Hilany}
\thanks{MSC: Primary 14E05, Secondary: 12D10,  52B20}
\thanks{Key words: Polynomial maps on the plane, the Jelonek set, Newton polytopes}

\begin{document}

\maketitle

\begin{abstract} 
We consider a generic family of polynomial maps $f:=(f_1,f_2):\mathbb{C}^2\rightarrow\mathbb{C}^2$ with given supports of polynomials, and degree $\deg f:=\max (\deg f_1, \deg f_2)$. We show that the (non-) properness of maps $f$ in this family depends uniquely on the pair of supports and that the set of isolated points in $\mathbb{C}^2\setminus f(\mathbb{C}^2)$ has a size of at most $6\deg f$. This improves an existing upper bound $(\deg f - 1)^2$ proven by Jelonek. Moreover, for each $n\in\mathbb{N}$, we construct a dominant map $f$ above, with $\deg f = 2n+2$, and having $2n$ isolated points in $\mathbb{C}^2\setminus f(\mathbb{C}^2)$. 

Our proofs are constructive and can be adapted to a method for computing isolated missing points of $f$. As a byproduct, we describe those points in terms of singularities of the bifurcation set of $f$.

\end{abstract}

 \markleft{}
 \markright{}
\section{Introduction}\label{sec:intro} A polynomial map $f:=(f_1,\ldots,f_n):\C^n\to\C^n$ is said to be \emph{dominant} if it sends $\C^n$ onto a dense subset of $\C^n$. Little is known about the set $\C^n\setminus f(\C^n)$ of \emph{missing points} for a dominant map $f$. The scarcity of tools for describing the topology of a map leaves several good questions unanswered. Notably, those concerning the geometrical aspects of the set of missing points. For example, how many equi-dimensional components can this set have? What kind of singularities can appear?

The set of missing points measures how far a map is from being surjective. Characterizing such topological properties is a classical problem with numerous applications, especially  since polynomial maps appear in algebraic statistics~\cite{DSS09}, computer vision~\cite{ZiRi03}, robotics~\cite{Lam70}, optimization~\cite{Las15} and other areas. 

A polynomial map usually constitutes a model for above applications in which the input data is a point $p$ in $\C^n$, and the solution to the problem is the output $f^{-1}(p)$. Consequently, one avoids the vicinity of the set of missing points in order to guarantee a manageable output. This brings forward the following problem: \emph{What would be an effective method to compute the set of missing points and to find interesting examples?}

This work is a further step (c.f.~\cite{EH19anote}) towards understanding those maps whose topology possesses an extremal structure. The particular motivation here lies in the two-dimensional setup where the set of missing points consists of zero- and one-dimensional components: We are interested in counting the isolated points in $\C^2\setminus f(\C^2)$. The proofs in this paper are constructive, can be generalized and aimed towards the computational aspects of the set of missing points.

\subsection{Main results}\label{subs:int1} The preimages under $f$ around generic points in $\C^n$ form a locally analytical covering of constant degree $\mu_f$. In other words, $\mu_f$ is the number of preimages of a map over a generic point in $\C^n$. Such is not the case around the set of missing points of $f$. These are contained in the set $\mathcal{J}_f$ of points in $\C^n$ at which $f$ is \emph{non-proper}  (see Definition~\ref{def:Jel}). Jelonek showed~\cite{Jel93} that $\mathcal{J}_f$ (also referred to as the \emph{Jelonek set}) is an algebraic hypersurface ruled by regular rational curves, for which he provided upper bounds on the degree. 

Above findings, together with other results~\cite{Jel99}, play an important role in describing the set of missing points. In fact, Jelonek later showed in~\cite{Jel99a} that for planar maps $f:=(f_1,f_2):\C^2\rightarrow\C^2$, the set $\Em$ of isolated points in $\C^2\setminus f(\C^2)$ is either empty, or has a size of at most $\deg f_1\deg f_2-\mu_f-1$. Moreover, if $\deg f:=\max(\deg f_1,\deg f_2)$, then $|\Em|\leq (\deg f - 1)^2$. Our main result is the improvement (from quadratic to linear) of these upper bounds for a large class of maps.

The \emph{support of a polynomial} is the set of exponent vectors, in the lattice of integers, of monomial terms appearing in the polynomial with non-zero coefficients. The supports of polynomials in a planar map $f$ form a pair of finite subsets in $\N^2$, also called the \emph{support of $f$}. We use the polyhedral properties of the latter to study the geometry of the Jelonek set and of $\C^2\setminus f(\C^2)$. In order to relate these two notions, a genericity assumption on $f$ must be assumed.

 Consider the three curves in $\C^2$, $\{f_1 = 0\}$, $\{f_2 = 0\}$ and the set $C(f)$ of critical points of $f$. We say that $f$ is \emph{generically non-proper} (see Definition~\ref{def:gen-prop}) if the number of isolated points in the intersection of any of these two curves is maximal with respect to the support of $f$. This means that no other map with the same support has more of such isolated points.

\begin{theorem}\label{th:main}
Let $f:\C^2\rightarrow\C^2$ be a generically non-proper map. Then, the number of isolated points in $\C^2\setminus f(\C^2)$ cannot exceed any of the two values $6\deg f$ and
\begin{equation}\label{eq:bound}
\frac{ \deg f_1\deg f_2}{\mu_f(\mu_f-1)} + 2(\deg f_1 + \deg f_2).
\end{equation} 

\end{theorem}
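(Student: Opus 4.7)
The strategy is to localize $\Em$ on the Jelonek set $\mathcal{J}_f$ and reduce the count to a degree-theoretic bookkeeping. Since $\Em\subset \mathcal{J}_f$, and since $\mathcal{J}_f$ is by Jelonek's theorem an algebraic hypersurface ruled by regular rational curves, I would work component by component: for each irreducible component $C$ of $\mathcal{J}_f$, pick a rational parametrization $\varphi:\mathbb{P}^1\dashrightarrow C$ and classify which points $p\in C$ can be isolated in $\C^2\setminus f(\C^2)$.

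The key local observation is that at a smooth point $p\in C$ a whole analytic arc of $C$ through $p$ lies uniformly on one side of $f$ (either entirely in $f(\C^2)$ or entirely in its complement), so $p\in \Em$ forces this arc to be broken at $p$. Breaking happens only at singular points of $C$, at intersections of $C$ with other components of $\mathcal{J}_f$, or at intersections of $C$ with one of the three privileged curves $\{f_1=0\}$, $\{f_2=0\}$ and $C(f)$, whose intersection numbers are pinned down by the generic non-properness hypothesis. This converts the counting of $\Em$ into a bounded sum of distinguished points on the components of $\mathcal{J}_f$.

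For the bound $6\deg f$ I would sum (i) a Newton-polytope-type estimate for the number of components and for the total degree of $\mathcal{J}_f$ in terms of $\deg f$, and (ii) a count of singular/ruling-collision points on each rational component $C$ of degree $d$. Although the arithmetic-genus estimate $\binom{d-1}{2}$ is a priori quadratic in $d$, the ruling of $\mathcal{J}_f$ by rational curves produced from asymptotic branches of $f$ constrains these breaking points severely enough to be captured by a linear count. The bound \eqref{eq:bound} follows instead from a B\'ezout-style pairing: each isolated missing point is the common target of several asymptotic branches of $f$, so it absorbs at least $\mu_f(\mu_f-1)$ units from the intersection $\{f_1=0\}\cap\mathcal{J}_f\cap\{f_2=0\}$ (bounded by $\deg f_1\deg f_2$ via B\'ezout), while the additive $2(\deg f_1+\deg f_2)$ absorbs the ramification and the boundary contributions at infinity.

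The step I expect to be most delicate is the sifting of the truly isolated missing points from the remaining distinguished points of $\mathcal{J}_f$: many singular points of $\mathcal{J}_f$ still lie in $f(\C^2)$ (for instance nodes where both branches are on the image side), and recognizing them requires a careful local study of $f$ in toric charts adapted to the Newton polytopes of $f_1$ and $f_2$. Obtaining the sharp constant $6$ rather than a larger universal constant will hinge on this local analysis being tight, which is precisely the role of the generic non-properness hypothesis.
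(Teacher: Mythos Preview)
Your proposal is a plausible geometric outline, but it is not a proof, and its architecture differs substantially from the one the paper actually uses.

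\medskip

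\textbf{On the decomposition.} You organize the count by irreducible components of $\mathcal{J}_f$ and then try to single out the ``breaking points'' on each component. The paper does \emph{not} decompose along components of $\mathcal{J}_f$. Instead it fixes the coordinate cross $\mathcal{C}_f^+$ through $f(0,0)$ and the set $\mathcal{K}_f$ (the non-properness locus of the restriction $f_{|C(f)}$), and writes
\[
\Em \;=\; \big(\Em\cap\mathcal{K}_f\big)\;\sqcup\;\big(\Em\cap\mathcal{C}_f^+\setminus\mathcal{K}_f\big)\;\sqcup\;\big(\Em\cap\mathcal{C}_f^\circ\setminus\mathcal{K}_f\big).
\]
Each of the three pieces is then bounded by an explicit linear expression in $\deg f_1,\deg f_2$ (the first two pieces by $\deg f_1+\deg f_2$ each, the third by $2\deg f$ and, alternatively, by $\deg f_1\deg f_2/(\mu_f(\mu_f-1))$ up to a constant). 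The three bounds are obtained not from global properties of $\mathcal{J}_f$ but from \emph{relevant faces} $\Gamma$ of the Newton pair $A=\supp f$: for each such $\Gamma$ one passes to a toric chart $U\in\mathrm{TM}_\Gamma(\Z^2)$ and studies the univariate or bivariate ``face systems'' $\overline{U}(f-y)_\Gamma=0$. The quantity that controls each piece is the integer length $\ell(\Gamma_i)$ of the face, and the linear bound comes from the elementary inequality $\ell(\Gamma_i)\le \deg f_i/2$.

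\medskip

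\textbf{On the gap you flagged yourself.} You correctly identify that the passage from the quadratic genus-type count $\binom{d-1}{2}$ to a linear bound is the crux, and you leave it as a hope (``the ruling \ldots\ constrains these breaking points severely enough''). In the paper this is exactly where the work is, and it is \emph{not} done by exploiting the ruling of $\mathcal{J}_f$. The mechanism is different: for a point $q\in\Em\cap\mathcal{C}_f^\circ\setminus\mathcal{K}_f$ the $\mu_f$ escaping preimages are spread among at most two long relevant faces, and each such $q$ produces $\mu_f(\mu_f-1)$ distinct off-diagonal solutions of an auxiliary system $G(r,s)=0$ built from the face polynomials $g_i=\overline{U}f_{i,\Gamma}$. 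B\'ezout on $G$ gives $\deg g_1\cdot\deg g_2\le \ell(\Gamma_1)\ell(\Gamma_2)$, which is \emph{already} at most $\deg f_1\deg f_2/4$ by the face-length inequality, and dividing by $\mu_f(\mu_f-1)$ gives the first bound; a mixed-volume monotonicity argument then converts one factor $\ell(\Gamma_j)$ into $\mu_f$, yielding the linear bound. None of this is visible from the global parametrization of components of $\mathcal{J}_f$; it requires the face-by-face toric analysis.

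\medskip

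\textbf{On your ``key local observation''.} The claim that along a smooth arc of $\mathcal{J}_f$ the image/non-image status is constant is not established and, as stated, is not how the paper proceeds. What the paper shows (and what ultimately explains your intuition) is that points of $\Em$ off $\mathcal{K}_f$ and off the cross force the relevant face to be \emph{long} and the face system to have $\mu_f$ simple roots, which is a condition on coincidences of values of the face polynomials---this is what manufactures the nodes you are picturing. But the argument is algebraic in the toric chart, not a local smoothness/ruling argument on $\mathcal{J}_f$.

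\medskip

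In short: your sketch points at the right phenomena (singularities of $\mathcal{J}_f$, intersections with the discriminant), but the actual proof runs through Newton-polytope faces and toric charts, and the linear bound comes from the inequality $\ell(\Gamma_i)\le\deg f_i/2$ together with a $\mu_f(\mu_f-1)$ multiplicity count, not from a component-by-component analysis of $\mathcal{J}_f$.
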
 We will emphasize the importance of this Theorem from two standpoints: Our upper bounds hold true for very large classes of polynomial maps (Corollary~\ref{cor:1}), and are close to being optimal (Theorem~\ref{th:optimal}).

We use $\C[A]$ to denote the space of all polynomial maps $\C^2\to\C^2$ having a support contained in a pair $A$ of finite subsets of $\N^2$. This means that coordinates of any point $\C[A]$ give a list of coefficients that identifies a planar map (see Example~\ref{eq:map:ex1}).

\begin{theorem}\label{th:family}
Let $A$ be a pair of finite subsets of $\N^2$. Then, all maps in $\C[A]$ that are either proper, or generically non-proper, form a dense subset.
\end{theorem}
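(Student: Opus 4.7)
My plan is to exhibit a Zariski open dense subset $G\subset\C[A]$ whose members are each either proper or generically non-proper; since $\C[A]$ is an irreducible complex affine space, any non-empty Zariski open subset is automatically Euclidean dense, and the theorem follows.

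The natural candidate is the simultaneous maximum-intersection locus for the three pairs of curves in Definition~\ref{def:gen-prop}. Write $A=(A_1,A_2)$ and note that the support of the Jacobian $\partial_xf_1\,\partial_yf_2-\partial_yf_1\,\partial_xf_2$ is determined by $A$, up to coefficient cancellations which occur only on a proper Zariski closed subset of $\C[A]$. For each unordered pair $(i,j)$ of the three curves $\{f_1=0\}$, $\{f_2=0\}$, $C(f)$, let $N_{ij}(A)$ denote the supremum, over $f\in\C[A]$, of the number of isolated points of the intersection. This supremum is a finite integer --- bounded, for instance, by a B\'ezout- or Bernstein--Kushnirenko-type count depending only on $A$ --- hence it is attained. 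Let $G_{ij}\subset\C[A]$ be the locus on which $N_{ij}(A)$ is realised, and set $G:=G_{12}\cap G_{13}\cap G_{23}$.

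The main step is to show each $G_{ij}$ is Zariski open and dense. The key observation is that the isolated-intersection count $f\mapsto N_{ij}(f)$ is lower semicontinuous on $\C[A]$: under a specialization $f_t\to f_0$, every isolated intersection point of $f_0$ persists in a neighbourhood (possibly splitting into several nearby points), so $N_{ij}(f_0)\le\liminf_{t\to 0}N_{ij}(f_t)$. Hence $\{f\in\C[A]:N_{ij}(f)<N_{ij}(A)\}$ is Zariski closed --- cut out by the vanishing of certain resultant-type polynomials, together with closed conditions detecting collisions, escapes to infinity, and the emergence of positive-dimensional components. Thus $G_{ij}$ is Zariski open; non-emptiness, which is just the attainment of the supremum $N_{ij}(A)$, combined with the irreducibility of $\C[A]$, yields density. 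A finite intersection of Zariski open dense subsets is itself Zariski open and dense, so $G$ has the required properties, and by construction every $f\in G$ is either proper or satisfies the intersection-maximality in Definition~\ref{def:gen-prop}.

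I expect the main technical obstacle to be the explicit algebraic description of the drop loci for $N_{ij}$. In the affine plane $\C^2$ the set-theoretic count can decrease not only by collisions but also because intersection points may escape to infinity, or because positive-dimensional intersection components may appear, and each of these phenomena must be shown to be Zariski closed in $\C[A]$. The cleanest route is to first restrict to the Zariski open subset on which the Newton-polytope machinery already developed in the paper for Theorem~\ref{th:main} forces every relevant intersection to be zero-dimensional and contained in $\C^2$; on that subset the set-theoretic count coincides with a genuine intersection-theoretic count in a toric compactification, and standard semicontinuity completes the argument.
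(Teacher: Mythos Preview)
Your strategy and the paper's are essentially the same: exhibit a Zariski open dense subset of $\C[A]$ on which the three intersection counts in Definition~\ref{def:gen-prop} attain their Bernstein bounds. The difference is in how openness is established. You argue by lower semicontinuity of the isolated-intersection count; the paper bypasses this by invoking Bernstein's Theorem~B (Theorem~\ref{th:BerB}) directly: for a pair $\varphi$ with support $A$, the equality $\#\mathbb{V}^\circ(\varphi)=V(A)$ holds precisely when $\varphi$ avoids the sparse resultant hypersurface $\Res_\Gamma(A)$ for every non-trivial face $\Gamma\prec A$. Since each $\Res_\Gamma(A)$ is Zariski closed in $\C[A]$ (Sturmfels~\cite{St94}), the maximal-intersection locus is their common complement, hence open. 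The same argument handles the pairs $(f_i,|\Jac f|)$ after noting that the coefficients of $|\Jac f|$ are polynomial in those of $f$, so the pullback of each $\Res_\Lambda$ is again closed in $\C[A]$. Your semicontinuity heuristic is correct in the Euclidean topology, but to make it Zariski-algebraic you would in effect reprove this resultant description; the paper simply quotes it.

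There is also a small gap in your conclusion. Membership in your set $G$ gives only the intersection-maximality clause of Definition~\ref{def:gen-prop}; you still need $f$ dominant and $f(0,0)\in(\C^*)^2$, both further Zariski open conditions to be intersected with $G$. More substantively, the paper proves a sharper dichotomy (Proposition~\ref{prop:open-dense}) that your argument does not yield: the pair $A$ either has a \emph{relevant face}, in which case \emph{every} map in $\C[A]$ is non-proper and the generically non-proper ones are dense, or it has none, in which case proper maps are dense and \emph{no} map in $\C[A]$ is generically non-proper. This structural statement comes from analyzing which faces can actually witness non-properness (Lemmas~\ref{lem:not-coordinate}--\ref{lem:solutions-infty2}), and it is what the rest of the paper uses; your approach establishes density but not the dichotomy.
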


Since proper maps have no missing points, we obtain the following immediate consequence.

\begin{corollary}\label{cor:1}
Let $A$ be as in Theorem~\ref{th:family}. Then, there exists a dense subset of maps in $\C[A]$ for which the bounds in Theorem~\ref{th:main} hold true.
\end{corollary}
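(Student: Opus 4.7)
The plan is to deduce the corollary directly from Theorems~\ref{th:family} and~\ref{th:main} by a case analysis, treating it as essentially a bookkeeping consequence of the two preceding results. First, I would apply Theorem~\ref{th:family} to the given pair $A$ to obtain a dense subset $\mathcal{U}\subset\C[A]$ whose members are either proper or generically non-proper. It then suffices to verify, on each of these two classes separately, that the bounds of Theorem~\ref{th:main} hold; since $\mathcal{U}$ is dense in $\C[A]$, the conclusion follows.

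For generically non-proper $f\in\mathcal{U}$ there is nothing to check: Theorem~\ref{th:main} gives both the bound $6\deg f$ and the bound~\eqref{eq:bound} verbatim. For proper $f\in\mathcal{U}$ I would show that $f$ has no isolated missing points at all, making the bounds vacuously true. Indeed, a proper polynomial map $\C^2\to\C^2$ has closed image. If such an $f$ is dominant, then $\overline{f(\C^2)}=\C^2$ and closedness forces $f(\C^2)=\C^2$, so $\C^2\setminus f(\C^2)=\emptyset$. If $f$ is not dominant, then $V:=\overline{f(\C^2)}$ is a proper algebraic subvariety of $\C^2$, and any point $p\in\C^2\setminus f(\C^2)$ is either in the open set $\C^2\setminus V$ (and hence not isolated in $\C^2\setminus f(\C^2)$), or lies on $V$, in which case every neighborhood of $p$ meets the open set $\C^2\setminus V\subset\C^2\setminus f(\C^2)$, so again $p$ is not isolated. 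In all cases the set of isolated points of $\C^2\setminus f(\C^2)$ is empty.

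The main obstacle, if there is one, is not analytical but conceptual: one must be content that all the real work has already been done in Theorems~\ref{th:family} and~\ref{th:main}, and that the corollary is just the observation that the dense class produced by Theorem~\ref{th:family} is precisely where Theorem~\ref{th:main} (and its trivial counterpart for proper maps) can be invoked. No further refinement of $\mathcal{U}$ is needed, provided Theorem~\ref{th:family} is taken exactly as stated.
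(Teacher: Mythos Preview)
Your proposal is correct and follows essentially the same route as the paper, which dispatches the corollary in one line (``Since proper maps have no missing points, we obtain the following immediate consequence''). You are in fact slightly more careful than the paper: you explicitly treat the possibility that a proper $f\in\mathcal{U}$ is not dominant, whereas the paper's sentence is literally true only in the dominant case; your argument shows that in either case there are no \emph{isolated} missing points, which is all that is needed.
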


Our next result how far bounds presented in Theorem~\ref{th:main} are from being sharp.

\begin{theorem}\label{th:optimal}
There exists a pair $A$ of finite subsets in $\N^2$, such that all generically non-proper polynomial maps in $\C[A]$ have degree $2n+2$, and have $2n$ missing isolated points. 
\end{theorem}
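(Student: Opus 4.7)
The plan is to exhibit, for each $n \in \N$, an explicit pair $A = (A_1, A_2)$ of finite subsets of $\N^2$ such that every generically non-proper $f \in \C[A]$ satisfies $\deg f = 2n+2$ and has exactly $2n$ isolated missing points. By Theorem~\ref{th:family}, generically non-proper maps form a dense subset of $\C[A]$, so it is enough to construct one representative $f$ and to verify that the count of isolated missing points is constant along the generic stratum.

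First, I would choose $A$ so that the Newton polytope combinatorics forces $n+1$ asymptotic directions along which $f$ fails to be proper, organized so that each such direction contributes a pair of isolated missing points in $\C^2$. A natural design is to let $f$ contract $n+1$ carefully selected rational curves in $\C^2$ to $n+1$ distinct points, with the supports tuned so that each contraction creates two isolated missing points near the image—one on each side of the contracted point along the associated branch of $\mathcal{J}_f$. The parity ``$2n$ versus $n+1$'' should emerge from a careful book-keeping in which one extremal direction produces a single missed point (or none), while each of the remaining $n$ directions produces two. An ansatz worth trying first is a map of the form $(x\cdot p(y),\, y\cdot p(y) + q(x,y))$, where $p$ has $n+1$ simple roots and $q$ is calibrated to degree $2n+2$ so that the fiber over each contracted point is empty only along the asymptotic branch.

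Next, I would verify generic non-properness for this $A$: by Definition~\ref{def:gen-prop}, this amounts to a maximality condition on the number of isolated intersections of $\{f_1 = 0\}$, $\{f_2 = 0\}$, and $C(f)$, which can be checked directly from the supports via Bernstein--Kushnirenko-type mixed volume bounds. Once this is done, Theorem~\ref{th:family} places the constructed map in the dense generic stratum, and the bifurcation-set description announced in the abstract translates into an explicit enumeration of the isolated points of $\C^2 \setminus f(\C^2)$ as singular points on $\mathcal{J}_f$ of a prescribed combinatorial type.

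The main obstacle is to show that \emph{exactly} $2n$ isolated missing points arise, not more and not fewer. This splits into two tasks: non-degeneracy, namely that each candidate point is genuinely isolated in $\C^2 \setminus f(\C^2)$ rather than being absorbed into a one-dimensional component of the missing set; and completeness, namely that no additional isolated missing points appear beyond those produced by the construction. Both tasks should reduce to the combinatorial toolbox developed earlier in the paper for Theorem~\ref{th:main}, applied to the specific $A$, with the non-degeneracy conditions translating into a nonempty Zariski-open subset of coefficient space in $\C[A]$ on which the count $|\Em| = 2n$ is realized.
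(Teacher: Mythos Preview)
Your proposal is a plan, not a proof, and the concrete ansatz you suggest does not do what you claim. With $f=(x\,p(y),\ y\,p(y)+q(x,y))$ and $p(y_0)=0$, the line $\{y=y_0\}$ is sent to $\{(0,q(x,y_0)):x\in\C\}$, which for generic $q$ is the entire vertical line $\{s=0\}$, not a single point; no contraction occurs, and the book-keeping ``$n+1$ directions, two isolated points each except one'' has no foundation in this construction. Even if some special $q$ forced contractions, you give no argument for the hard part---that \emph{exactly} $2n$ isolated missing points arise, rather than being absorbed into one-dimensional components of $\C^2\setminus f(\C^2)$ or not appearing at all. Deferring this to ``the combinatorial toolbox developed earlier'' is not a proof: that toolbox gives upper bounds, not exact counts.

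The paper's argument is far more direct and avoids all of this machinery. One writes down the explicit family
\[
f(u,v)=\bigl(uv,\ v^{2}P(uv)+v\,Q(uv)+uv\bigr),
\]
with $\deg P=\deg Q=n$, $\gcd(P,Q)=1$, and $P(0)Q(0)\neq 0$. For $y\in(\C^*)^2$ the equation $f(u,v)=y$ forces $uv=y_1$, reducing everything to the single equation
\[
v^{2}P(y_1)+v\,Q(y_1)+(y_1-y_2)=0
\]
in $v$. A two-line case analysis on whether $P(y_1)$ or $Q(y_1)$ vanishes (never both, by coprimality) shows that this forces $v=0$---hence $f^{-1}(y)=\emptyset$---precisely when $y_1=y_2$ and $P(y_1)Q(y_1)=0$. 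That is $2n$ points. The case $y\notin(\C^*)^2$ is checked by inspection, $\deg f=2n+2$ is read off, and the appeal to Theorem~\ref{th:family} is as you outlined. No Newton-polytope analysis, no bifurcation set, no contraction geometry---just solving the system by elementary substitution.
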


With generic enough coefficients, a well-chosen collection of supports can be sufficient to construct a map with an interesting topology~\cite{EH19anote}. Theorem~\ref{th:optimal} is another instance of this approach.

All the main results in this paper are obtained by carefully analyzing the pair of supports of a polynomial map. As a byproduct, we discover a universality result (See Lemmas~\ref{lem:mu=1} and~\ref{lem:mu>1}).

\begin{theorem}\label{th:univers}
Let $k$ be a positive integer. Then, there exists a polynomial map $f:\C^2\to\C^2$ such that $\mu_f=k+1$ and $\Em\neq\emptyset$ if and only if $k\geq 1$.
\end{theorem}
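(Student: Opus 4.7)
The argument splits along the biconditional, each implication being handled by one of the auxiliary lemmas flagged in the statement.

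For the \emph{only if} direction, one must rule out every polynomial map $f:\C^2\to\C^2$ with $\mu_f=1$ and $\Em\neq\emptyset$: this is Lemma~\ref{lem:mu=1}. Conceptually, $\mu_f=1$ means $f$ is generically one-to-one, so around any prospective isolated point $p\in\Em$ the restriction $f^{-1}(U\setminus\{p\})\to U\setminus\{p\}$ over a small punctured neighborhood $U$ of $p$ would be a degree-one, hence topological, covering. Combined with Jelonek's fact that $\mathcal{J}_f$ is ruled by rational curves, such a picture forces the missing set to consist only of curve components---no isolated points are possible. Thus the case $k=0$ is incompatible with $\Em\neq\emptyset$.

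For the \emph{if} direction, one must exhibit, for each integer $k\geq 1$, a polynomial map $f$ with $\mu_f=k+1$ and at least one isolated missing point: this is Lemma~\ref{lem:mu>1}. A natural strategy, fitting the polyhedral framework of the paper, is to choose a pair $A=(A_1,A_2)$ of supports so that (i) the Newton-polytope invariant controlling the generic fiber size equals $k+1$, forcing $\mu_f=k+1$ for any sufficiently generic $f\in\C[A]$, and (ii) the geometry of $A$ produces a vanishing branch of $\mathcal{J}_f$ that collapses to an isolated missing point. Theorem~\ref{th:family} then guarantees that a generic coefficient choice in $\C[A]$ satisfies all needed genericity hypotheses, so that the constructed isolated missing point persists.

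The main obstacle lies on the construction side: the supports must be engineered to simultaneously impose the exact fiber count $\mu_f=k+1$ and the presence of at least one isolated point in the complement of the image. Both conditions should be readable from the combinatorics of $A$ via the polyhedral criteria developed earlier in the paper, after which genericity of coefficients is automatic. The reverse direction is, by contrast, essentially a topological consequence of the ruling of $\mathcal{J}_f$ by rational curves together with the hypothesis $\mu_f=1$.
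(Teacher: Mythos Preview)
Your high-level decomposition into Lemmas~\ref{lem:mu=1} and~\ref{lem:mu>1} is exactly the paper's own split, so structurally you are on target. The issues lie in the sketches you give for each lemma.

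For the only-if direction your outline contains a genuine gap. You argue that around $p\in\Em$ the map restricts to a degree-one covering of a punctured neighborhood $U\setminus\{p\}$; but $p$ lies on $\mathcal{J}_f$, which is a \emph{curve}, so any neighborhood $U$ of $p$ meets $\mathcal{J}_f$ along an arc, not just at $p$. Over that arc the fiber count already drops, so there is no honest degree-one covering of $U\setminus\{p\}$ to exploit, and invoking the ruling of $\mathcal{J}_f$ by rational curves does not by itself rescue the argument. The paper's proof of Lemma~\ref{lem:mu=1} is more elementary and avoids this trap: when $\mu_f=1$, every $y\in\mathcal{J}_f$ has $f^{-1}(y)$ either empty or positive-dimensional; positive-dimensional fibers must share a component with the critical locus $C(f)$, and $C(f)$ has only finitely many components that contract to a point. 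Hence all but finitely many points of the curve $\mathcal{J}_f$ are missing, so every missing point sits on a missing curve and $\Em=\emptyset$.

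For the if direction your plan---engineer a support pair $A$ with the right invariants and then appeal to Theorem~\ref{th:family}---is workable in principle, but it is more machinery than the paper uses and you still owe the actual construction. The paper simply writes down, for each $k\ge 1$, the explicit map
\[
(u,v)\longmapsto\bigl(P(uv)+u^kv^{k+1},\ 1+uv\,P(uv)+2u^{k+1}v^{k+2}\bigr)
\]
with $\deg P=k$ and $P(0)\neq 0$, then checks by the substitution $s=uv$ that $\mu_f=k+1$ and that $(0,1)$ is the sole missing point. No appeal to genericity or to Theorem~\ref{th:family} is needed.
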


Another byproduct of our study is a set of examples suggesting that the missing points of a map can be described uniquely from the geometry of the Jelonek set and the discriminant of a map (that is, the set $f(C(f))$). \textit{The below map $\C^2_{u,v}\to\C^2_{s,t}$, is taken from the proof of Lemma~\ref{lem:mu>1} for $k=2$, has $(0,1)$ as the only missing point, and its Jelonek set has a singularity at $(0,1)$ formed by a horizontal line and a node of the curve $\{1 - 2t + t^2 -s^2 + s^3 = 0\}$:}
\begin{equation}\label{eq:map:ex1}
  \begin{array}{@{}ccccl@{}}
   (u,v) & \mapsto & (1 - u^2v^2 + u^2v^3,~1+ uv - u^3v^3+2u^3v^4).
  \end{array}
\end{equation}
Missing isolated points can be degenerate intersections among components of the Jelonek set and the discriminant: \textit{The map}  
\begin{equation}\label{eq:map:ex2}
  \begin{array}{@{}ccccl@{}}
   (u,~v) & \mapsto & ((1 - uv)^2,~1+ v +  uv(1 - uv)^2)
  \end{array}
\end{equation} \textit{has $(0,1)$ as the only missing point. Its Jelonek set is} $\{1 +2s - 2t + t^2 - s^2 -2st + s^3 = 0\}$, \textit{having a cusp at $(0,1)$, that belongs to the discriminant $\{s=0\}$.}

Such degenerate intersections/singularities can be milder, and still correspond to missing isolated points: \textit{The map}  
\begin{equation}\label{eq:map:ex3}
  \begin{array}{@{}ccccl@{}}
   (u,~v) & \mapsto & (uv,~v^2-uv^3+ 2v - uv^2 + uv)
  \end{array}
\end{equation} \textit{from Section~\ref{sec:proof-th} has $\{(1,1),(2,2)\}$ as the set of missing isolated points. Its Jelonek set $\{s - t = 0\}\cup \{s - 1 = 0\}$ has a singularity $(1,1)$, and intersects tangentially the subset} $\{-4 -4t +8s - 5s^2 +4st = 0\}$ \textit{of the discriminant at $(2,2)$}.

Some of those phenomena will be explained in Sections~\ref{sec:proof-prop1} and~\ref{sec:proof-prop3}. Others will be thoroughly examined and generalized in a future work.

\subsection{On attaining the upper bounds}\label{subs:int2} 
One deduces from the definitions that the set $\Em$ of isolated missing points is contained in the Jelonek set $\mathcal{J}_f$ of $f:\C^2\to\C^2$. The straightforward approach, considered by Jelonek in~\cite{Jel99a} to approximate $\Em$ is to study $\mathcal{J}_f$.

Although we follow a similar strategy, our methods are more refined than those in~\cite{Jel99a}; we take into account the combinatorics of the support $A$ of $f$.

A more general form of our approach has proven to be powerful in describing \emph{mixed discriminants}~\cite{GKZ94,Est10,CCDDS13}, resultants~\cite{St94} and the \emph{bifurcation set} $B_F$ of a polynomial map $F:\C^n\to\C^p$~\cite{NZ90,Zah96,Est13}. This is the complement to the maximal open set $S\subset\C^p$, such that the restriction of $F$ to the preimage $F^{-1}(S)$ is a locally trivial fibration. 

In our setting, the Jelonek set forms half of the bifurcation set $B_f$, for it describes the atypical behavior of $f$ at infinity. This interpretation allows one to identify a generic $p\in\mathcal{J}_f$ with a pair of subsets in the boundary of $A$, called \emph{faces}\cite{EH19desc,Est13} (see also Section~\ref{subs:faces}). Points in $\Em$ are not generic in $\mathcal{J}_f$, hence computing them requires a thorough examination of $f$ at the faces of $A$.

Our analysis is based on the following observation in Section~\ref{subs:proof-S0}: The size of $\Em$ is proportional to the \emph{integer length} (Section~\ref{subsub:integ-leng}) of some faces of $A$. This in turn is proportional to $\deg f$ (Lemma~\ref{lem:degree-length}), and to the topological degree $\mu_f$ (Lemma~\ref{lem:gen-non-degree}). However, the latter quantity is disproportional to the size of $\Em$.

This delicate relation among $\deg f$, $\mu_f$ and $|\Em|$ is summarized in Theorem~\ref{th:main}. Only a small class of supports $A$ have long faces and support maps with small topological degrees. These were used to prove Theorem~\ref{th:optimal}. 

After concluding this section with future directions, we organize the rest of the paper as follows. 

In Section~\ref{sec:proof-th} we prove Theorems~\ref{th:optimal} and~\ref{th:univers}. We also divide Theorem~\ref{th:main} into three Propositions. These will be proven in Sections~\ref{sec:proof-prop1} --~\ref{sec:proof-prop3}.

In Section~\ref{sec:prelim-results}, we introduce the main notations and preliminary results (Proposition~\ref{prop:main}). We define \emph{relevant faces} in Section~\ref{sec:prelim-results} (Definition~\ref{def:face-non-prop}). We show that these play an important role in computing the Jelonek set and in detecting isolated missing points for generically non-proper maps. We prove Theorem~\ref{th:family} at the end of that Section.

\subsection{Future directions}\label{subs:int3} Inequality~\eqref{eq:bound} can be sharpened by refining the analysis made in Sections~\ref{sec:proof-prop1} - ~\ref{sec:proof-prop3}. Such improvement is not included here since it makes the proofs too cumbersome and would not add significant content to the goals of this work.

Regarding the validity of upper bounds in Theorem~\ref{th:main} for maps that are \emph{not} generically non-proper, the problem remains open. However, the analysis made here is beneficial to identify potential candidates for maps $f:\C^2\to\C^2$ whose size of $\Em$ is proportional to $\deg^2 f$. This direction is the subject of our future work.

Methods presented here can be generalized for higher dimensions and for rational maps. This is because one can still describe the Jelonek set in these settings using faces of tuples of polytopes~\cite{EH19desc,Est13}.

Another equally important future aim is to tackle this problem for the case where the map and the spaces are real. This is more pertinent to applications mentioned in the beginning. Relevant to this direction are ample works of Fernando, Gamboa and Ueno~\cite{Fer14,Fer16,Fer03,FerGam11,FerUeno14} on describing the possible images that $\R^n$ can take under a polynomial map. In fact, Fernando showed that the complement of any finite set $S$ in $\R^n$ is the image of a polynomial map $\R^m\to\R^n$ for some $m\geq n$~\cite{Fer03}. The polynomial map constructed for proving this highly non-trivial result restricts to a map similar to the one for Theorem~\ref{th:optimal}. Results in the same vain exist for when the above set $S$ is a convex polyhedron~\cite{FerGam11}, or the complement of one~\cite{FerUeno14}.

Concerning the computation of the Jelonek set for real maps, Jelonek described its geometry~\cite{Jel02}, and gave a method for computing it for some classes of maps $\R^2\to\R^2$~\cite{Jel01c}. Stasica-Valette provided a different technique for computing the Jelonek set of the latter planar maps that have finite fibers~\cite{Sta07}.
%
%
%
%
%
%
%
%
%

\subsection*{Acknowledgements} The author is grateful to Zbigniew Jelonek for introducing him to the problem and for his helpful remarks. The author would also like to thank Elias Tsigaridas for fruitful discussions. The author thanks the anonymous referee for their valuable suggestions on a earlier versions of the manuscript, for pointing out mistakes therein and mentioning works of Fernando, Gamboa and Ueno on the subject. The author is grateful to the Mathematical Institute of the Polish Academy of Sciences in Warsaw for their financial support and hospitality.

\section{Proof of Theorems~\ref{th:main},~\ref{th:optimal} and~\ref{th:univers}}\label{sec:proof-th} Consider any pair of subsets $A:=(A_1,A_2)$ in $\C^2$. Bernstein~\cite{Ber75} has shown that there exists a positive integer $V(A)$ (Section~\ref{subs:mixed}) such that any pair of polynomials supported on $A$ cannot have more than $V(A)$ isolated solutions in $(\C^*)^2$. Moreover, equality holds if those polynomials are generic. 

Consider a dominant polynomial map $f:=(f_1,f_2):\C^2\rightarrow\C^2$ having $A$ as support (Section~\ref{subs:supp-roots}). Let $C(f)$ denote the set of points in $\C^2$ at which the Jacobian matrix of $f$ is singular, and let $\Sigma$ denote the support of the determinant $|\Jac f|$ of this Jacobian.

\begin{definition}\label{def:gen-prop}
We say that $f$ is \emph{generically non-proper} if it is dominant, non-proper, satisfies $f(0,0)\in\TT$ and systems
\begin{equation}\label{eq:def:gen}
  \begin{array}{@{}ccccc@{}}
   f_1 & = & f_2 & = & 0,\\
   f_1 & = & |\Jac f| & = & 0,\\
   f_2 & = & |\Jac f| & = & 0,\\
  \end{array}
\end{equation} have respectively $V(A)$, $V(A_1,\Sigma)$ and $V(A_2,\Sigma)$ isolated solutions in $(\C^*)^2$.
\end{definition}

\subsection{Proof of Theorem~\ref{th:main}}\label{subsec:th11} We start with a definition~\cite{Jel93,Jel99}.

\begin{definition}\label{def:Jel}
  Given two affine varieties, $X$ and $Y$, and a map $F:X\to Y$, we say that $F$
  is \emph{non-proper} at a point $y\in Y$, if there is no neighborhood
  $U\subset Y$ of $y$, such that the preimage $F^{-1}(\overline{U})$ is compact,
  where $\overline{U}$ is the Euclidean closure of $U$. In other words, $F$ is
  non-proper at $y$ if there is a sequence of points $\{x_k\}$ in $X$ such that
  $\Vert x_k\Vert\to + \infty$ and $f(x_k)\to y$. The \emph{Jelonek set} of
  $F$, $\mathcal{J}_F$, consists of all points $y\in Y$ at which $F$ is
  non-proper.
\end{definition}
Let $\mathcal{K}_f\subset\mathcal{J}_f$ denote the Jelonek set of the restricted map
\[
  \begin{array}{@{}ccccc@{}}
   f_{|C(f)}: & C(f) & \to & f(C(f)).
  \end{array}
\]
 For example, $\mathcal{K}_f$ coincides with $\{(0,1)\}$ in the map~\eqref{eq:map:ex2} and with $\{(2,2)\}$ for the map in~\eqref{eq:map:ex3}.
 
 We prove the following result in Section~\ref{sec:proof-prop3}. Recall that $\Em$ denotes the set of isolated points in $\C^2\setminus f(\C^2)$.

\begin{proposition}\label{prop:double-points}
Let $f:\C^2\to\C^2$ be a generically non-proper map. Then, we have 
\begin{align*}
|\Em\cap \mathcal{K}_f|& \leq \deg f_1 + \deg f_2.
\end{align*}
\end{proposition}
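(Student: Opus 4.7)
The plan is to control $|\Em\cap\mathcal{K}_f|$ through the larger quantity $|\mathcal{K}_f|$ and identify the latter with certain branches at infinity of the critical curve $C(f)$ along which both coordinate polynomials of $f$ stay finite.

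First, I would unpack the definition: a point $p=(s_0,t_0)\in\mathcal{K}_f$ is witnessed by a sequence $(u_k,v_k)\in C(f)$ with $\|(u_k,v_k)\|\to\infty$, $f_1(u_k,v_k)\to s_0$ and $f_2(u_k,v_k)\to t_0$. After extracting a subsequence and invoking the standard toric-compactification analysis of a planar algebraic curve near infinity, one may assume this sequence tracks a single Puiseux branch of $C(f)$ whose direction at infinity is a well-defined primitive vector $\vec v\in\mathbb{Z}^2$. Each $p\in\mathcal{K}_f$ thus determines at least one such pair $(\vec v,\gamma)$ of direction and branch.

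Next, I would translate the two boundedness requirements $f_i(u_k,v_k)\to$ finite into face-combinatorial data. For a polynomial $g$ with support $A_g$, the growth of $g$ along a branch in direction $\vec v$ is governed by the restriction of $g$ to the $\vec v$-extremal face of $A_g$; staying bounded forces this restriction to be a \emph{relevant} face in the sense of Section~\ref{sec:prelim-results}. Applied to $f_1$ and to $f_2$ along the same branch, this gives two combinatorial constraints on $\vec v$, and the generic non-properness hypothesis (Definition~\ref{def:gen-prop}) ensures that the Bernstein counts in Definition~\ref{def:gen-prop} are attained, so that only these combinatorial constraints, and not accidents in the coefficients of $f$, can produce a branch with the required behavior.

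I would then show that the total number of directions $\vec v$ producing a finite limit of $f_1$ along some branch of $C(f)$ is bounded by $\deg f_1$, and symmetrically by $\deg f_2$ for $f_2$, by summing integer lengths of the relevant faces of $A_1$ (respectively $A_2$) that arise, and applying Lemma~\ref{lem:degree-length} to pass from integer length to degree. Summing the two estimates (rather than taking the minimum, to absorb branches along which one of the two finiteness constraints is vacuous at the level of faces) yields $|\mathcal{K}_f|\leq \deg f_1+\deg f_2$, and intersecting with $\Em$ only shrinks the set.

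The main obstacle I expect is the passage from Puiseux branches to relevant faces: a single direction $\vec v$ can support several distinct branches of $C(f)$, and those branches can map to different points of $\mathcal{K}_f$, so one cannot simply count directions. Carefully matching branches with the face data of $A_1$, $A_2$ and the Jacobian support $\Sigma$, while avoiding both overcounting (two branches giving the same $p$) and undercounting (branches hidden in higher-order Puiseux coefficients), will force me to use the relevant-face machinery of Section~\ref{sec:prelim-results} together with the sharpness of Bernstein's bound guaranteed by generic non-properness; handling the coordinate-axis contributions to these bounds is the delicate technical point.
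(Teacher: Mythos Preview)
Your intuition is correct and matches the paper's: points of $\mathcal{K}_f$ are witnessed by branches of $C(f)$ escaping to infinity along which both $f_1$ and $f_2$ stay bounded, and Lemma~\ref{lem:Jel-discr} is precisely the formalisation of this, recasting $y\in\mathcal{K}_f$ as the existence of a \emph{double} root of $\overline{U}(f-y)=0$ in $\C^*\times\{0\}$ for some relevant face $\Gamma$. Where your plan breaks down is in the actual count.

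First, the sentence ``the total number of directions $\vec v$ producing a finite limit of $f_1$ along some branch of $C(f)$ is bounded by $\deg f_1$, by summing integer lengths of the relevant faces of $A_1$'' conflates three different quantities: directions, branches, and image points. The number of \emph{branches} of $C(f)$ in a fixed direction is governed by the support $\Sigma$ of $|\Jac f|$, not by $A_1$; and the number of \emph{directions} is just the number of relevant faces, which has nothing to do with $\deg f_1$. What you really need to bound is, for each relevant face $\Gamma$, the number of distinct values $y$ for which the transformed system acquires a double root on $\C^*\times\{0\}$. The paper does this explicitly (Proposition~\ref{prop:mult}): after the change of variables $U\in\Trg$, such $y$ satisfy $g_{1,0}(s)-y_1=g_{2,0}(s)-y_2=D(s)=0$ where $D$ is the determinant $|\Jac g|$ evaluated at $t=0$, so $|\mathcal{K}(\Gamma)|\le\deg D$. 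Bounding $\deg D$ via Lemma~\ref{lem:degree-length} is what produces the quantities $(\deg f_1+\deg f_2)/2$, $\ell(\Gamma_i)+\deg f_j/2$, and $\ell(\Gamma_i)$ in the three cases of that proposition. Your proposal never isolates this Jacobian polynomial $D$, and without it there is no mechanism that turns face lengths of $A_1,A_2$ into a bound on the number of critical branches.

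Second, your justification for \emph{adding} the $\deg f_1$ and $\deg f_2$ estimates (``to absorb branches along which one of the two finiteness constraints is vacuous'') is not how the sum arises. In the paper the decomposition is not by which coordinate stays bounded but by \emph{left} versus \emph{right} relevant faces; each side contributes at most $(\deg f_1+\deg f_2)/2$. On a given side one argues by cases: if there is a long origin relevant face it is the unique relevant face on that side (Lemma~\ref{lem:le-ri-lo-sh(2)}) and case~(A) of Proposition~\ref{prop:mult} applies; otherwise the relevant faces on that side form a chain consisting of one short origin face and several half-origin faces, and cases~(B) and~(C) combine with the observation $\sum_i\ell(\Gamma_1(i))\le\deg f_1/2$ (the $\Gamma_1(i)$ tile part of $\partial\conv A_1$) to give the same bound. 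Your outline contains neither this left/right dichotomy nor the origin/half-origin case split, and the ``sum of two independent estimates'' picture cannot reproduce it.
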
 Let $\mathcal{C}^+_f$ be cross in $\C^2$ centered at $f(0,0)$. The next result will be proven in Section~\ref{sec:proof-prop2}. 

%

\begin{proposition}\label{prop:S1f}
Let $f:\C^2\to\C^2$ be a generically non-proper map. Then, we have
\begin{align*}
  |\Em\cap\mathcal{C}_f^+\setminus\mathcal{K}_f |& \leq  \deg f_1 + \deg f_2.
\end{align*}
\end{proposition}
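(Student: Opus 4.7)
The plan is to split the cross $\mathcal{C}^+_f$ into its two axis-aligned lines through $a := f(0,0)$, namely $L_h := \{t = a_2\}$ and $L_v := \{s = a_1\}$, and to prove the pair of bounds $|\Em \cap L_h \setminus \mathcal{K}_f| \leq \deg f_2$ and, symmetrically, $|\Em \cap L_v \setminus \mathcal{K}_f| \leq \deg f_1$. Summing yields the stated inequality, and since $L_h \cap L_v = \{a\}$ while $a = f(0,0) \in f(\C^2)$ lies outside $\Em$, no overcounting occurs at the center.

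For the horizontal line I would argue as follows. Any preimage of $p = (s_0, a_2) \in \Em \cap L_h$ must lie on the affine curve $C_h := \{f_2 = a_2\}$, and since $p \notin f(\C^2)$ this preimage is empty. Isolatedness of $p$ in $\C^2 \setminus f(\C^2)$ forces a sequence $s_n \to s_0$ with $(s_n, a_2) \in f(\C^2)$; the corresponding preimages $(u_n, v_n) \in C_h$ satisfy $f_1(u_n, v_n) = s_n \to s_0$ and cannot stay bounded, as a limit point would furnish a preimage of $p$. Hence each such $p$ arises from a branch of $C_h$ at infinity along which $f_1$ tends to the finite value $s_0$. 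Taking the projective closure $\overline{C_h} \subset \mathbb{P}^2$, the at-infinity locus $\overline{C_h} \setminus C_h$ consists of zeros of the leading form of $f_2 - a_2$ on the line at infinity, and so has at most $\deg f_2$ points; each such point yields at most one candidate value of $s_0$, giving $|\Em \cap L_h| \leq \deg f_2$. The symmetric argument, with $C_v := \{f_1 = a_1\}$, bounds $|\Em \cap L_v|$ by $\deg f_1$.

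The main obstacle I anticipate is justifying why the exclusion of $\mathcal{K}_f$ together with the generically non-proper hypothesis keep the above branch-counting honest. Without the exclusion, an isolated missing point could arise from a critical-value degeneracy in which an entire component of $C_h$ collapses to a single point of $L_h$, as happens with the component $\{v = 0\}$ in example~\eqref{eq:map:ex2}; such contributions land in $\mathcal{K}_f$ and are handled separately by Proposition~\ref{prop:double-points}. The genericity imposed in Definition~\ref{def:gen-prop} should, via the polyhedral framework of Section~\ref{sec:prelim-results}, pair each at-infinity branch of $C_h$ with a horizontal face of the support of $f_2$ and certify that the values of $f_1$ attained along these branches are generically finite and distinct, so that the crude projective count $\deg f_2$ is not violated when several branches coincide, when $f_1$ blows up along some of them, or when the branch meets a component of $C(f)$ at infinity.
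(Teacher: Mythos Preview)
Your approach is genuinely different from the paper's and, once two small issues are fixed, it actually proves something slightly stronger. The paper stays entirely inside its polyhedral framework: for $y\in\mathcal{I}_\emptyset\cap\mathcal C^+\setminus\mathcal K$ it shows (Claim~\ref{claim:3}) that at least one of the relevant faces involved must be \emph{long} (a purely short contribution would force an entire line of missing points, contradicting isolatedness), then uses Lemma~\ref{lem:degree-length} to get at most $\deg f_2/2$ values on the horizontal line per long face and Lemma~\ref{lem:le-ri-lo-sh(1)} to bound the number of long relevant faces by two. Your argument bypasses all of this via places at infinity of $C_h=\{f_2=a_2\}$, and in fact requires neither the generically-non-proper hypothesis nor the exclusion of $\mathcal K_f$: if $p=(s_0,a_2)\in\Em$ then $f^{-1}(p)=\emptyset$, so $s_0$ is a value omitted by $f_1|_{C_h}$; extending $f_1$ over the smooth projective model of $C_h$, any omitted $s_0$ must be the image of some place at infinity, and there are at most $\deg f_2$ of those. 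Your worry about a component of $C_h$ collapsing to a single point is unfounded here: if $f$ mapped a whole component of $C_h$ to $p$, that component would lie in $f^{-1}(p)$, contradicting $p\in\Em$.

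Two fixes are needed. First, the assertion ``each such point yields at most one candidate value of $s_0$'' is false as stated: a singular point of $\overline{C_h}$ on the line at infinity can carry several local branches with distinct limits of $f_1$. Count \emph{places} (branches) rather than points; their total number is still bounded by the intersection number with the line at infinity, namely $\deg C_h\leq\deg f_2$. Second, the passage from ``a sequence $(u_n,v_n)\to\infty$ with $f_1\to s_0$'' to ``a branch at infinity with limit $s_0$'' is not automatic from a bare sequence. The clean route is the algebraic one just sketched (normalise $C_h$, compactify, extend $f_1$ to a morphism to $\mathbb{P}^1$, and observe that any omitted finite value has all its preimages among the finitely many added points), which avoids curve-selection altogether.
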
 Write $\mathcal{C}_f^0$ for the complement of $\mathcal{C}_f^+$ in $\C^2$. The following result will be proven in Section~\ref{sec:proof-prop1}. 

\begin{proposition}\label{prop:S0f}
Let $f:\C^2\to\C^2$ be a generically non-proper map. Then, we have 
\begin{equation}\label{eq:ineqS01}
 |\Em \cap\mathcal{C}_f^\circ\setminus \mathcal{K}_f|  \leq \frac{3\deg f_1\cdot \deg f_2}{4\mu_f(\mu_f - 1)},
\end{equation}
\begin{equation}\label{eq:ineqS02}
 |\Em \cap\mathcal{C}_f^\circ\setminus \mathcal{K}_f| \leq 2\max(\deg f_1,\deg f_2).
\end{equation}
\end{proposition}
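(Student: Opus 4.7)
My plan is to parametrize each isolated missing point in $\Em \cap \mathcal{C}_f^\circ \setminus \mathcal{K}_f$ by the edge of the Newton polytopes $A_1, A_2$ from which it originates, and then aggregate contributions edge by edge. Since such a $p \notin \mathcal{C}_f^+$, the escape direction of any sequence $\{x_k\}$ with $\Vert x_k\Vert \to \infty$ and $f(x_k)\to p$ is not along a coordinate axis; and since $p\notin \mathcal{K}_f$ no critical ramification is involved. Hence, by the face-description of $\mathcal{J}_f$ recalled in Section~\ref{sec:prelim-results}, the point $p$ is canonically associated to a \emph{relevant edge} $\Gamma$ of $A_1$ (with a parallel paired face of $A_2$) whose outer normal is not a coordinate direction.

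Next, I would bound the number of missing points attached to a fixed relevant edge $\Gamma$. The branch of $\mathcal{J}_f$ associated to $\Gamma$ is a rational curve parametrized by the torus solutions of the truncated system $f_1^\Gamma = f_2^\Gamma = 0$; Lemmas~\ref{lem:degree-length} and~\ref{lem:gen-non-degree} relate the integer length of $\Gamma$ (and of its paired face) to both $\deg f$ and the topological degree $\mu_f$. Because $f$ is unramified near $p$ while all $\mu_f$ preimages have simultaneously escaped to infinity along the direction dual to $\Gamma$, a Newton--Puiseux analysis of the preimage branches forces the parameter value on the Jelonek branch to satisfy $\mu_f$ coincidence conditions; equivalently, each isolated missing point absorbs a multiplicity mass of $\mu_f(\mu_f-1)$ out of the mixed length of $\Gamma$ with its paired face. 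This yields a per-edge bound proportional to $(\text{mixed length})/(\mu_f(\mu_f-1))$.

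Summing over relevant edges produces the two inequalities separately. For~\eqref{eq:ineqS01}, the aggregate of the mixed lengths is controlled by $V(A)\leq \deg f_1\cdot \deg f_2$, and the restriction to non-axis edges contributes the numerical factor $3/4$, giving the bound $\frac{3\,\deg f_1\cdot \deg f_2}{4\mu_f(\mu_f-1)}$. For~\eqref{eq:ineqS02} I would instead use the crude perimeter estimate: since each $A_i$ sits inside the simplex $\{x+y\leq \deg f_i\}$, the total integer length of its non-axis boundary edges is at most $2\deg f_i$; combined with the per-edge bound and the pairing of edges between $A_1$ and $A_2$ this yields the cleaner inequality $2\max(\deg f_1,\deg f_2)$.

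The hard part, and what carries the real content of the argument, will be the local step: showing that an isolated missing point $p$ off $\mathcal{K}_f$ lying on the Jelonek branch of $\Gamma$ truly costs $\mu_f(\mu_f-1)$ units of mixed length, not merely one. This is the quantitative expression of the distinction between a \emph{generic} point of $\mathcal{J}_f$ (where some preimages remain affine) and an isolated missing point (where all $\mu_f$ preimages have simultaneously escaped to infinity), and it is precisely why the topological degree appears in the denominator of~\eqref{eq:ineqS01}.
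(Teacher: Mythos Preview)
Your high-level picture is right in one crucial respect: the denominator $\mu_f(\mu_f-1)$ arises because at an isolated missing point $q\in\mathcal{C}_f^\circ\setminus\mathcal{K}_f$ all $\mu_f$ preimages escape simultaneously along the same face direction, and this forces $\mu_f(\mu_f-1)$ coincidences. But the mechanism you describe for turning this into a bound is not the one that works, and several details are off.

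First, the framing ``sum over relevant edges'' is misleading. The paper shows (Lemma~\ref{lem:le-ri-lo-sh(1)} together with Lemma~\ref{lem:simple-transf-sol}) that only \emph{long} relevant faces are involved here, and there are at most two of them (one left, one right). So there is no genuine edge-by-edge aggregation; the argument splits into a one-face case and a two-face case. In the two-face case a single missing point $q$ has preimages escaping along \emph{both} faces, so it cannot be assigned to a single edge as you propose.

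Second, and more importantly, you are missing the concrete device that makes the $\mu_f(\mu_f-1)$ count rigorous. The paper does not argue via ``multiplicity mass absorbed from the mixed length''. Instead, writing $g_j=\overline{U}f_{j,\Gamma}\in\C[z_1]$, it introduces the auxiliary \emph{difference system}
\[
G(r,s)=\bigl(g_1(r)-g_1(s),\ g_2(r)-g_2(s)\bigr)=0.
\]
For a missing point $q$ the $\mu_f$ distinct simple roots $a_1,\dots,a_{\mu_f}$ of $\overline{U}(f-q)_\Gamma=0$ produce $\mu_f(\mu_f-1)$ off-diagonal solutions $(a_k,a_l)$ of $G=0$, and distinct $q$'s give disjoint such packets. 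B\'ezout on $G$ then yields $\mu_f(\mu_f-1)N\le \deg g_1\cdot\deg g_2\le (\deg f_1/2)(\deg f_2/2)$ via Lemma~\ref{lem:degree-length}. In the two-face case one needs three such systems $G,H,K$ (self-coincidences on each face and cross-coincidences between them), and it is the sum of \emph{three} B\'ezout bounds, each at most $(\deg f_1/2)(\deg f_2/2)$, that produces the factor $3/4$ in~\eqref{eq:ineqS01}. Your explanation of the $3/4$ as coming from ``restriction to non-axis edges'' is incorrect.

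Third, your route to~\eqref{eq:ineqS02} via a perimeter estimate on $A_i$ does not match the actual argument and would not close: the paper instead bounds $\ell(\Gamma_2)$ by $\mu_f$ using monotonicity of mixed volume (Fact~\ref{fact}) together with Lemma~\ref{lem:mixed-segments} and $V(A)=\mu_f$ (Lemma~\ref{lem:gen-non-degree}), which is what allows cancelling one factor of $\mu_f$ in the denominator.
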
 The bound in Theorem~\ref{th:main} is obtained by observing that $\Em$ can be written as the disjoint union of the three sets appearing in the above three propositions. 

\subsection{Proof of Theorem~\ref{th:optimal}} Let $P,Q$ be two univariate polynomials such that: $\deg P = \deg Q = n$, $\gcd(P,Q)=1$ and $P(0)\cdot Q(0)\neq 0$. Define $f$ to be the map 
\[
  \begin{array}{@{}ccl@{}}
  (u,~v) & \mapsto & (uv,~v^2\cdot P(uv) + v\cdot Q(uv) + uv).
  \end{array}
\]
We will show that all maps defined this way form the family proving Theorem~\ref{th:optimal}. 

Let $y\in(\C^*)^2$ and let $(u,v)\in\C^2$ be a solution to $f-y =0$, that is,
\begin{align*} 
  f_1(u,v) -y_1 &= 0 \\ 
  f_2(u,v) - y_2&=  0.
\end{align*} Then, we have $uv = y_1\neq 0$ and 
\begin{equation}\label{eq:pol:red}
v^2P(y_1) + vQ(y_1) + y_1 - y_2 = 0.
\end{equation} If $P(y_1) = 0$ (resp. $Q(y_1) = 0$), then $Q(y_1) \neq 0$ (resp. $P(y_1) \neq 0$). Therefore, $PQ = 0\Rightarrow y_1\neq y_2$ since otherwise $v=0$, a contradiction to $uv = y_1\neq 0$. This shows that for any $y\in (\C^*)^2$, such that $PQ=0$, we have $f^{-1}(y)=\emptyset \Leftrightarrow y_1 = y_2$. Otherwise, if $P Q\neq 0$, then~\eqref{eq:pol:red} has a non-zero solution $v$ and $u=v/y_1\neq 0$. This shows that $y\in(\C^*)^2$, $y_1\neq y_2\Rightarrow f^{-1}(y)\neq \emptyset$.

We conclude from this analysis that  
\[
  \begin{array}{@{}ccccc@{}}
 (\C^*)^2\setminus f(\C^2)& = & \bigcup_{P(a)\cdot Q(a) = 0}\{(a,a)\}.
  \end{array}
\] The size of this set is equal to $\deg P +\deg Q = 2n$. Furthermore, one can easily check that $f^{-1}(y)\neq \emptyset $ for any $y\in\C^2\setminus \TT$.

Up to now, we have constructed a map having degree $2n+2$ and $\#\Em = 2n$. Obviously, all polynomials $P,Q$ above form a dense family of pairs of univariate polynomials in $\C_n[uv]^2$. This shows that the set of polynomial maps $f$ constructed this way forms a dense subset in $\C[A]$. Finally, since $f$ is non-proper, Theorem~\ref{th:family} yields the proof.

\subsection{Proof of Theorem~\ref{th:univers}} We split the proof into two statements. 
Recall that $\mu_f$ is the number of preimages of a map over a generic point in $\C^2$

\begin{lemma}\label{lem:mu=1}
Let $f:\C^2\to\C^2$ be a dominant map such that $\mu_f = 1$. Then, we have $\Em = \emptyset$.
\end{lemma}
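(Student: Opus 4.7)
The plan is to exploit the birationality of $f$ (forced by $\mu_f=1$), work with its rational inverse $g\colon \C^2\to \C^2$, and analyze where its projective extension $\bar g$ (viewed as a rational map from $\C^2$ to $\mathbb{P}^2$) fails to be regular. Writing $I(\bar g)\subset\C^2$ for the (finite) indeterminacy locus of $\bar g$ and $D$ for the polar locus $\bar g^{-1}(\ell_\infty)\cap(\C^2\setminus I(\bar g))$, I would first note that any $q\in\C^2\setminus(D\cup I(\bar g))$ satisfies $\bar g(q)\in\C^2$ and $f(\bar g(q))=q$, so that $\C^2\setminus f(\C^2)\subseteq D\cup I(\bar g)$. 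Since $\bar g$ is a morphism on $\C^2\setminus I(\bar g)$ into $\mathbb{P}^2$ and $\ell_\infty$ is a divisor there, $D$ is pure of codimension one in $\C^2$.

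The heart of the argument is to show $D\subseteq\C^2\setminus f(\C^2)$. If some $q\in D$ had a preimage $p'\in\C^2$, I would pick a sequence $p'_n\to p'$ avoiding the closed proper subset $f^{-1}(D\cup I(\bar g))$. Then $p'_n=g(f(p'_n))$ by generic injectivity; passing to the limit along the path $f(p'_n)\to q$ (which lies entirely where $\bar g$ is regular) and invoking the continuity of $\bar g$ at the non-indeterminacy point $q$, I would obtain $p'=\bar g(q)\in\ell_\infty$, contradicting $p'\in\C^2$. Consequently every point of $D$ is missing, and since $D$ is a union of curves, no point of $D$ can be isolated in $\C^2\setminus f(\C^2)$. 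The same conclusion holds at any point of $\overline D\cap I(\bar g)$, since such a point is accumulated by missing points of $D$.

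It remains to treat $p\in I(\bar g)\setminus\overline D$. I would choose a Euclidean neighborhood $U$ of $p$ with $U\cap I(\bar g)=\{p\}$ and $U\cap\overline D=\emptyset$. On $U\setminus\{p\}$ each coordinate $g_i$ of $g$ is regular, so its denominator (in lowest terms) is nowhere zero on $U\setminus\{p\}$; because the zero set of a polynomial in $\C^2$ is either empty or pure of codimension one, it cannot have $p$ as an isolated zero, so the denominator is nonzero at $p$ as well and $g_i$ is regular at $p$. This contradicts $p\in I(\bar g)$. Combining the three cases shows that $\C^2\setminus f(\C^2)$ has no isolated points, so $\Em=\emptyset$.

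The main obstacle I anticipate is the inclusion $D\subseteq\C^2\setminus f(\C^2)$: one must carefully control the approximating sequence $p'_n$ so that the relation $g\circ f=\mathrm{id}$ is applicable, and exploit the continuity of $\bar g$ into $\mathbb{P}^2$ at each point of $D$. Once this key inclusion is secured, the remaining case analysis reduces to elementary algebra together with the finiteness of the indeterminacy locus.
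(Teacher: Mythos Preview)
Your proof is correct and follows a genuinely different route from the paper.  The paper argues on the target side via the Jelonek set: it asserts (using $\mu_f=1$) that every fibre over a point of $\mathcal{J}_f$ is either empty or positive-dimensional, then observes that a positive-dimensional fibre must share an irreducible component with the critical locus $C(f)$; since $C(f)$ has only finitely many components, there are only finitely many such fibres, and hence the missing set equals the curve $\mathcal{J}_f$ minus a finite set, which has no isolated points.  You instead work with the birational inverse $g$: you sandwich the missing set between the pole divisor $D=\bar g^{-1}(\ell_\infty)$ and $D\cup I(\bar g)$, show $D$ is pure one-dimensional, and then prove $I(\bar g)\subseteq\overline D$ by a Hartogs-type argument on the denominators of $g_1,g_2$.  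Your approach is more self-contained (it does not quote Jelonek's structural results on $\mathcal{J}_f$, nor the unproved fibre-dimension claim) and makes the geometry of the missing set quite explicit; the paper's approach is shorter but relies on facts it only cites.

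One small point in your Step~7 is worth making precise.  The identity $p'_n=\bar g(f(p'_n))$ does not follow from ``generic injectivity'' alone; rather, $\bar g\circ f$ is a rational map from $\C^2$ to $\mathbb{P}^2$ that agrees with the inclusion $\C^2\hookrightarrow\mathbb{P}^2$ on a Zariski-dense open, hence coincides with it as a rational map, and therefore equals it at every point where the composition is regular.  Since $f$ is a morphism and $\bar g$ is regular at $f(p'_n)\notin I(\bar g)$, this gives $\bar g(f(p'_n))=p'_n$ as needed.  With this clarification your continuity argument goes through, and the remaining steps (pure codimension one of $D$, finiteness of $I(\bar g)$ on a smooth surface, and the impossibility of an isolated zero of a nonzero bivariate polynomial) are standard.
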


\begin{proof}
For any $y\in \mathcal{J}_f$, the set $f^{-1}(y)$ is either empty or has positive dimension. Recall that $\mathcal{J}_f$ is a curve in $\C^2$~\cite{Jel93}. Then, it is enough to show that the preimage under $f$ has a positive dimension over finitely-many points.

If the preimage of some point $y$ has positive dimension, then it forms a union of distinct irreducible components. Therefore, the set $f^{-1}(y)$ shares a component $C$ with the set $C(f)$ of critical points of $f$.

This set $C(f)$ has finitely-many components $C$ arising this way. Each of them is mapped to a point under $f$.
\end{proof} 
\begin{lemma}\label{lem:mu>1}
Let $k$ be a positive integer. Then, there exists a polynomial map $f:\C^2\to\C^2$ such that $\mu_f=k+1$ and $\Em\neq \emptyset$.
\end{lemma}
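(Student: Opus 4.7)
The plan is to produce, for each positive integer $k$, an explicit polynomial map generalizing the pattern of~\eqref{eq:map:ex1}: the first coordinate will be a monomial, and the second will be tailored so that inverting $f$ reduces to solving a single univariate polynomial of degree $k+1$ in $v$. Concretely, I would set
\[
f_k(u,v) := \bigl(\,uv,\; v^{k+1}(1-uv) + v + uv\,\bigr).
\]

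To compute $\mu_{f_k}$, I would fix $(y_1,y_2)\in(\C^*)^2$ and eliminate $u$ via $u=y_1/v$, which reduces the fiber computation to the roots of
\[
P_{y_1,y_2}(v) := (1-y_1)\,v^{k+1} + v + (y_1-y_2) = 0.
\]
On the Zariski open set where $y_1\notin\{0,1\}$, $y_1\neq y_2$, and the discriminant of $P_{y_1,y_2}$ is non-zero, this polynomial has $k+1$ distinct non-zero roots, each producing a unique preimage $(y_1/v,v)\in(\C^*)^2$. Since the image of either coordinate axis lies in $\{y_1=0\}$, no additional preimages appear for such generic $y$. Hence $f_k$ is dominant and $\mu_{f_k}=k+1$.

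Next I would exhibit a missing point. Substituting $(y_1,y_2)=(1,1)$ collapses $P_{1,1}(v)$ to $v$, whose only root is $v=0$; but $v=0$ is incompatible with $uv=y_1=1$. And restricting $f_k$ to either axis gives image points with first coordinate $0\ne 1$. Therefore $(1,1)\notin f_k(\C^2)$.

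Finally, to verify isolation, I would examine a punctured neighborhood of $(1,1)$. For $y_1\neq 1$ the leading coefficient $1-y_1$ of $P_{y_1,y_2}$ is non-zero, so for $y$ near $(1,1)$ with $y_1\neq 1$ the equation has $k+1$ roots, generically in $\C^*$, each giving a preimage. On the residual line $y_1=1$, $y_2\neq 1$ one has $P_{1,y_2}(v) = v+1-y_2$, whose unique root $v = y_2-1\in\C^*$ produces the preimage $\bigl(1/(y_2-1),\,y_2-1\bigr)$. Thus every point in a punctured neighborhood of $(1,1)$ lies in $f_k(\C^2)$, so $(1,1)\in\Em$ and the lemma follows. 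The only place where bookkeeping is delicate is tracking that the roots of $P_{y_1,y_2}$ remain non-zero under the substitution $u=y_1/v$; the degeneration at $(1,1)$ is precisely where this fails, producing the desired isolated missing point.
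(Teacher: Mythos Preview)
Your argument is correct and complete. The one expositional wrinkle is the word ``generically'' in the isolation step: for the punctured neighborhood you need \emph{every} nearby $y$ with $y_1\neq 1$ to have a preimage, not just generic ones. But this follows immediately from your setup, since the coefficient of $v$ in $P_{y_1,y_2}$ is identically $1$: even when $y_1=y_2$ (so that $v=0$ becomes a root), the remaining factor $(1-y_1)v^k+1$ contributes $k\geq 1$ nonzero roots, and any one of them yields a preimage.

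The paper proves the lemma with a different explicit family, namely
\[
(u,v)\longmapsto\bigl(P(uv)+u^kv^{k+1},\;1+uv\,P(uv)+2u^{k+1}v^{k+2}\bigr),
\]
where $P$ is a degree-$k$ polynomial with $P(0)\neq 0$; the reduction there goes through the substitution $s=uv$, $t=v$, producing a degree-$(k+1)$ equation in $s$, and the missing point is $(0,1)$. Your construction is simpler: taking the first coordinate to be the bare monomial $uv$ makes the elimination a one-line substitution, and you verify isolation explicitly rather than leaving it as a check for the reader. In spirit your map is actually closer to the family the paper uses for Theorem~\ref{th:optimal} (first coordinate $uv$, second a polynomial in $v$ with coefficients in $uv$) than to its own proof of this lemma. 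Both routes work; yours is the more transparent one for this statement.
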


\begin{proof}
Let $P$ be a univariate complex polynomial of degree $k$ such that $P(0)\neq 0$. We show that
\[
  \begin{array}{@{}ccccc@{}}
 (u,v)& \mapsto & \big( P(uv) + u^kv^{k+1},~1+uvP(uv)+2u^{k+1}v^{k+2}\big).
  \end{array}
\] satisfies the claims of the Lemma.

To compute $\mu_f$, we fix an arbitrary $y\in\C^2$ and solve $f-y =0$ for $u,v$. Make the formal substitution $s=uv$, $t = v$, and eliminate $t$, we obtain
\begin{equation}\label{eq:mu>1}
s(P(s) - 2y_1) +y_2 - 1 = 0.
\end{equation}
There are $k+1$ distinct values $s\in\C^*$ satisfying~\eqref{eq:mu>1} for a generic choice of $P$ and $y$. This makes for $k+1$ distinct couples $(uv,v)\in(\C^*)^2$. Therefore, we have $\mu_f = k+1$.

One can check that if $P(0)\neq 0$, then $f-y=0$ has no solutions only when $y=(0,1)$.
\end{proof} 

\section{Faces of finite sets and restricted systems}\label{sec:prelim-results} 

In Section~\ref{subs:faces}, we consider pairs of finite sets in a two-dimensional lattice. We introduce the notions of a face, independence, Minkowski sum and mixed volumes. In Section~\ref{subs:supp-roots}, we recall Bernstein's results in~\cite{Ber75}, relating those combinatorial notions to the number of solutions to polynomial equations.


In Section~\ref{subs:toric}, we introduce a monomial change of variables that depends on the faces of pairs of supports. We then illustrate its usefulness in keeping track of solutions outside the complex torus to a polynomial system of equations (Proposition~\ref{prop:main}). Such a description is important in computing isolated missing points of dominant polynomial maps, and will be used in the steps that follows in proving the main upper bounds.

\subsection{Pairs of finite sets, faces and mixed volume}\label{subs:faces} A \emph{polytope} $\Delta$ in $\R^2$ is a bounded intersection of closed half-planes of the form $\{\alpha_0 + \alpha_1X_1 + \alpha_2X_2\geq 0\}$, for some $\alpha_0,\alpha_1,\alpha_2\in\R$. These are called the \emph{supporting half-spaces} of $\Delta$, and their boundary intersects $\partial\Delta$ at a connected set of $\Delta$ called \emph{face}. 

Any face $F$ of $\Delta$ minimizes a function $\alpha^*:\Delta\rightarrow\R$, $(X_1,X_2)\mapsto\alpha_0 + \alpha_1X_1 + \alpha_2X_2$. In this case, we say that $\alpha=(\alpha_1,\alpha_2)$ \emph{supports} $F$. 

We formulate this terminology in terms of finite sets in $\Z^2$.

\subsubsection{Faces of finite sets}\label{subsubs:finite-sets} A \emph{face $\phi$ of a finite subset} $\Sigma\subset\Z^2$ is the intersection of $\Sigma$ with a face of its convex hull. The \emph{supporting vector} of $\phi\subset\Sigma$ is the supporting vector of the convex hull of $\phi$.

The following notation is taken from~\cite[Section 2.2]{Est13}. Recall that the \emph{Minkowski sum} $A+B$ of two subsets $A,B\subset\R^n$ is the set $\{a+b~|~a\in A,b\in B\}$. In Figure~\ref{fig:tuples+basis}, the Minkowski sum of the two subsets on the left is the set on the right.

A \emph{pair of finite sets} in $\Z^2$ refers to a pair $A:=(A_1,A_2)$, in which each \emph{member} $A_i$ $(i=1,2)$ is a finite set of $\Z^2$. A \emph{face} $\Gamma$ of a pair $A$, denoted by $\Gamma\prec A$, is a pair $\Gamma:=(\Gamma_1,\Gamma_2)$ such that $\Gamma_i$ is a face of $A_i$, $i=1,2$ and $\Gamma_1 + \Gamma_2$ is a face of $A_1+A_2$. The pair of sets on the left of Figure~\ref{fig:tuples+basis} have the pair $(\gamma,\delta)$ as a face.

The \emph{dimension} of a face, denoted $\dim \Gamma$, is the dimension of the convex hull of $\Gamma_1 +\Gamma_2$. In Figure~\ref{fig:tuples+basis}, both $(\gamma,\delta)$ and $(a,\blacksquare)$ have dimension one, $(\bigstar,\blacksquare)$ has dimension zero. 

Since a supporting vector $\alpha\in\Q^2$ of $\Gamma_1+\Gamma_2$ is also one for the face $\Gamma_i\subset A_i$ ($i=1,2$) (the function $\alpha^*$ is linear), we say that $\alpha$ \emph{supports} $\Gamma\prec A$ . In Figure~\ref{fig:tuples+basis}, $(-2,1)$ supports $(\gamma,\delta)$ and $(3,-1)$ supports $(a,\blacksquare)$.

We use the convention that a face of a pair is the pair itself if and only if the vector $(0,0)$ supports it. We refere to such face as trivial.


\begin{figure}[h]
\centering
\begin{tikzpicture}
    \tikzstyle{bluefill1} = [fill=blue!20,fill opacity=0.8]          
    \tikzstyle{blackfill1} = [fill=gray,fill opacity=0.4]          
    \tikzstyle{greyfill1} = [fill=gray!20,fill opacity=0.8]          
    \tikzstyle{conefill} = [pattern = north east lines, pattern color=gray]          
    \tikzstyle{ann} = [fill=white,font=\footnotesize,inner sep=1pt] 
    \tikzstyle{ghostfill} = [fill=white]	
    \tikzstyle{ghostdraw} = [draw=black!50]					
	\tikzstyle{ann1} = [font=\footnotesize,inner sep=1pt] 
	\tikzstyle{ann2} = [font=\normalsize,inner sep=1pt] 


\begin{scope}[xshift = -5cm, yshift = 0cm]

\begin{scope}[xshift = 1cm, yshift = 3.2cm, xscale=1.5, yscale=1.5]	   
	\draw[arrows=->, line width=0.5 mm]	(0,0)--(-0.8,0.4); 
	\draw[arrows=->, line width=0.5 mm]	(0,0)--(0.9,-0.3);   
	
	\draw [fill] (0,0) circle [radius=0.03];

	\node[ann2] at (-1.2,0.5)	{{\small $(-2,1)$}};	
	\node[ann2] at (1.4,-0.4)	{{\small $(3,-1)$}};

\end{scope}


\begin{scope}[xshift = -3cm, yshift = 0cm, xscale=1.2, yscale=1.2]


	\filldraw[bluefill1,line width=0.0 mm ](0,0)--(1,2)--(1,2.5)--(0.5,1.5)-- (0,0);	


	\draw[arrows=->,line width=0.3 mm, dotted] (0,0)-- (0,1.3); 
	
	\draw[arrows=->,line width=0.3 mm, dotted] (0,0)-- (1.3,0); 

%
%
%
%
%
	


		\node[ann1] at (0,0)   {$\bigstar$};	
		
		\draw [fill, color = blue] (0.5,1) circle [radius=0.04];

		\draw [fill, color = blue] (1,2) circle [radius=0.04];
		

		\node[ann1] at (1,2.5)   {$\blacktriangle$};	
		
		\draw [fill, color = blue] (0.5,1.5) circle [radius=0.04];

	\node[ann2] at (0.1,1)		{$a$};
	\node[ann2] at (0.4,2)		{$b$};		
	\node[ann2] at (0.9,1)		{$\gamma$};

\begin{scope}[xshift = 2 cm, yshift = 0cm]	


	\filldraw[blackfill1,line width=0.0 mm ](0,0)--(0.5,0)--(1,1)-- (0,0);	


	\draw[arrows=->,line width=0.3 mm, dotted] (0,0)-- (0,1.3); 
	
	\draw[arrows=->,line width=0.3 mm, dotted] (0,0)-- (1.3,0); 
	

%
%
%
	


	\node[ann1] at (0,0)   {$\blacksquare$};	
		

	\node[ann1] at (1,1)   {$\blacklozenge$};	

	\draw [fill] (0.5,0) circle [radius=0.04];
	
	\draw [fill] (0.5,0.5) circle [radius=0.04];
	
	\node[ann2] at (0.35,0.7)		{$c$};
	\node[ann2] at (0.35,-0.3)		{$d$};
	\node[ann2] at (1,0.5)	{$\delta$};

	\end{scope}

 \end{scope}

\end{scope}
	

\begin{scope}[xshift = 0cm, yshift = 0.5cm, xscale=1.2, yscale=1.2]


	\filldraw[greyfill1,line width=0.0 mm ](0,0)--(0.5,0)--(2,3)
										-- (2,3.5) -- (1,2.5) -- (0.5,1.5)-- (0,0);	
										

%
%
%
%
%
%



	\draw [fill, color = gray ] (0.5,0.5) circle [radius=0.04];
	
	\draw [fill, color = black ] (0.5,0) circle [radius=0.06];
	
	\draw [fill, color = black ] (2,3) circle [radius=0.06];
	
	\draw [fill, color = black ] (1,1) circle [radius=0.06];
	
	\draw [fill, color = black ] (1.5,2) circle [radius=0.06];

	\draw [fill, color = gray ] (2,3.5) circle [radius=0.04];

	\draw [fill, color = gray] (0,0) circle [radius=0.04];	
	
	\draw [fill, color = gray] (0.5,1) circle [radius=0.04];

	\draw [fill, color = gray] (1,2) circle [radius=0.04];
		
	\draw [fill, color = gray] (1,2.5) circle [radius=0.04];

	\draw [fill, color = gray] (0.5,1.5) circle [radius=0.04];

	\draw [fill, color = gray] (0.5,0.5) circle [radius=0.04];	
	
	\draw [fill, color = gray] (1,1.5) circle [radius=0.04];

	\draw [fill, color = gray] (1.5,2.5) circle [radius=0.04];
		
	\draw [fill, color = gray] (1.5,3) circle [radius=0.04];




	\draw[arrows=->,line width=0.3 mm, dotted] (0,-0.5)-- (0,1.3); 
	
	\draw[arrows=->,line width=0.3 mm, dotted] (-0.5,0)-- (1.3,0);


	\draw[arrows=->, line width=0.5 mm, color = red]	(0,0)--(0.5,1); 
	\draw[arrows=->, line width=0.5 mm, color = red]	(0,0)--(-0.5,-0.5);   
	

	\node[ann2] at (1.9,1.4)	{$\gamma +\delta$};
	

\end{scope}	
\end{tikzpicture}
\caption{\textbf{(L)}: Pair $(A_1,A_2)$ of finite subsets. \textbf{(R)}: Their Minkowski sum.}\label{fig:tuples+basis}
\end{figure}
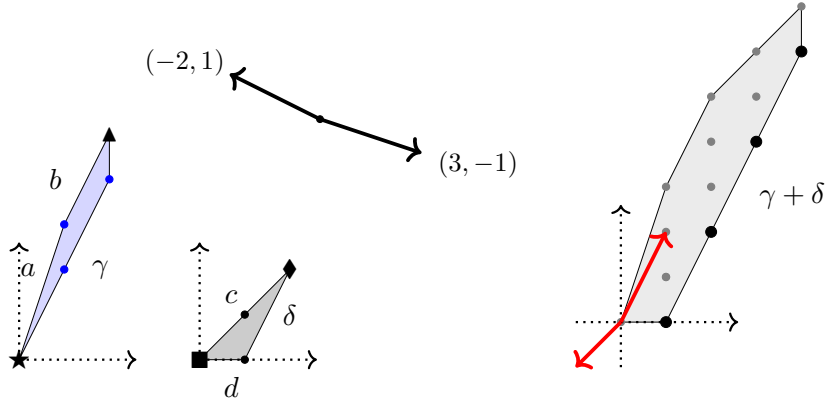

\subsubsection{Mixed volume of finite sets}\label{subs:mixed} Given a convex set $\Delta\subset\R^2$, let $\Vol (\Delta)$ denote its fixed, translation-invariant Lebesgue measure endowed on $\R^2$. Recall that Minkowski's \emph{mixed volume} is the unique real-valued multi-linear (with respect to the Minkowski sum) function of two convex sets $\Delta_1,\Delta_2\subset\R^2$, whose value at two copies of $\Delta$ equals $2\Vol(\Delta)$. It is known that $V(\Delta_1,\Delta_2)$ can be expressed as
\[
\Vol(\Delta_1 +\Delta_2) - \Vol(\Delta_1) - \Vol(\Delta_2).
\] 

One can check that if $\Delta_1 + \Delta_2$ is a line segment, or one of $\Delta_i$ is a point, then $V(\Delta_1,\Delta_2)=0$. The other direction holds true as well, and is a particular case of Minkowski's Theorem for the higher-dimensional mixed volume. We say that $(\Delta_1,\Delta_2)$ is \emph{independent} if $V(\Delta_1, \Delta_2)\neq 0$, or, equivalently, if $\dim (\sum_{i\in I} \Delta_i)\geq |I|$ for all $I\subset {1,2}$. A \emph{dependent} couple above is one that is not independent.

A couple $A$ of finite sets in $\Z^2$ is said to be \emph{independent} if the convex hulls of its members form an independent pair. Similarly, the notation $V(A)$ will refer to the mixed volume of the pair consisting of the convex hulls corresponding to $A$'s members.

\subsection{The number of roots to a system of equations}\label{subs:supp-roots} A bivariate complex polynomial $P\in\C[x_1,x_2]$ is written as a finite linear combination \[\sum_{w\in\N^2} c_wx^w,\] of monomials $x^w:=x_1^{w_1}x_2^{w_2}$, where $w:=(w_1,w_2)$ and $c_w\in\C$. The \emph{support} $\supp P$ of $P$ is the set $\{w\in\Z^2~|~c_w\neq 0\}$.

A pair $\varphi:=(\varphi_1,\varphi_2)$ of polynomials in $\C[x_1,x_2]$ identify a polynomial map $\varphi:\C^2\to\C^2$. The \emph{support} of $\varphi$ is the pair $(\supp\varphi_1,\supp\varphi_2).$ The polynomial system
\begin{equation}\label{eq:sys:phi}
  \begin{array}{@{}ccccc@{}}
 \varphi_1 (x_1,x_2)& = & 0,\\
  \varphi_2 (x_1,x_2)& = & 0,
  \end{array}
\end{equation} is denoted by $\varphi = 0$ and its set of solutions $(x_1,x_2)$ in $\C^2$ is denoted by $\mathbb{V}(\varphi)$. We use $\mathbb{V}^\circ(\varphi)$ to denote the subset $\mathbb{V}(\varphi)\cap\TT$.
We will abuse notations by writing $\#\mathbb{V}(\varphi)$ and $\#\mathbb{V}^\circ(\varphi)$ in reference to the number of isolated solutions to~\eqref{eq:sys:phi} counted with multiplicities in $\C^2$ and $\TT$ respectively.

A consequence of Bernstein's Theorem A is 
\[\#\mathbb{V}^\circ(\varphi)\leq V(\supp \varphi).\] In particular, the set of isolated solutions is empty if the support is dependent.

Another result of Bernstein will be used throughout this paper goes as follows. 

Given a face $\Gamma\prec\supp \varphi$, the restriction of the polynomial $\varphi_i$ ($i=1,2$) to monomial terms $c_wx^w$ satisfying $w\in\Gamma_i$, is denoted by $\varphi_{i,\Gamma}$. We write $\varphi_\Gamma$ to denote the pair $(\varphi_{1,\Gamma},\varphi_{2,\Gamma})$.


\begin{theorem}[Theorem B of~\cite{Ber75} for $n=2$]\label{th:BerB}
Let $\varphi$ be a pair of bivariate polynomials. Then, all solutions to $\varphi = 0$ are isolated and 
\[\#\mathbb{V}^\circ(\varphi) = V(\supp\varphi)\]
if and only if for any non-trivial face $\Gamma\prec\supp\varphi$, we have $\#\mathbb{V}^\circ(\varphi_\Gamma)=\emptyset$.
\end{theorem}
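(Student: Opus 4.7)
The plan is to prove both implications via a genericity/deformation argument combined with Puiseux series analysis, in the spirit of the toric-degeneration proof of Bernstein's theorem. Fix a pair $\psi = (\psi_1, \psi_2)$ with the same support as $\varphi$ and Zariski-generic coefficients, and consider the one-parameter family $\varphi_t := \varphi + t\psi$, writing $A := \supp\varphi$. By Bernstein's first theorem (the inequality $\#\mathbb{V}^\circ(\varphi_t) \leq V(A)$, attained for generic coefficients), for all but finitely many $t \in \C$ the system $\varphi_t = 0$ has exactly $V(A)$ isolated solutions in $\TT$.

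Next I would track these $V(A)$ solutions as $t \to 0$. By the Newton--Puiseux theorem applied over the punctured disk around $t = 0$, each such solution extends to a Puiseux arc $x(t) = (x_1(t), x_2(t))$ of the form $x_i(t) = \bar x_i \, t^{\nu_i} + \text{(higher order)}$, with $\nu_i \in \Q$ and $\bar x_i \in \C^*$. Substituting $x(t)$ into $\varphi_t$, the dominant contribution as $t \to 0$ comes from those monomials $c_w x^w$ whose exponent $w$ minimizes the pairing $\langle w, \nu \rangle$, where $\nu := (\nu_1, \nu_2)$. These monomials are exactly those indexed by the face $\Gamma_i \prec A_i$ supported by $\nu$, and cancellation of the leading order of $t$ forces the pair $(\bar x_1, \bar x_2) \in \TT$ to be a torus solution of the restricted system $\varphi_\Gamma = 0$.

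Given this tropical correspondence, the ``if'' direction is immediate: under the hypothesis that no non-trivial face of $A$ carries a torus solution, every Puiseux arc must satisfy $\nu = (0,0)$ and hence limits to a point of $\mathbb{V}^\circ(\varphi)$; a conservation-of-count argument then yields $\#\mathbb{V}^\circ(\varphi) = V(A)$ with every solution isolated. For the ``only if'' direction I would argue by contrapositive: if a non-trivial face $\Gamma$ supported by some $\nu \neq 0$ admits a torus solution $\bar x$, then after the monomial rescaling $x_i \mapsto x_i\, t^{\nu_i}$ and division by the common leading power of $t$, the implicit function theorem at $t = 0$ produces at least one Puiseux arc of $\varphi_t = 0$ with valuation vector $\nu$ and leading coefficient $\bar x$. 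This arc escapes $\TT$ in the limit, so at most $V(A) - 1$ of the $V(A)$ solutions of $\varphi_t$ can limit to points of $\mathbb{V}^\circ(\varphi)$, forcing either $\#\mathbb{V}^\circ(\varphi) < V(A)$ or a non-isolated limiting solution.

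The main obstacle I anticipate is making this Puiseux correspondence fully precise, especially the multiplicity bookkeeping in the ``if'' direction: one must verify that the total count with multiplicities is preserved in the limit, i.e., that two Puiseux arcs with $\nu = (0,0)$ cannot collide without their limits already agreeing, and that every torus solution of a face system lifts with the correct number of arcs. A more structural fallback, which I would adopt if the elementary route becomes tangled, is to embed everything into the projective toric variety $X_\Sigma$ associated with a common refinement of the normal fans of $A_1$ and $A_2$; the sections defined by $\varphi_1, \varphi_2$ then lie in nef line bundles whose toric intersection number equals $V(A)$, and the face hypothesis translates directly into the assertion that these sections avoid the toric boundary divisor of $X_\Sigma$.
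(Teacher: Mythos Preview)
The paper does not prove this statement: it is quoted verbatim as Theorem~B of Bernstein~\cite{Ber75} and used as a black box. Your deformation/Puiseux argument is essentially Bernstein's own proof, and it also matches the technique the paper deploys for its Proposition~\ref{prop:main} (the quantitative refinement of Theorem~\ref{th:BerB}): perturb $\varphi$ to a generic $\varphi+t\psi$ with $V(A)$ torus roots, expand each root as a Puiseux series in $t$, and read off the face $\Gamma$ supported by the valuation vector from the leading term. So on the ``if'' direction and on the overall strategy you are aligned with both Bernstein and the paper.

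One genuine soft spot in your ``only if'' direction: you invoke the implicit function theorem at $t=0$ to lift a torus solution $\bar x$ of $\varphi_\Gamma=0$ to a Puiseux arc of $\varphi_t=0$ with valuation $\nu$. This step, as written, assumes $\bar x$ is a \emph{simple} root of $\varphi_\Gamma$, which is not part of the hypothesis. The cleaner route---and the one implicit in the paper's toric-change-of-variables machinery (Section~\ref{subsub:base-change-faces} and Remark~\ref{rem:one-var})---is to pass to the transformed system $\overline U\varphi$ for $U\in\Trg$: the face hypothesis then says $\overline U\varphi=0$ has a root in $\C^*\times\{0\}$, and a local intersection-multiplicity argument (or your own toric-variety fallback) shows this boundary root absorbs at least one of the $V(A)$ torus roots of $\overline U\varphi_t$ as $t\to 0$, with no smoothness assumption on $\bar x$ needed.
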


\subsection{Toric change of variables}\label{subs:toric} Following~\cite{Ber75}, a \emph{unimodular toric} change of variables on $x=(x_1,x_2)$ is a map $\TT\to\TT$, $x\mapsto z =(z_1,z_2)$, such that \[x_1=z_1^{u_{11}}z_2^{u_{21}},\quad x_2=z_1^{u_{12}}z_2^{u_{2}},\] where $ U = \big(\begin{smallmatrix} u_{11} & u_{1,2}\\ u_{2,1} & u_{2,2} \end{smallmatrix}\big) \in \SL(2,\Z)$. This transformation induces an isomorphism between polynomials, which by abuse of notation, we also denote by $U$; it is 
\[U:~\C[x_1,x_2]\to\C[z_1^{\pm 1},z_2^{\pm 1}],\] taking the monomial $x^a$ to $x^{Ua}$, where $a:=(a_1,a_2)$. 

Hence, for any polynomial $P\in\C[x_1,x_2]$, we have \[\supp(U P)=U(\supp P),\] and the zero locus $\mathbb{V}^\circ(P)$ is isomorphic to $\mathbb{V}^\circ(U P)$. 
\begin{remark}\label{rem:toric-preserv}
For $U\varphi:=(U\varphi_1,U\varphi_2)$, we have \[\#\mathbb{V}^\circ(\varphi)=\#\mathbb{V}^\circ(U\varphi).\] 
\end{remark}

\begin{example}\label{ex:transf1}
Let $\varphi$ be the pair of polynomials 
\[
  \begin{array}{@{}ccccc@{}}
 \big(1+2uv^2 + 3u^2v^4 + 4uv^3 +5u^2v^5,~-1-2u - 3u^2v^2\big)
  \end{array}
\] supported on the pair of sets in Figure~\ref{fig:tuples+basis} on the left. If $ U = \big(\begin{smallmatrix} 3 & -1\\ -2 & 1 \end{smallmatrix}\big)$, then $U\varphi$ is written as
\[
  \begin{array}{@{}ccccc@{}}
\big( 1+2s + 3s^2 + 4s^2t +5s^3t,~-1-2s^{-1}t^{-2} - 3t^{-1}\big)
  \end{array}
\] 
\end{example}
A transformed pair $U\varphi$ may have monomials with negative exponents. We can transform it to a pair of polynomials by multiplying by a suitable monomial. We denote by $\overline{U}\varphi$ the pair $(\overline{U}x^{r_1}\varphi_1,\overline{U}x^{r_2}\varphi_2)$, where the coordinates of $r_1,r_2\in\N^2$ are the minimal ones that allow to clear denominators. In Example~\ref{ex:transf1}, we have $r_1=(0,0)$ and $r_2 = (1,2)$.
\subsubsection{Base-change using faces}\label{subsub:base-change-faces} We will introduce an above toric change of coordinates $U\in \SL(2,\Z)$ that will help us later deduce a useful description of points in the preimage, under a map, that escape to infinity. It turns out that the best choice for $U$ is one whose entries depend on some faces of supports. 

Let $A=(A_1,A_2)$ be a pair of finite sets in $\N^2$ and let $\Gamma=(\Gamma_1,\Gamma_2)$ be a face of $A$ (see Section~\ref{subsubs:finite-sets}) such that $\dim \Gamma =1$. Our goal is to construct a pair of vectors $\tilde{e}:=(\tilde{e}_1,\tilde{e}_2)$ as follows.

Let $a\in\N^2$ be the closest point in $\Gamma_1+\Gamma_2$ to $(0,0)$. Set $\tilde{e}_1$ to be the primitive integer vector spanning $\Gamma_1+\Gamma_2$ away from $a$ (i.e. the first coordinate of $\tilde{e}_1$ is positive). In the Example of Figure~\ref{fig:tuples+basis}, we have $\gamma +\delta$ is the set of black dots to the right, $a=(1,0)$ and $\tilde{e}_1$ is the red vector $ (1,2)$.

We choose $\tilde{e}_2$ such that

\begin{enumerate}[(i)]

\item\label{it:i} $\tilde{e}$ is the basis of the lattice $\Z^2$.

\item\label{it:ii} The Minkowski sum $A_1 + A_2 +\{-a\}$ is contained in the cone $\{b_1\tilde{e}_1+b_2\tilde{e}_2~|~b_1,b_2\in\R_{\geq 0}\}$.  Or in other words, the basis $\tilde{e}$ spans positively the lattice of points $A_1+A_2-a$.
\end{enumerate} 
This represents a linear transformation $U:\Z^2\to\Z^2$ taking $\tilde{e}$ to the canonical basis of $\Z^2$. In the Example of Figure~\ref{fig:tuples+basis}, $\tilde{e}_2$ is the red vector $(-1,-1)$.

We consider the matrix $T$, where
\[
  E =
  \big(\begin{smallmatrix} \tilde{e}_{11} & \tilde{e}_{12}\\ \tilde{e}_{21} & \tilde{e}_{22} \end{smallmatrix}\big)
  =
  \big(\begin{smallmatrix} v_1 & v_2\\ w_1 & w_2 \end{smallmatrix}\big)
  \in \SL(2,\Z) .
\]
Then, $T$ corresponds to the following change of variables $z = x^{E \binom{1}{1}}$.
Thus the transformation $U$ that we are looking for, is the inverse of this map,
that is
\[
  x_1 \mapsto z_{1}^{w_{2}/D}z_{2}^{-v_{2}/ D},
  x_2 \mapsto z_{1}^{-w_{1}/D}z_{2}^{v_{1}/D},
\]
where $D = \det(E) =  \pm 1$ is the determinant of $T$.

From the properties~\ref{it:i} and~\ref{it:ii} of $T \in \SL(2, \Z)$, we deduce that $E$,
and thus also $U$, depends on
$\Gamma$ and $A$. We denote the subset of $\SL(2,\Z)$ of all such $U$ by
$\Trg$.


 We also have the following immediate consequence. 

 \begin{lemma}\label{lem:vector-change}
   Let $\alpha\in\Z^2$ be the primitive integer vector supporting the  $\Gamma \prec A$.
   Then, for any $U\in\Trg$,  $U\Gamma$ is a face of $UA$.
   Moreover, the vector $U\cdot \alpha^{\Tr}  = (0,1)$ and it supports $U\Gamma$.
 \end{lemma}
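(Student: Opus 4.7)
The plan is to deduce both assertions directly from the defining properties (i) and (ii) of $U \in \Trg$ together with the basic fact that elements of $\SL(2,\Z)$ act as lattice automorphisms of $\Z^2$.

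First, I would show that $U\Gamma \prec UA$. The matrix $U \in \SL(2,\Z)$ acts as a $\Z$-linear bijection of $\Z^2$ that commutes with Minkowski sums and preserves convex hulls, hence preserves the face lattice of any lattice polytope. Applying $U$ pointwise to $A_i$ therefore sends $\Gamma_i \prec A_i$ to $U\Gamma_i \prec UA_i$ for $i=1,2$, and sends the face $\Gamma_1+\Gamma_2$ of $A_1+A_2$ to $U\Gamma_1+U\Gamma_2 = U(\Gamma_1+\Gamma_2)$, which is a face of $UA_1+UA_2$. This is precisely the relation $U\Gamma \prec UA$.

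Next, I would identify the primitive integer vector supporting $U\Gamma$ as $(0,1)$. By property (i), the pair $\{\tilde{e}_1,\tilde{e}_2\}$ is a $\Z$-basis of $\Z^2$, and by the construction in Section~\ref{subs:toric}, $U$ is the unique unimodular matrix with $U\tilde{e}_1 = e_1$ and $U\tilde{e}_2 = e_2$. Since $\tilde{e}_1$ spans the direction of $\Gamma_1+\Gamma_2$, its image $e_1$ spans the direction of $U\Gamma_1+U\Gamma_2$. Property (ii) places $A_1+A_2-\{a\}$ in the nonnegative cone over $\{\tilde{e}_1,\tilde{e}_2\}$, so under $U$ we obtain $UA_1+UA_2-\{Ua\}$ in the nonnegative cone over $\{e_1,e_2\}$. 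Any primitive integer vector supporting $U\Gamma$ must therefore be orthogonal to $e_1$ and take a positive value on $e_2$; these three conditions uniquely force it to be $(0,1)$.

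The main obstacle will be the explicit matrix identity $U \cdot \alpha^{\Tr} = (0,1)$. Here $\alpha$ is characterized up to sign as the primitive element of $\Z^2$ satisfying $\alpha \cdot \tilde{e}_1 = 0$ and $\alpha \cdot \tilde{e}_2 > 0$, i.e.\ as the dual basis element to $\tilde{e}_2$ with respect to the standard pairing. Using unimodularity of $U$ together with the relations $U\tilde{e}_i = e_i$, the identity reduces to a short bookkeeping check verifying that the appropriate action of $U$ on the dual vector $\alpha$ produces the dual of $e_2$ in the new coordinates, namely $(0,1)$. The delicate point is reconciling the conventions for how $U$ acts on exponent vectors in $A$ with its action on primitive supporting normals.
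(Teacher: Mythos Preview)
Your argument is correct and considerably more detailed than the paper, which offers no proof: the lemma is simply announced as an ``immediate consequence'' of the construction of $U\in\Trg$.

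One remark on the convention issue you rightly flag at the end. With the paper's convention that $U$ acts on exponent vectors by $a\mapsto Ua$, supporting covectors transform by the contragredient action $\alpha\mapsto (U^{\Tr})^{-1}\alpha$, not by left multiplication by $U$. In the running example of Figure~\ref{fig:tuples+basis} one has $\tilde e_1=(1,2)$, $\tilde e_2=(-1,-1)$, $\alpha=(-2,1)$, hence $U=\left(\begin{smallmatrix}-1&1\\-2&1\end{smallmatrix}\right)$; then $(U^{\Tr})^{-1}\alpha^{\Tr}=(0,1)^{\Tr}$ while $U\alpha^{\Tr}=(3,5)^{\Tr}$. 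So the displayed identity in the lemma should be read as the contragredient action, and your dual-basis description is exactly the right way to obtain it: since $\{\tilde e_1,\tilde e_2\}$ is a $\Z$-basis, $\alpha$ primitive with $\alpha\cdot\tilde e_1=0$ and $\alpha\cdot\tilde e_2>0$ forces $\alpha\cdot\tilde e_2=1$, whence the covector dual to $\tilde e_2$ is sent to the covector dual to $e_2$, namely $(0,1)$. Your caution about ``reconciling the conventions'' is therefore well placed rather than merely bookkeeping.
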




 

 \begin{remark}
   \label{rem:one-var}
   For any  $\varphi\in \C[x_1,x_2]^2$, where  $A = \supp(\varphi)$,
   the following hold:
   \begin{itemize}
     \item For any $U\in\Trg$, the system $\ous f_\Gamma =0$ is univariate and $\ous f=0$ is bivariate.
     \item $\ous f_\Gamma =0$ has a solution $\rho \in\C^*$ iff
     $\ous f =0$ has a solution $(\rho,0)\in \C^*\times\{0\}$ iff
     $f_\Gamma = 0$ has a solution in $\TT$.
   \end{itemize}
\end{remark}

 \begin{proposition}\label{prop:main}
 Let $\varphi$ be a pair of bivariate polynomials. Let $\mathcal{S}$ denote the set of non-trivial faces $\Gamma\prec\supp \varphi$ for which there exists a matrix $U\in\Trg$, such that $\ous\varphi = \underline{0}$ has $m_\Gamma>0$ solutions in $\C^*\times\{0\}$, counted with multiplicities. Then, we have 
\begin{equation}\label{eq:sol-numb}
\#\mathbb{V}^\circ(\varphi) = V(\supp\varphi ) -\sum_{\Gamma\in\mathcal{S}} m_\Gamma.
\end{equation} 
\end{proposition}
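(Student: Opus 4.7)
My approach is a deformation/intersection-theoretic argument that compares $\varphi$ to a generic pair $\tilde\varphi$ with the same support $A:=\supp\varphi$. By Bernstein's Theorem A we have $\#\mathbb{V}^\circ(\tilde\varphi)\leq V(A)$, and equality holds for generic $\tilde\varphi$ by Theorem~\ref{th:BerB}. The content of Proposition~\ref{prop:main} is that the defect $V(A)-\#\mathbb{V}^\circ(\varphi)$ is accounted for, face by face, by solutions of a generic deformation $\varphi_t\in\C[A]$ (with $\varphi_0=\varphi$, $\varphi_1=\tilde\varphi$) that escape to infinity in the direction of the inward normal of each $\Gamma\in\mathcal{S}$.

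First I would embed $\TT$ into a smooth complete toric surface $X$ whose fan refines the inner normal fans of the convex hulls of $A_1$, $A_2$, and $A_1+A_2$. On $X$ the polynomials $\varphi_1,\varphi_2$ extend to global sections of line bundles $L_1,L_2$ with $L_1\cdot L_2=V(A)$, and the boundary $X\setminus\TT$ is a union of smooth rational curves $D_\rho$ indexed by the rays $\rho$ of the fan. Under the generic pencil $\varphi_t\in\C[A]$, conservation of the intersection number $L_1\cdot L_2$ on the compact surface $X$ along the deformation yields, at $t=0$,
\[
V(A) \;=\; \#\mathbb{V}^\circ(\varphi) \;+\; \sum_\rho J_\rho(\varphi),
\]
where $J_\rho(\varphi)$ is the sum of local intersection multiplicities of $\{\varphi_1=0\}$ and $\{\varphi_2=0\}$ on the open stratum of $D_\rho$; the toric fixed points of $X$ are avoided by genericity of the pencil, as requiring an intersection there would simultaneously vanish two distinct face systems and is a codimension-two condition.

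Next I would identify each nonzero $J_\rho(\varphi)$ with some $m_\Gamma$. A ray $\rho$ with primitive generator $\alpha$ can contribute nontrivially only when $\alpha$ supports a one-dimensional face $\Gamma=(\Gamma_1,\Gamma_2)\prec A$: otherwise some $\Gamma_i$ is a single point, $L_i|_{D_\rho}$ is trivial, and the restricted pencil has no zero on $D_\rho$. For such a ray, pick any $U\in\Trg$ as in Lemma~\ref{lem:vector-change}. After this toric change and the minimal monomial clearing of denominators, the chart provided by $U$ identifies a neighborhood of the open stratum of $D_\rho$ in $X$ with a neighborhood of $\C^*\times\{0\}$ in $\C^2$, in which $\varphi$ becomes $\ous\varphi$ and the stratum becomes $\{z_2=0\}$. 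By definition the sum of bivariate local intersection multiplicities of $\{\ous\varphi_1=0\}$ and $\{\ous\varphi_2=0\}$ at points of $\C^*\times\{0\}$ is exactly $m_\Gamma$; hence $J_\rho(\varphi)=m_\Gamma$, and summing over $\rho$ and rearranging gives the desired formula.

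The step I expect to require the most care is the multiplicity bookkeeping at the boundary: one must verify that the bivariate intersection multiplicity of $\{\ous\varphi_1=0\}$ and $\{\ous\varphi_2=0\}$ at a point of $\C^*\times\{0\}$ coincides with the local contribution to $L_1\cdot L_2$ at the corresponding point of $D_\rho$. This rests on the smoothness of $X$ along the open stratum of $D_\rho$, so that its local ring is regular; on the minimality of the clearing exponents $r_1,r_2$, so that $\ous\varphi_i$ is not divisible by $z_2$ and the face system on $\{z_2=0\}$ is non-degenerate; and on Remark~\ref{rem:toric-preserv}, which guarantees that torus solutions are preserved by $U$. Once these ingredients are combined, the boundary contributions add up to $\sum_{\Gamma\in\mathcal{S}} m_\Gamma$ and the proposition follows.
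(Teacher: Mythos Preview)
Your approach is correct and genuinely different from the paper's. The paper follows Bernstein's original strategy: it takes a one-parameter deformation $\varphi+t\psi$ with $\psi$ generic, so that for $t\neq 0$ there are exactly $V(A)$ torus solutions, and then tracks each branch $x(t)$ via its Puiseux expansion. A branch escaping $\TT$ as $t\to 0$ is shown, through the leading-exponent vector $\alpha$, to single out a face $\Gamma$ and (after applying $U\in\Trg$) to converge to a point of $\C^*\times\{0\}$ solving $\ous\varphi=0$; the genericity of $\psi$ forces this limit map to be a bijection onto the multiset of such boundary solutions, which gives the defect formula. Your argument replaces this analytic tracking by global intersection theory on a smooth toric compactification: the identity $L_1\cdot L_2=V(A)$, together with the decomposition of the intersection cycle into its torus part and its boundary part, yields the formula in one stroke, and the change of variables $U$ serves only to identify the open stratum of each boundary divisor with $\C^*\times\{0\}$. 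Your route is cleaner conceptually and generalises more readily; the paper's route is more elementary and stays closer to Bernstein's original paper, which it explicitly invokes.

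One point in your write-up deserves correction. You justify the absence of contributions at the toric fixed points by ``genericity of the pencil'', but at $t=0$ the pair is the given $\varphi$, not a generic one, so a codimension argument in $\C[A]$ does not apply. The correct reason is that $A=\supp\varphi$ by hypothesis: since the fan of $X$ refines the normal fans of $A_1$ and $A_2$, every two-dimensional cone of $X$ supports a vertex of each $A_i$, and the corresponding coefficient of $\varphi_i$ is nonzero; hence neither section vanishes at any torus-fixed point and no intersection multiplicity is hidden there. With that fix (and the tacit assumption, shared by the paper, that the intersection in $\TT$ is zero-dimensional so that $\#\mathbb{V}^\circ(\varphi)$ equals the torus contribution to $L_1\cdot L_2$), your argument goes through. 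The deformation you introduce is in fact unnecessary once $L_1\cdot L_2=V(A)$ is taken as known.
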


\begin{proof}
Let $A$ denote $\supp\varphi$. We proceed using similar arguments as in the proof of~\cite[Theorem B]{Ber75}. Consider the parametrized polynomial system\begin{equation}\label{eq:parametr}
\varphi + t\psi = 0,
\end{equation}  where $\psi := (\psi_1,\psi_2)\in\C[A]$ and $t\in ]0,1[$. Using~\cite[Theorem A]{Ber75}, one can choose the pair $\psi$ so that~\eqref{eq:parametr} has $V(A)$ parametrized isolated solutions $x(t)\in\TT$. The result follows by proving the claim: \emph{There exists a bijection $x(t)\mapsto\rho$ between the set of solutions to~\eqref{eq:parametr} in $\TT$, escaping $\TT$ as $t\rightarrow 0$, and the multiset defined as the union of all solutions in $\C^*\times {0}$, counted with multiplicities, to systems $\ous\varphi = 0$ above.}

A solution $x(t)$ to~\eqref{eq:parametr} can be presented as a function in $t$, where its $j$-th coordinate ($j=1,2$) is written as the Puiseux series 
\[
a_jt^{\alpha_j}+~\text{higher order terms in $t$},
\] where $a_j\in\C^*$ and $\alpha_j\in\Q$. 

If we plug $x(t)$ into $\varphi_i + t\psi_i$ and set the coefficient of the smallest power $p$ of $t$ equal to zero, we obtain $\varphi_{i,\Gamma}(a)=0$ for some $\Gamma\prec A$. Indeed, the value $p$ is the minimum $\min(\langle\alpha,q\rangle~|~q\in A_i)$, which is reached only for points $q$ in the face of $A_i$ supported by the vector $\alpha:=(\alpha_1,\alpha_2)$. Therefore, $a:=(a_1,a_2)\in\TT$ is a solution to 
\[\varphi_\Gamma=0.\] 
Assuming in what follows that $\#\mathbb{V}^{\circ}<V(A)$ (the result follows automatically from Theorem~\ref{th:BerB} if we do not make this assumption). 

Let us choose one $x(t)$ above that escapes $\TT$ for small enough $t$. 

On the one hand, the vector $\beta:=U\cdot\alpha^{\tr}$ supporting the face $U\Gamma$ of $UA$ is equal to $(0,1)$ (see Lemma~\ref{lem:vector-change}). On the other hand, the point $z(t)$, defined as 
\begin{equation}\label{eq:zt}
z^U(t)=x(t)
\end{equation}is a solution to \begin{equation}\label{eq:param-trsf}
\ous(\varphi + t\psi) = 0.
\end{equation} Then, for $j=1,2$, we have 
\[
z_j(t)=b_jt^{\beta_j}+~\text{higher order terms in $t$},
\] for some $b:=(b_1,b_2)\in\TT$. Therefore, the limit $z(0)$ belongs to $\C^*\times\{0\}$, and is a solution to $\ous\varphi = 0$.

This description implies the following: Each $x(t)$ escaping $\TT$ determines a unique proper face $\Gamma\prec A$, together with a matrix $U\in\Trg$ such that $\ous\varphi=0$ has a solution $\rho\in\C^*\times\{0\}$ satisfying $\rho\in\C^*\times\{0\}$, and $\rho =\lim_{t\rightarrow 0} z(t)$, where $z(t),x(t)$ satisfy~\eqref{eq:zt}. 

The multiplicity $m_\rho$ of $\rho$ is no less than the number $N_\rho$ of distinct solutions $z(t)$ to~\eqref{eq:param-trsf}, converging to $\rho$. 

Finally, our choice of $\psi$ in the beginning implies that for any $t$, the system
\[
\ous(\varphi +t\psi)_\Gamma=0
\] has no solutions in $\TT$ for any proper $\Gamma$. Then,~\eqref{eq:param-trsf} has no solutions in $\C^*\times\{0\}$ for any matrix $U\in\Trg$. Therefore, we have $m_\rho = N_\rho$.
\end{proof}

\section{Relevant faces and proof of Theorem~\ref{th:family}}\label{sec:relevant} One does not require the data of all monomial terms appearing in a polynomial map in order to compute its isolated missing points. In this section, we point out
those faces that are pertinent to such computation. We will utilize all notations and Proposition~\ref{prop:main} in the previous section to prove some important technical results. These will be used to prove Theorem~\ref{th:family} and the results in Sections~\ref{sec:proof-prop1} --~\ref{sec:proof-prop3}.

Consider a dominant polynomial map $f:=(f_1,f_2):\C^2\to\C^2$ and let $A=(A_1,A_2)$ denote the pair $\supp f$. For any $y=(y_1,y_2)\in\C^2$, we denote by $f-y$ the pair $(f_1-y_1,f_2-y_2)$, and by $f-y = 0$ the system
\[
  \begin{array}{@{}ccccc@{}}
f_1(x_1,x_2) - y_1& = & 0,\\
f_2(x_1,x_2) - y_2& = & 0.
  \end{array}
\] 

We are interested in describing the set $\C^2\setminus f(\C^2)$. This problem is invariant under translation $f+c$ for any $c\in\C^2$. Therefore, we will assume in what follows that $f(0,0)\in\TT$. This implies that each member of $A$ contains $(0,0)$ (see e.g. Example~\ref{ex:transf1} and Figure~\ref{fig:tuples+basis}).

\begin{remark}\label{rem:dom-indep}
Note that $f$ maps $\C^2$ onto a line if $A_i=\{(0,0)\}$ for some $i\in\{1,2\}$, and onto a curve if $\dim A_1 + A_2 = 1$. Therefore, it is necessary for $A$ to be independent (see Section~\ref{subs:mixed}) in order for $f$ to be dominant.
\end{remark}

\subsection{Face-classification and generic non-properness}\label{subs:gen-non} We will distinguish several types of faces of $A$ (see the diagram in Figure~\ref{fig:diagram}).

\begin{center}
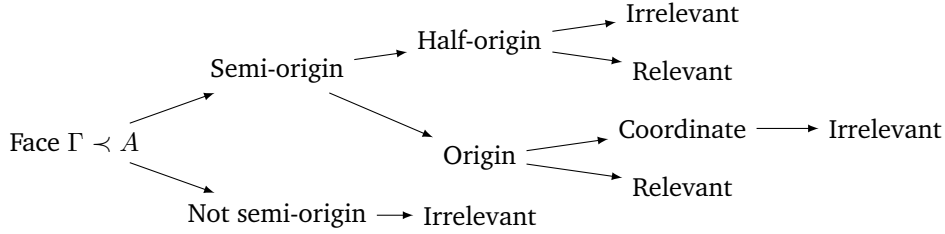
\begin{figure}[h]
\begin{tikzpicture}
  [
    grow                    = right,
    sibling distance        = 6em,
    level distance          = 5em,
    edge from parent/.style = {draw, -latex},
    every node/.style       = {font=\footnotesize},
    sloped
  ]
  \node [root]{Face $\Gamma\prec A$}
       child[sibling distance  = 5em,level distance  = 7em]{ node [env] {Not semi-origin}
		child[sibling distance  = 5em,level distance  = 7em]{ node [env] {Irrelevant}}       
       }
     child[sibling distance  = 5em,level distance  = 7em]{ node [env] {Semi-origin} 
        	child[sibling distance  = 6em,level distance  = 7em]{ node [root] {Origin} 
        child[sibling distance  = 2em,level distance  = 7em]{ node [env] {Relevant}}    
	    child[sibling distance  = 2em,level distance  = 7em]{ node [env] {Coordinate}
	    		    child[sibling distance  = 2em,level distance  = 7em]{ node [env] {Irrelevant}}}       
        }
        	child[sibling distance  = 2em,level distance  = 7em]{ node [root] {Half-origin} 
        child[sibling distance  = 2em,level distance  = 7em]{ node [env] {Relevant}}    
	    child[sibling distance  = 2em,level distance  = 7em]{ node [env] {Irrelevant}}       
        }};
\end{tikzpicture}
\caption{Diagram classifying types of faces}\label{fig:diagram}
\end{figure}
\end{center}

\begin{definition}\label{def:face-non-prop}
Let $A$ be an independent pair of finite sets in $\N^2$. A face $\Gamma = (\Gamma_1,\Gamma_2)\prec A$ is 

\begin{itemize}

	\item \emph{semi-origin} if at least one of its members contains the origin, that is $(0,0)\in\Gamma_1$, or $(0,0)\in\Gamma_2$.
	
		\begin{itemize}

			\item[-] $\Gamma$ is \emph{origin} if both its members contain $(0,0)$.		
			\item[-] $\Gamma$ is \emph{half-origin} otherwise.		
		
		\end{itemize}
		
	\item \emph{coordinate} if all of its supporting vectors $\alpha = (\alpha_1,\alpha_2)$ (see Section~\ref{subs:faces}) have one coordinate that is positive and another that is zero.
	
	\item \emph{relevant} if it is semi-origin, not coordinate, and no member of $\Gamma$ is a point different from $(0,0)$.
	\item \emph{irrelevant} if it is not relevant.
\end{itemize}
\end{definition}

\begin{example}\label{ex:non-prop}
The pair of sets on the left of Figure~\ref{fig:tuples+basis} give the following classification for its faces: 
\begin{itemize}

	\item Origin faces: $(\bigstar,\blacksquare)$, $(a,\blacksquare)$, $(\bigstar,d)$ 

	\item Half-origin faces: $(b,\blacksquare)$, $(\blacktriangle,\blacksquare)$, $(\blacktriangle,c)$, $(\gamma,\delta)$, $(\bigstar,\circ)$	

	\item Coordinate faces: $(\bigstar,d)$	
	
	\item Relevant faces: $(a,\blacksquare)$, $(b,\blacksquare)$, $(\bigstar,\blacksquare)$, $(\gamma,\delta)$
	
	\item The face $(\blacktriangle,c)$ is semi-origin, not coordinate, but not relevant. This is since $\dim\blacktriangle = 0 $, but $\blacktriangle\neq \{(0,0)\} $. The same goes for $(\bigstar,\circ)$\end{itemize}
\end{example} Recall the notation in Section~\ref{sec:prelim-results}.

\begin{lemma}\label{lem:not-coordinate}
If $\#\mathbb{V}(f)<V(A)$, then there exists a face $\Gamma\prec A$ that is not coordinate, such that the system $
f_\Gamma = 0$ has a solution in $\TT$.
\end{lemma}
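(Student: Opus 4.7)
The plan is to argue by contradiction via Proposition~\ref{prop:main}. Since $f(0,0)\in\TT$ forces $(0,0)\in A_i$ for $i=1,2$, any coordinate face of $A$ must be supported by $(1,0)$ or $(0,1)$, hence is of the form
\[
\Gamma^1 := \bigl(A_1\cap\{w_1=0\},\,A_2\cap\{w_1=0\}\bigr)\quad\text{or}\quad
\Gamma^2 := \bigl(A_1\cap\{w_2=0\},\,A_2\cap\{w_2=0\}\bigr).
\]
Assume toward a contradiction that every non-trivial face $\Gamma\prec A$ with $\mathbb{V}^\circ(f_\Gamma)\neq\emptyset$ is coordinate. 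Then, in the notation of Proposition~\ref{prop:main}, $\mathcal{S}\subseteq\{\Gamma^1,\Gamma^2\}$, so
\[
\#\mathbb{V}^\circ(f) \;=\; V(A) - m_{\Gamma^1} - m_{\Gamma^2},
\]
with the convention $m_{\Gamma^i}=0$ when $\Gamma^i\notin\mathcal{S}$.

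The crux is to identify $m_{\Gamma^1}$ with the count, weighted by local intersection multiplicity, of isolated zeros of $f$ on the axis $\{x_1=0,\,x_2\in\C^*\}$, and analogously for $m_{\Gamma^2}$. Observe that $f_{i,\Gamma^1}(x_2)=f_i(0,x_2)$, so a common root $c\in\C^*$ of the pair $f_{\Gamma^1}$ is exactly an axis zero $(0,c)\in\mathbb{V}(f)$. Tracing through the proof of Proposition~\ref{prop:main}, $m_{\Gamma^1}$ counts branches $x(t)\in\TT$ of the perturbed system $f+t\psi=0$ satisfying $x_1(t)\to 0$ and $x_2(t)\to c\in\C^*$. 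By continuity every such branch limits to a point $(0,c)\in\mathbb{V}(f)$. Conversely, for a generic $\psi\in\C[A]$ one has (a) all $V(A)$ perturbation zeros are simple and (b) the univariate pair $f_i(0,\cdot)+t\psi_i(0,\cdot)$ has no common root in $\C^*$ for small $t>0$; by conservation of intersection number the local intersection multiplicity of $f$ at $(0,c)$ then equals the number of branches of the deformation landing at that point, all of which necessarily lie in $\TT$. Summing over axis zeros yields the identification, and symmetrically for $\Gamma^2$.

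Because $f(0,0)\in\TT$ excludes $(0,0)$ from $\mathbb{V}(f)$, the set of isolated zeros decomposes disjointly as
\[
\mathbb{V}(f) \;=\; \mathbb{V}^\circ(f) \;\sqcup\; \bigl(\mathbb{V}(f)\cap\{x_1=0\}\bigr) \;\sqcup\; \bigl(\mathbb{V}(f)\cap\{x_2=0\}\bigr).
\]
Counting each piece with intersection multiplicity and combining with the previous step gives $\#\mathbb{V}(f) = \#\mathbb{V}^\circ(f) + m_{\Gamma^1} + m_{\Gamma^2} = V(A)$, contradicting the hypothesis $\#\mathbb{V}(f)<V(A)$. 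Hence some face in $\mathcal{S}$ must be non-coordinate. The main obstacle is the multiplicity identification in the second paragraph: one must verify, via a careful genericity argument on $\psi$, that the deformation branches at each axis zero $(0,c)$ are in bijection with the local intersection data of $f$ there and that none strays onto the axis itself. This is the standard conservation-of-intersection-number principle for planar polynomial systems under flat deformation, but it must be spelled out explicitly to close the proof.
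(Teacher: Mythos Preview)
Your proof is correct and follows the same decomposition as the paper's: split $\#\mathbb{V}(f)$ into torus zeros (counted via Proposition~\ref{prop:main}) and axis zeros (identified with $m_{\Gamma^1}+m_{\Gamma^2}$), then sum and compare to $V(A)$. The deformation argument you flag as the ``main obstacle'' is more than is needed, since for a coordinate face the matrix $U\in\Trg$ extends to a polynomial automorphism of $\C^2$ (essentially a coordinate swap), so the identification of $m_{\Gamma^i}$ with the axis intersection multiplicity is immediate; the paper simply asserts this step without further comment.
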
 

\begin{proof}
Split the isolated points in $\#\mathbb{V}(f)$ into two subsets: Those in $\TT$ and those outside it. 

The former has size equal to
\begin{equation}\label{eq:size1}
V(A) -\sum m_\Gamma,
\end{equation} for all $\Gamma$ as in Proposition~\ref{prop:main} and the latter has size equal to 
\begin{equation}\label{eq:size2}
\sum_{\substack{\Gamma \prec A\\ \Gamma~\text{coordinate}}} m_\Gamma.
\end{equation} 
We sum up~\eqref{eq:size1} and~\eqref{eq:size2} to deduce that $m_\Gamma >0$ for some $\Gamma\prec A$ that is not coordinate. This yields the proof.
\end{proof}

Recall Definition~\ref{def:gen-prop} for generically non-proper maps.

\begin{lemma}\label{lem:gen-non-degree}
Let $f$ be generically non-proper. Then, we have 
\[\mu_f = V(A).\]
\end{lemma}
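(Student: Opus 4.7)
My plan is to show $\mu_f\leq V(A)$ and $\mu_f\geq V(A)$ separately, by reducing to Bernstein's Theorems applied to $f-y$ for generic $y\in\C^2$. Since $\mu_f$ is the cardinality of a generic fiber, $\mu_f=\#\mathbb V(f-y)$ for generic $y$. I would first argue that such a generic fiber lies entirely in $\TT$: the restrictions $f|_{\{x_1=0\}}$ and $f|_{\{x_2=0\}}$ each have image of dimension at most one, so generic $y$ avoids them both. Hence $\mu_f=\#\mathbb V^\circ(f-y)$ for generic $y$.

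Next, the hypothesis $f(0,0)\in\TT$ forces $(0,0)\in A_i$ for $i=1,2$, so for generic $y$ the constant terms of $f_i-y_i$ are nonzero and $\supp(f-y)=A$. Bernstein's Theorem~A then gives the easy inequality $\mu_f\leq V(A)$. For the reverse bound I would invoke Theorem~\ref{th:BerB} applied to $f-y$: it suffices to verify that, for $y$ in a dense subset of $\C^2$ and every non-trivial face $\Gamma\prec A$, the face system $(f-y)_\Gamma=0$ has no solution in $\TT$.

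To establish this last condition I would perform a case analysis keyed to Definition~\ref{def:face-non-prop}. If $\Gamma$ is not semi-origin, then $(f-y)_\Gamma=f_\Gamma$, and the reverse direction of Theorem~\ref{th:BerB} applied to the generic non-properness identity $\#\mathbb V^\circ(f)=V(A)$ yields $\#\mathbb V^\circ(f_\Gamma)=0$, independently of $y$. If $\Gamma$ is semi-origin, then either some $\Gamma_i$ equals $\{(0,0)\}$, in which case $(f_i-y_i)_\Gamma=f_i(0,0)-y_i$ is a nonzero constant for generic $y_i$; or $\Gamma$ is one-dimensional, and after applying some $U\in\Trg$ the face system becomes a pair of Laurent polynomials in a single coordinate (with the other coordinate free), whose simultaneous vanishing forces $y$ into a finite union of proper algebraic subsets of $\C^2$ that generic $y$ avoids.

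The main technical subtlety, as I see it, arises in the subcase where both $\Gamma_1$ and $\Gamma_2$ are one-dimensional and parallel with $(0,0)\in\Gamma_1\cup\Gamma_2$: after the toric reduction, one factor contributes a finite root set $\{\alpha_1,\dots,\alpha_r\}\subset\C^*$, and solvability of the full face system requires $y_1$ to lie in the finite image of $\{\alpha_i\}$ under the univariate polynomial obtained from $f_{1,\Gamma}$; generic $y_1$ avoids this set. Once this case is settled, Theorem~\ref{th:BerB} gives $\#\mathbb V^\circ(f-y)=V(A)$ for generic $y$, yielding $\mu_f\geq V(A)$ and hence equality.
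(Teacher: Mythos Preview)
Your argument is correct, but it is considerably more elaborate than the paper's. The paper proves the lemma in one line by a sandwich argument that exploits the definition of generic non-properness directly at the single point $y=0$: by Definition~\ref{def:gen-prop} the system $f_1=f_2=0$ already has $V(A)$ isolated solutions in $(\C^*)^2$, so $\#f^{-1}(0)\geq V(A)$; since $f$ is dominant, the number of isolated points in any fiber is at most $\mu_f$, giving $V(A)\leq\#f^{-1}(0)\leq\mu_f$; and the opposite inequality $\mu_f\leq V(A)$ is the easy Bernstein bound. No case analysis on faces is needed.

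Your route instead establishes $\mu_f\geq V(A)$ by verifying, for generic $y$, the hypothesis of Theorem~\ref{th:BerB} face by face. This works, but notice that you are essentially redoing the content of Lemma~\ref{lem:solutions-infty1} (non-semi-origin faces never contribute) and anticipating part of Lemma~\ref{lem:solutions-infty2} (semi-origin faces contribute only for $y$ in a proper subvariety). One small imprecision: in your ``main technical subtlety'' you describe one factor as contributing a fixed finite root set $\{\alpha_i\}$, which is accurate for half-origin long faces but not for origin long faces, where both face polynomials depend on $y$; there the correct statement is that the resultant in the face variable cuts out a curve in the $y$-plane that generic $y$ avoids. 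What your longer approach buys is that it makes explicit the locus of $y$ where equality fails, which is precisely the Jelonek set---but for the present lemma the paper's three-line sandwich is the cleaner proof.
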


\begin{proof} 
We have $\# f^{-1}(0) = V(A)$ from Definition~\ref{def:gen-prop}, $\mu_f\leq V(A)$ from Theorem~\ref{th:BerB} and $\# f^{-1}(0) \leq \mu_f$ from $f$ being dominant. 
\end{proof}

\begin{lemma}\label{lem:solutions-infty1} 
Let $f$ be generically non-proper. If for some face $\Gamma\prec A$, there exists $y\in \C^2$ such that the system 
$(f-y)_\Gamma = 0$ has a solution in $\TT$, then $\Gamma$ is a semi-origin face. 
\end{lemma}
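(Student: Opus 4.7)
The plan is to argue by contrapositive: assuming $\Gamma\prec A$ is \emph{not} semi-origin, I will show that no $y\in\C^2$ makes $(f-y)_\Gamma=0$ have a solution in $\TT$.

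The first step is a combinatorial observation. By Definition~\ref{def:gen-prop}, $f(0,0)\in\TT$, so the constant term of each $f_i$ is non-zero and hence $(0,0)\in A_1\cap A_2$. The assumption that $\Gamma$ is not semi-origin means $(0,0)\notin\Gamma_1$ and $(0,0)\notin\Gamma_2$. Now, for a constant $y=(y_1,y_2)$, the only monomial of $f_i-y_i$ absent from $f_i$ is the constant $-y_i$, supported at $(0,0)$. Since $(0,0)\notin\Gamma_i$, restricting to exponents in $\Gamma_i$ removes the constant term contribution in either case and yields
\[
(f-y)_{i,\Gamma} \;=\; f_{i,\Gamma},\qquad i=1,2,
\]
so $(f-y)_\Gamma=f_\Gamma$ regardless of $y$.

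The second step is an appeal to Bernstein's Theorem~\ref{th:BerB}. By Definition~\ref{def:gen-prop}, the system $f=0$ has $V(A)$ isolated solutions in $\TT$, which is precisely $\#\mathbb{V}^\circ(f)=V(\supp f)$ with all solutions isolated. By the ``only if'' direction of Theorem~\ref{th:BerB}, for every non-trivial face $\Gamma\prec A$ we must have $\mathbb{V}^\circ(f_\Gamma)=\emptyset$. Combined with the first step, this gives the contradiction: a solution of $(f-y)_\Gamma=0$ in $\TT$ would simultaneously be a solution of $f_\Gamma=0$ in $\TT$, which is impossible.

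It only remains to handle the trivial face $\Gamma=A$: here $(0,0)\in A_1\cap A_2$, so $\Gamma$ is automatically semi-origin and nothing is to prove. The main content of the argument is really just the combinatorial identity $(f-y)_\Gamma=f_\Gamma$ when $\Gamma$ is not semi-origin; once that is noted, Bernstein's theorem does the rest, so I do not anticipate any serious obstacle.
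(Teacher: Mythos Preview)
Your proof is correct and follows essentially the same approach as the paper: both note that $(f-y)_\Gamma=f_\Gamma$ whenever $\Gamma$ is not semi-origin, and then invoke Theorem~\ref{th:BerB} to contradict $\#\mathbb{V}^\circ(f)=V(A)$. The paper phrases the last step via Lemma~\ref{lem:gen-non-degree} rather than appealing directly to Definition~\ref{def:gen-prop}, and omits your explicit remark on the trivial face, but these are cosmetic differences.
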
 

\begin{proof}
Let $y$ be any point in $\C^2$, and let $\Gamma$ be a face of $A$ that is not semi-origin. Then, we have $(f-y)_\Gamma = f_\Gamma$. Therefore, if $(f-y)_\Gamma = 0$ has a solution in $\TT$, then so does $f_\Gamma = 0$. 

We deduce from Theorem~\ref{th:BerB} that $\#\mathbb{V}^\circ(f)<V(A)$. This contradicts Lemma~\ref{lem:gen-non-degree}. 
\end{proof}

Recall Definition~\ref{def:Jel} for the Jelonek set $\mathcal{J}_f$ of $f$. The following result will be used in Section~\ref{sec:proof-prop2}.

\begin{lemma}\label{lem:solutions-infty2} 
Let $f$ be generically non-proper. Then, we have $y\in\mathcal{J}_f$ if and only if for some relevant $\Gamma\prec A$, the system $ (f-y)_\Gamma = 0 $ has a solution in $\TT$. 
\end{lemma}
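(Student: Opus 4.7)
The plan is to connect the Euclidean behavior $\|x_k\|\to\infty$, $f(x_k)\to y$ to tropical data of a face $\Gamma\prec A$ via Puiseux-series expansions and the toric machinery of Section~\ref{subs:toric}. I would handle the two implications in turn, with the forward direction essentially mirroring the proof of Proposition~\ref{prop:main}.

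$(\Rightarrow)$ Starting from $y\in\mathcal{J}_f$, invoke a curve selection lemma to replace $\{x_k\}$ with an analytic arc $x(t)=(a_1 t^{\alpha_1}+\cdots,\,a_2 t^{\alpha_2}+\cdots)$ with $a=(a_1,a_2)\in\TT$, $\alpha=(\alpha_1,\alpha_2)\in\Q^2$, $\|x(t)\|\to\infty$ as $t\to 0^+$ (so $\min(\alpha_1,\alpha_2)<0$), and $f(x(t))\to y$. Substituting into $f_i$, the leading $t$-term is $f_{i,\Gamma}(a)\,t^{m_i}$, where $\Gamma$ is the face of $A$ supported by $\alpha$ and $m_i=\min_{q\in A_i}\langle\alpha,q\rangle$. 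Convergence of $f(x(t))$ to a finite $y$ forces $m_i\geq 0$, while $(0,0)\in A_i$ forces $m_i\leq 0$; hence $m_i=0$, the origin lies in each $\Gamma_i$, and matching leading coefficients gives $(f-y)_\Gamma(a)=0$. Relevance is then routine: $\Gamma$ is semi-origin (in fact origin); not coordinate, since the negative coordinate of $\alpha$ rules out the shapes $(\text{positive},0)$ and $(0,\text{positive})$; and neither member can be a non-origin singleton, because $(0,0)\in\Gamma_i$.

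$(\Leftarrow)$ For the converse, take a relevant face $\Gamma$ with a solution $a\in\TT$ of $(f-y)_\Gamma=0$, and pick a primitive supporting vector $\alpha$ of $\Gamma$ with a negative coordinate. Apply the toric transformation $U\in\Trg$ from Section~\ref{subsub:base-change-faces}: by Lemma~\ref{lem:vector-change} and Remark~\ref{rem:one-var}, the system $\ous(f-y)_{U\Gamma}=0$ is univariate and admits $a^U\in\C^*$ as a root. A Newton--Puiseux lift, in the same spirit as the parametrized deformation argument in the proof of Proposition~\ref{prop:main}, extends $a^U$ to a formal Puiseux solution $z(t)=(a^U+\cdots,\,t)$ of $\ous(f-y)(z(t))=0$; unfolding via $x=z^E$ produces an arc $x(t)$ in $\C^2$ with $f(x(t))\to y$ and $\|x(t)\|\to\infty$, the last because the negative coordinate of $\alpha$ translates through $E^{-1}$ into some $x_j(t)$ blowing up. Hence $y\in\mathcal{J}_f$.

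The main obstacle is the subcase in $(\Leftarrow)$ where every supporting vector of $\Gamma$ lies in the closed positive quadrant. A case inspection (using $A_1+A_2\subset\N^2$ with inner normal cone at $(0,0)$ equal to the non-negative quadrant, and the fact that every non-coordinate, non-trivial $\alpha$ of the form $(0,\text{positive})$ or $(\text{positive},0)$ is excluded) forces $\Gamma=(\{(0,0)\},\{(0,0)\})$ and $y=f(0,0)$; the toric curve construction above then only produces $x(t)\to(0,0)$, not an unbounded arc. My plan for this case is to exploit generic non-properness directly: applying Proposition~\ref{prop:main} to the shifted pair $f-f(0,0)$ (whose support drops $(0,0)$ from each member because the constant terms cancel) yields $\#\mathbb{V}^\circ(f-f(0,0))\leq V(\supp(f-f(0,0)))<V(A)=\mu_f$ by Lemma~\ref{lem:gen-non-degree}, where the strict inequality uses that $(0,0)$ is a vertex of each convex hull of $A_i$. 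The resulting shortfall in the $\TT$-count of the fiber over $f(0,0)$ must be absorbed by branches at the toric boundary; the face classification in Definition~\ref{def:face-non-prop} together with Lemma~\ref{lem:solutions-infty1} and the three mixed-volume equalities of Definition~\ref{def:gen-prop} rule out complete absorption at bounded boundary strata, forcing at least one branch to escape to Euclidean infinity and thereby placing $f(0,0)\in\mathcal{J}_f$.
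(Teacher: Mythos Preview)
Your route---building explicit Puiseux arcs---is genuinely different from the paper's root-counting argument, but both directions of your proposal have gaps. The serious one is in $(\Leftarrow)$: you assert a Newton--Puiseux lift of $(a^U,0)$ to a curve $z(t)=(a^U+\cdots,t)$ solving $\ous(f-y)(z(t))=0$, but $\ous(f-y)=0$ is a square system in $(z_1,z_2)$ whose solution set is (generically) zero-dimensional, so $(a^U,0)$ is an isolated solution and no such branch exists. You seem to be conflating the coordinate $z_2$ with the deformation parameter $t$ of Proposition~\ref{prop:main}. For an \emph{origin} face the naive monomial arc $x_j(t)=a_jt^{\alpha_j}$ already gives $f(x(t))\to y$, so your conclusion happens to survive there; but for a \emph{half-origin} face (say $(0,0)\notin\Gamma_2$, hence $m_2<0$) the monomial arc kills only the leading term $f_{2,\Gamma}(a)t^{m_2}$, while subleading negative-exponent terms of $f_2(x(t))$ may still diverge, and your text supplies no mechanism for the lower-order corrections that would be needed. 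The paper bypasses all of this by the contrapositive: $y\notin\mathcal{J}_f$ gives $\#\mathbb V(f-y)=V(A)$ (Lemma~\ref{lem:gen-non-degree}), and the formula of Proposition~\ref{prop:main} then forces $m_\Gamma=0$ for every non-coordinate face.

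There is also an oversight in $(\Rightarrow)$: the step ``convergence forces $m_i\geq 0$'' presupposes $f_{i,\Gamma}(a)\neq 0$. When this leading coefficient vanishes nothing prevents $m_i<0$, so $\Gamma$ need only be half-origin, not origin as you claim. You never invoke the generic non-properness hypothesis in this direction, yet it is exactly what (via Lemma~\ref{lem:solutions-infty1}) excludes the residual case where both leading coefficients vanish and $\Gamma$ fails to be semi-origin at all. The paper's argument is simply $y\in\mathcal{J}_f\Rightarrow\#\mathbb V(f-y)<V(A)$, after which Lemmas~\ref{lem:not-coordinate} and~\ref{lem:solutions-infty1} directly furnish a relevant face.
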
 

\begin{proof}
For the first direction, assume that $y\in\mathcal{J}_f$. Then, $\#\mathbb{V}(f-y)<\mu_f$. Lemma~\ref{lem:not-coordinate} implies that there exists a non-coordinate face $\Gamma\prec A$ such that $ (f-y)_\Gamma = 0 $ has a solution in $\TT$. We can deduce that $\Gamma$ is relevant from Lemma~\ref{lem:solutions-infty1}.

We prove the second direction. Let $y$ be a point outside $\mathcal{J}_f$. Then, Lemma~\ref{lem:gen-non-degree} shows that 
\begin{equation}\label{eq:volume:roots}
V(A) = \#\big(\mathbb{V}(f-y)\setminus\mathbb{V}^\circ(f-y) \big) + \#\mathbb{V}^\circ(f-y).
\end{equation}

Assume that $y\neq f(0,0)$. Then, using the notations of Proposition~\ref{prop:main}, the first summand in~\eqref{eq:volume:roots} is accounted for by $\sum m_\Gamma$, where $\Gamma$ runs over all (at most two) coordinate faces of $A$. Equation~\eqref{eq:sol-numb} shows that for any other faces $\Gamma$, we have $m_\Gamma =0$. Therefore, the system $ (f-y)_\Gamma = 0 $ has no solutions if $\Gamma$ is relevant.

Finally, one can check that $y= f(0,0)\wedge y\notin\mathcal{J}_f\Rightarrow A$ has no relevant faces.
\end{proof}

Recall the set $\mathcal{K}_f$ defined in Section~\ref{subsec:th11}. The following result will be used in Sections~\ref{sec:proof-prop1} --~\ref{sec:proof-prop3}.

\begin{lemma}\label{lem:Jel-discr}
Let $f$ be generically non-proper. Then, we have $y\in \mathcal{K}_f$ if and only if there exists a relevant face $\Gamma\prec A$ and a matrix $U\in\Trg$ such that 
$\ous(f-y) = 0$
has a solution in $\C^*\times\{0\}$ of multiplicity $\geq 2$.
\end{lemma}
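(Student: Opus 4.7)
The plan is to combine Proposition~\ref{prop:main} with the characterization of $\mathcal{J}_f$ from Lemma~\ref{lem:solutions-infty2} to translate the statement "$y \in \mathcal{K}_f$" into a statement about multiplicities of solutions of $\overline{U}(f-y) = 0$ on the toric boundary. The key observation is that multiplicity $\geq 2$ at $(\rho, 0)$ is the algebraic signature of two analytic branches of $f^{-1}(y')$ coalescing at infinity as $y' \to y$, which is precisely the ramification phenomenon encoded by the critical locus $C(f)$ escaping to infinity.

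For the forward direction, I would start with an analytic arc $x(t) \in C(f)$ satisfying $\|x(t)\|\to\infty$ and $f(x(t))\to y$ as $t\to 0$. Since $\mathcal{K}_f \subset \mathcal{J}_f$, Lemma~\ref{lem:solutions-infty2} supplies a relevant $\Gamma\prec A$ with $(f-y)_\Gamma=0$ having a $\TT$-solution, and the direction of escape of $x(t)$ determines a primitive supporting vector $\alpha$ of $\Gamma$. Choosing $U\in\Trg$ and writing $x(t)=z(t)^U$, the arc $z(t)$ converges to a point $(\rho,0)\in\C^*\times\{0\}$ solving $\overline{U}(f-y)=\underline{0}$. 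The critical condition $|\Jac f|(x(t))=0$ then forces at least two analytic branches of $f^{-1}(y')$ near $x(t)$ to collapse into $x(t)$ as $y'\to y$; pushed through the toric substitution, this yields at least two distinct solutions of $\overline{U}(f-y')=0$ converging to $(\rho,0)$, so the local intersection number there is at least $2$. For the reverse direction, I would apply the parametric deformation $\overline{U}(f+t\psi -y)=0$ from the proof of Proposition~\ref{prop:main}: by the equality $m_\rho = N_\rho$ established there, multiplicity $m\geq 2$ at $(\rho,0)$ produces at least two distinct analytic arcs $z^{(i)}(t)\to (\rho,0)$, which lift to arcs $x^{(i)}(t)\in\TT$ escaping to infinity with $f(x^{(i)}(t))\to y$. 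Because these two arcs are asymptotically indistinguishable in the toric boundary but remain distinct, the portion of $f^{-1}(y+t\psi)$ that connects them must contain a zero of $|\Jac f|$, yielding critical points $\xi(t)$ with $\|\xi(t)\|\to\infty$ and $f(\xi(t))\to y$, so $y\in\mathcal{K}_f$.

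The main obstacle is establishing rigorously that multiplicity $\geq 2$ at $(\rho,0)$ is genuinely equivalent to ramification at infinity, rather than being a superficial intersection artifact. The cleanest route is to compare $|\Jac(\overline{U}f)|$ with $|\Jac f|$ after the monomial change of variables: the two Jacobians differ by a monomial factor whose support is controlled by $U$ and by the shifts $r_1,r_2$ used to clear denominators in $\overline{U}f$, so vanishing of the local intersection multiplicity beyond $1$ at $(\rho,0)$ corresponds to a genuine vanishing of $|\Jac f|$ along the lifted arc in $\TT$. The hypothesis of generic non-properness is what keeps this correspondence clean: it ensures via Lemma~\ref{lem:gen-non-degree} that $\mu_f = V(A)$ and, via Lemma~\ref{lem:solutions-infty1}, that no coordinate or non-semi-origin face produces spurious multiplicities, so every excess intersection at $(\rho,0)$ is forced to come from the critical locus. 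Writing out the local change-of-variable identity for $|\Jac|$ and checking that the excess intersection number equals the order of vanishing of $|\Jac f|$ along the branch $x(t)$ is the technical heart of the argument.
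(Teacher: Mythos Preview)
Your approach is essentially the paper's: both directions rest on the observation that a point $x\in C(f)$ is a root of $f-f(x)=0$ of multiplicity $\geq 2$, that toric changes of variables preserve multiplicities (Remark~\ref{rem:toric-preserv}), and that one then tracks such multiple roots to and from the boundary stratum $\C^*\times\{0\}$ via the mechanism of Proposition~\ref{prop:main} together with the curve selection lemma. The paper is terser than you and simply omits the reverse direction as ``similar.''

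One caution on your reverse-direction sketch: the deformation arcs $x^{(i)}(t)$ solve $(f+t\psi)(x)=y$, so they are not preimages under $f$ of a common value, and a Rolle-type assertion that a zero of $|\Jac f|$ lies ``between'' them is not justified (nor does $f(x^{(i)}(t))\to y$ follow, since $t\,\psi(x^{(i)}(t))$ need not vanish in the limit when $x^{(i)}(t)$ escapes). The route you isolate in your final paragraph is the correct one and is what the paper's ``similar'' gestures at: a solution of multiplicity $\geq 2$ at $(\rho,0)$ forces the Jacobian of $\ous f$ to vanish there; the chain rule shows this Jacobian agrees with $\ous|\Jac f|$ up to a monomial factor nonvanishing on $\C^*\times\{0\}$, so a branch of the transformed critical curve passes through $(\rho,0)$, and points on that branch in $\TT$ pull back to critical points of $f$ escaping to infinity with $f$-values tending to $y$.
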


\begin{proof}
Let $y\in\mathcal{K}_f$. Then, there exists a sequence $\{x_k\}_{k\geq 0}\subset C(f)\subset\C^2$ that converges to infinity, and $f(x_k)$ converges to $y$~\cite{Jel93}. In particular, this is a sequence of double roots to $
f - f(x_k) = 0$, converging to infinity.

Note that any toric transformation $U$ preserves the number of solutions in $\TT$ counted with multiplicities (Remark~\ref{rem:toric-preserv}). Therefore, using the curve selection Lemma, we follow closely the proof of Proposition~\ref{prop:main} to deduce the following claim:

There exists a sequence $\{x_k\}_{k\geq 0}\subset\C^2$ of double roots to 
\[
\ous(f - f(x_k)) = 0,
\] converging to a double root $\rho\in\C^*\times\{0\}$ to $\ous(f-y) = 0$. This proves the first direction.

We omit the proof of the other direction since it is similar to the first one. 
\end{proof}

\subsection{Proof of Theorem~\ref{th:family}}\label{subsec:Th3-proof} Recall that each member of the pair $A$ contains $(0,0)$. For $i=1,2$, let $\C[A_i]\cong\C^{|A_i|}$ denote the space of all polynomials $P:\C^2\rightarrow\C$ such that $\supp P \subset A_i$ (a polynomial becomes identified with its coefficients). The space $\C[A_1]\oplus \C[A_2]$ is denoted by $\C[A]$.

\begin{example}\label{ex:map-space}
Let $A$ be as in Figure~\ref{fig:ex1} defined by the pair $
  A_1 =
 \left\lbrace \big(\begin{smallmatrix} 0 \\ 0 \end{smallmatrix}\big),\big(\begin{smallmatrix} 1 \\ 1 \end{smallmatrix}\big), \big(\begin{smallmatrix} 1 \\ 2 \end{smallmatrix}\big)\right\rbrace$,\\
  $A_2=\left\lbrace \big(\begin{smallmatrix} 0 \\ 0 \end{smallmatrix}\big),\big(\begin{smallmatrix} 1 \\ 0 \end{smallmatrix}\big), \big(\begin{smallmatrix} 1 \\ 1 \end{smallmatrix}\big)\right\rbrace$. Then, any complex map 
\begin{equation}\label{eq:map:ex1}
  \begin{array}{@{}ccccl@{}}
   (u,v) & \mapsto & (a_0 +a_1uv + a_2uv^2,~b_0 +b_1u+b_2uv),
  \end{array}
\end{equation} is identified with $(a_0,a_1,a_2,b_0,b_1,b_2)\in\C[A]\cong\C^6$.
\end{example}

\begin{figure}[h]
\centering
\begin{tikzpicture}
    \tikzstyle{bluefill1} = [fill=blue!20,fill opacity=0.8]          
    \tikzstyle{blackfill1} = [fill=gray,fill opacity=0.4]          
    \tikzstyle{greyfill1} = [fill=gray!20,fill opacity=0.8]          
    \tikzstyle{conefill} = [pattern = north east lines, pattern color=gray]          
    \tikzstyle{ann} = [fill=white,font=\footnotesize,inner sep=1pt] 
    \tikzstyle{ghostfill} = [fill=white]	
    \tikzstyle{ghostdraw} = [draw=black!50]					
	\tikzstyle{ann1} = [font=\footnotesize,inner sep=1pt] 
	\tikzstyle{ann2} = [font=\normalsize,inner sep=1pt] 


\begin{scope}[xshift = -5cm, yshift = 0cm, xscale=1.2, yscale=1.2]


	\draw[arrows=->,line width=0.3 mm, dotted] (0,0)-- (0,1.3); 
	
	\draw[arrows=->,line width=0.3 mm, dotted] (0,0)-- (1.3,0); 


	\filldraw[bluefill1,line width=0.0 mm ] (0,0)--(0.5,0.5)--(0.5,1)-- (0,0);	

		\draw [fill, color = blue] (0,0) circle [radius=0.04];
		\draw [fill, color = blue] (0.5,0.5) circle [radius=0.04];
		\draw [fill, color = blue] (0.5,1) circle [radius=0.04];

 \end{scope}	
	

\begin{scope}[xshift = 0cm, yshift = 0cm, xscale=1.2, yscale=1.2]

	\draw[arrows=->,line width=0.3 mm, dotted] (0,0)-- (0,1.3); 
	
	\draw[arrows=->,line width=0.3 mm, dotted] (0,0)-- (1.3,0); 


	\filldraw[blackfill1,line width=0.0 mm ] (0,0)--(0.5,0)--(0.5,0.5)-- (0,0);	

		\draw [fill, color = black] (0,0) circle [radius=0.04];
		\draw [fill, color = black] (0.5,0) circle [radius=0.04];
		\draw [fill, color = black] (0.5,0.5) circle [radius=0.04];
\end{scope}	
\end{tikzpicture}
\caption{Support of the map in Example~\ref{ex:map-space}}\label{fig:ex1}
\end{figure}
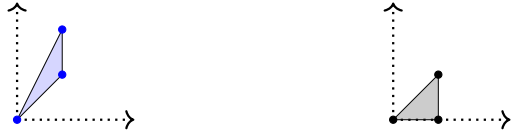

\begin{proposition}\label{prop:open-dense}
Let $A$ be an independent pair of finite subsets in $\N^2$. If $A$ has no relevant faces, then proper maps form a dense subset in $\C[A]$, and no map in $\C[A]$ is generically non-proper. Otherwise, all maps in $\C[A]$ are non-proper, and generically non-proper maps form a dense subset of $\C[A]$.
\end{proposition}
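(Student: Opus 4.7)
The plan is to split on whether $A$ admits a relevant face. First I would identify the Zariski-open, dense subset $\Omega \subset \C[A]$ consisting of maps satisfying all conditions of Definition~\ref{def:gen-prop} \emph{except} possibly non-properness: namely (i) $f(0,0) \in \TT$, (ii) $|\Jac f| \not\equiv 0$ (so $f$ is dominant), and (iii) each of the three systems in Definition~\ref{def:gen-prop} attains its Bernstein bound. Density follows because (i) is the non-vanishing of two coefficient polynomials, (ii) is non-empty by independence of $A$ (Remark~\ref{rem:dom-indep}), and (iii) is Bernstein's Theorem~A applied to each system.

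Suppose $A$ has no relevant face. Then every $f \in \Omega$ must be proper: were $f \in \Omega$ non-proper, it would satisfy every clause of Definition~\ref{def:gen-prop} and so be generically non-proper, and Lemma~\ref{lem:solutions-infty2} applied to a point $y \in \mathcal{J}_f$ would exhibit a relevant face of $A$ — a contradiction. Hence proper maps are dense. The same argument shows that no $f \in \C[A]$ can be generically non-proper, since existence of one would again force a relevant face via Lemma~\ref{lem:solutions-infty2}.

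Now suppose $A$ has a relevant face $\Gamma$. I would first verify that $\Gamma$ admits a primitive supporting vector $\alpha$ with $\alpha_j<0$ for some $j$: the coordinate case is precisely when the extremal rays of the normal cone of $\Gamma$ are the non-negative coordinate axes $(1,0)$ and $(0,1)$, so being semi-origin, non-coordinate, and lying in $A\subset\N^2$ together force at least one extremal ray to enter a half-plane $\{\alpha_j<0\}$. Given such $\alpha$, for any $f \in \Omega$ I substitute $x(t) = (a_1 t^{\alpha_1}, a_2 t^{\alpha_2})$ with $a \in \TT$. When $\Gamma$ is origin, every monomial $c_w a^w t^{\alpha\cdot w}$ with $w \notin \Gamma_i$ has $\alpha\cdot w>0$ and vanishes as $t \to 0^+$, so $f(x(t)) \to f_\Gamma(a)$ while $\|x(t)\| \to \infty$, forcing $f_\Gamma(a) \in \mathcal{J}_f$; when $\Gamma$ is half-origin I first choose $a$ so that the divergent leading term on the non-origin side is killed (possible in $\TT$ because the restricted polynomial is non-monomial for generic coefficients) and then run the Puiseux iteration in the spirit of the proof of Proposition~\ref{prop:main} to obtain a convergent limit $y \in \mathcal{J}_f$. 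Combining this non-properness with (i)-(iii) shows $\Omega$ is contained in the generically non-proper locus, giving the required density.

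The main obstacle is upgrading the conclusion of the previous step from ``every $f \in \Omega$'' to ``every $f \in \C[A]$,'' as the proposition demands: when $f$ has enough vanishing coefficients that some $f_{i,\Gamma}$ becomes a pure monomial or vanishes identically, no $a\in\TT$ annihilates the would-be divergent leading term, and one must either descend to an appropriate subface of $\supp f$ or run the Puiseux iteration with nontrivial first-order corrections from the outset. This degenerate regime is where the full bookkeeping of Proposition~\ref{prop:main} does the heavy lifting and is the most delicate part of the proof.
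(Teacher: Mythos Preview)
Your treatment of the ``no relevant faces'' half is correct and coincides with the paper's: both reduce the impossibility of generic non-properness to Lemma~\ref{lem:solutions-infty2}, and both obtain density of proper maps by confining the non-proper ones to a Zariski-closed locus.

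Where you diverge from the paper is in the ``relevant face exists'' half, specifically in proving that \emph{every} $f\in\C[A]$ is non-proper.  You build a Puiseux path $x(t)=(a_1t^{\alpha_1},a_2t^{\alpha_2})$ and then, as you correctly flag, run into trouble when coefficients degenerate.  The paper avoids this entirely with a one-line observation: the existence of a relevant face forces $f^{-1}(f(0,0))$ to contain a whole coordinate axis of $\C^2$, for any $f\in\C[A]$.  The combinatorial reason is that a relevant face has a supporting vector $\alpha$ with $\alpha_1\alpha_2<0$ and contains $(0,0)$ in (at least) one member; this prevents both $A_1$ and $A_2$ from meeting one of the positive coordinate half-axes, so both $f_1$ and $f_2$ are constant along the corresponding axis.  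No genericity, no Puiseux iteration, and the obstacle you identified simply disappears.  Once universal non-properness is in hand, density of the generically non-proper locus follows exactly as you outline via conditions (i)--(iii); the paper packages (iii) through the Zariski-closed sets $\Res_\Gamma(A)$ and $\mathcal{S}_\Lambda=\{f:(f_i,|\Jac f|)\in\Res_\Lambda\}$, which is the precise form of your ``Bernstein's Theorem~A applied to each system'' (note that the coefficients of $|\Jac f|$ are not free, so one really needs to pull the resultant locus back along $f\mapsto(f_i,|\Jac f|)$ rather than invoke Theorem~A directly).
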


\begin{proof}
For any face $\Gamma\prec A$, let $\Res_\Gamma(A)$ denote the \emph{multivariate resultant of $\Gamma$} in $\C[A]$. This is, the subset of all maps $f\in \C[A]$ such that $f_\Gamma = 0$ has a solution in $\TT$. Multivariate resultants were introduced in~\cite{GKZ94}, and it was shown in~\cite[Theorem 1.1]{St94} that $\Res_\Gamma(A)$ is a Zariski closed subset of $\C[A]$.

For any $f\in \C[A]$ and any $y\in\mathcal{J}_f$, we have $\#\mathbb{V}(f-y)< V(A)$. Then, Lemma~\ref{lem:not-coordinate} shows that there exists a non-coordinate face $\Gamma\prec A$ such that $(f-y)_\Gamma = 0$ has a solution in $\TT$.

If $A$ has no relevant faces, then either $\mathcal{J}_f = \emptyset$, or $(f-y)_\Gamma = f_\Gamma = 0$. In the firt case, $f$ is proper (in particular, not generically non-proper). In the second case, we have $f\in\Res_\Gamma(A)$ and Lemma~\ref{lem:solutions-infty1} shows that $f$ is not generically non-proper.

Assume that $A$ has a relevant face $\Gamma$. Then, for any $f\in\C[A]$, the set $f^{-1}(f(0,0))$ contains one of the two coordinate axes of $\C^2$. We deduce that any $f$ is non-proper. 

To show that generically non-proper maps form a dense subset of $\C[A]$, we proceed as follows.

Let $|\Jac f|$ denote the determinant of the Jacobian matrix of $f$ and let $\Sigma$ denote its support. Clearly, we have $B:=(A_1,\Sigma)$ is independent of $f$ as long as $f$ is generically chosen. 

The coefficients of $|\Jac f|$ are polynomials in the coefficients of $f$. Moreover, recall that for any face $\Lambda\prec B$, the set $\Res_\Lambda(B)$ is a Zariski closed subset of $\C[B]$. Therefore, the set 
\[
\mathcal{S}_\Lambda:=\left\lbrace (f_1,f_2)\in\C[A]~|~(f_1,|\Jac f|)\in \Res_\Lambda(B)\right\rbrace
\] is a Zariski closed subset of $\C[A]$. Note that the same holds true if we replace $B$ by $(A_2,\Sigma)$.

Finally, we conclude from the definition of generic non-properness (Definition~\ref{def:gen-prop}) and Proposition~\ref{prop:main} that it enough for $f\in\C[A]$ to be outside all subsets $\Res_\Gamma(A)$ and $\mathcal{S}_\Lambda$ for it to be generically non-proper.
\end{proof}

Consider the pair $A$ in Figure~\ref{fig:ex1} corresponding to the map $f$ in Example~\ref{ex:map-space}. For any choice of coefficients, $f$ is non-proper since $\{u=0\}\subset f^{-1}(a_0,b_0)$. Indeed, $A$ has a relevant face; it is supported by $(2,-1)$. 

If $(\gamma,\delta)\prec A$ is supported by the vector $(-1,0)$, then, $f_{(\gamma,\delta)}(u,v) = 0$ has solutions in $\TT$ if and only if $a_1b_2 - b_1a_2 = 0$. Therefore, the map is generically non-proper inside the dense subset $\{a_1b_2 - b_1a_2 \neq 0\}$ of $\C^6$.
\section{Long faces and proof of Proposition~\ref{prop:S0f}}\label{sec:proof-prop1} In this section, we prove  Proposition~\ref{prop:S0f}. To do this, we need a few extra technical results. These are gathered in Section~\ref{subs:techn}.

\subsection{Lengths of faces}\label{subs:techn} In this part, we will introduce two invariants for faces of finite pairs in $\Z^2$: \emph{integer length} and \emph{dimensional length}.

\subsubsection{Integer length}\label{subsub:integ-leng} For any set $\sigma\subset \R^2$, such that $\dim \conv(\sigma) = 1$, we use $\ell(\sigma)$ to denote the number $|\conv(\sigma)\cap \Z^2| - 1$, where $\conv(\cdot)$ denotes the convex hull of any set in $\R^n$. 
\begin{lemma}\label{lem:mixed-segments}
Let $S_1,S_2\subset\R^2$ be two bounded segments having rational slopes. Then, we have 
\[
\ell(S_1)\cdot\ell(S_2)\leq V(S_1,S_2).
\]
\end{lemma}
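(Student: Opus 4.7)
\smallskip

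\noindent\emph{Proof plan.} The plan is to reduce both sides of the inequality to explicit expressions in the integer direction vectors of $S_1$ and $S_2$, and then observe that a $2\times 2$ integer determinant of non-parallel primitive vectors has absolute value at least $1$. Since each $S_i$ has rational slope, I would write $S_i = p_i + [0,\ell_i]\, d_i$ where $d_i\in\Z^2$ is the primitive integer direction vector of $S_i$ and $\ell_i = \ell(S_i)$ is its integer length, which by definition of $\ell$ counts the segments between consecutive lattice points of $\conv(S_i)$.

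Assuming first that $S_1$ and $S_2$ are not parallel (the only case where the claim has content, since otherwise $V(S_1,S_2)=0$ by Section~\ref{subs:mixed}), the Minkowski sum $S_1+S_2$ is a parallelogram with edge vectors $\ell_1 d_1$ and $\ell_2 d_2$. Its $2$-dimensional Lebesgue measure is
\[
 \Vol(S_1+S_2) \;=\; \bigl|\det(\ell_1 d_1,\ell_2 d_2)\bigr| \;=\; \ell_1\ell_2\,|\det(d_1,d_2)|.
\]
Each segment $S_i$ has vanishing planar volume, so the identity $V(\Delta_1,\Delta_2)=\Vol(\Delta_1+\Delta_2)-\Vol(\Delta_1)-\Vol(\Delta_2)$ recalled in Section~\ref{subs:mixed} collapses to $V(S_1,S_2)=\ell_1\ell_2\,|\det(d_1,d_2)|$.

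Because $d_1,d_2$ are primitive integer vectors that are linearly independent, $\det(d_1,d_2)$ is a nonzero integer and hence $|\det(d_1,d_2)|\geq 1$. This gives $V(S_1,S_2)\geq \ell_1\ell_2 = \ell(S_1)\cdot\ell(S_2)$, as required. The main point to watch is the mixed-volume normalization used in the paper, namely $V(\Delta,\Delta)=2\Vol(\Delta)$: under this convention the area of the parallelogram plugs directly into the formula for $V$ without any factor of $\tfrac12$, which is precisely what makes the determinantal lower bound yield the inequality as stated rather than its halved version.
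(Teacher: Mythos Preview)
Your proof is correct and follows the same route as the paper's: both use $V(S_1,S_2)=\Vol(S_1+S_2)$ (since each $\Vol(S_i)=0$), identify this area as $\ell(S_1)\ell(S_2)\,|\det(d_1,d_2)|$, and conclude via $|\det(d_1,d_2)|\ge 1$; the only cosmetic difference is that the paper obtains this product by subdividing $S_1+S_2$ into $\ell(S_1)\ell(S_2)$ congruent parallelograms of area $|\det(d_1,d_2)|$ each, whereas you take the determinant of the full edge vectors in one step. One wording caveat: in the parallel case the inequality is not ``contentless'' but actually false (e.g.\ $\ell_1=\ell_2=1$ gives $1\le 0$), so non-parallelism should be read as an implicit hypothesis---the paper's own proof makes the same tacit assumption when it asserts $|\det(\alpha,\beta)|\in\N^*$.
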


\begin{proof}
From $\dim S_1 = \dim S_2=1$, we have $\Vol(S_1)=\Vol(S_2)=0$, and thus $V(S_1,S_2) = \Vol(S_1+S_2)$. Let $L_\sigma(S_2)$ be the union of lines in $\R^2$, parallel to $S_1$, and intersecting $S_2$ at $S_2\cap\N^2$ and $L_\theta(S_1)$ be the analogous union of lines, but with $S_1$ and $S_2$ switched. Then, the set $L_\theta(S_1)\cup L_\sigma(S_2)$ subdivides $S_1+S_2$ into $\ell(S_1)\cdot\ell (S_2)$ parallelograms $P_{ij}$, $i=1,\ldots,\ell(S_1)$, $j=1,\ldots,\ell(S_2)$, from which we obtain \[\Vol(S_1+S_2) = \sum_{i,j}\Vol(P_{ij}).\] From the construction of $P_{ij}$, for any $i,j$ we get $\Vol P_{ij} =|\det (\alpha,\beta)|$, where $\alpha$ and $\beta$ are the directional vectors for $S_1$ and $S_2$ respectively. The result follows from $|\det (\alpha,\beta)|\in \N^*$.
\end{proof} Let $f:\C^2\to\C^2$ be a dominant polynomial map, and let $A$ denote the support of $f$ (See Section~\ref{subs:supp-roots}). Recall that, for each face $\Gamma\prec A$, $\dim\Gamma = 1$, we have $\Trg$ refers to the space of transformations in $\SL(2,\Z)$, constructed using $\Gamma$ as in Section~\ref{subsub:base-change-faces}. 
\begin{lemma}\label{lem:degree-length}
Let $\Gamma\prec A$ be a relevant face (see Definition~\ref{def:face-non-prop}), and let $U$ be a matrix in $\Trg$. Assume that for some $i\in\{1,2\}$, there exists a subset $\sigma\subset UA_i$ such that $
U(f_i)_\sigma$ is a polynomial $P$ in $z_1$, up to monomial multiplication, with $P(0)=0$. Then, we have 
\begin{equation}\label{eq:degree-length}
\deg P = \ell(\xi)\leq \deg f_i/2,
\end{equation} where $\xi\subset A_i$ and $\sigma = U\xi$.
\end{lemma}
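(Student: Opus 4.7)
The plan is to treat separately the equality $\deg P=\ell(\xi)$ and the inequality $\ell(\xi)\leq \deg f_i/2$.

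For the equality, the hypothesis that $U(f_i)_\sigma$ reduces to a polynomial in $z_1$ after monomial multiplication forces every point of $\sigma$ to share the same $z_2$-coordinate, so $\sigma$ is a segment of $\Z^2$ along the $z_1$-axis direction. Consequently $\xi=U^{-1}\sigma$ lies on a line in $\Z^2$ with direction $U^{-1}(1,0)^{\Tr}=\tilde{e}_1$, the primitive vector spanning $\Gamma_1+\Gamma_2$ used to build $U$ in Section~\ref{subsub:base-change-faces}. Since $U\in\SL(2,\Z)$ is unimodular, it preserves the integer-length invariant, so $\ell(\xi)=\ell(\sigma)$; and $\ell(\sigma)$ equals the span of the $z_1$-exponents of the terms supported on $\sigma$, which under the normalization yielding $P(0)=0$ matches $\deg P$.

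For the inequality, I would write $\tilde{e}_1=(v_1,w_1)$ with $v_1>0$ by construction and $\gcd(v_1,w_1)=1$, and denote by $p$ and $q=p+\ell(\xi)\tilde{e}_1$ the two extreme lattice points of $\conv(\xi)$. Both lie in $A_i\subset\N^2\cap\{a_1+a_2\leq\deg f_i\}$. I would split into cases on the sign of $w_1$. If $w_1\geq 1$, then $v_1+w_1\geq 2$ and
\[
\deg f_i\geq q_1+q_2=p_1+p_2+\ell(\xi)(v_1+w_1)\geq \ell(\xi)(v_1+w_1)\geq 2\ell(\xi).
\]
If $w_1\leq -1$, the condition $q_2=p_2+\ell(\xi)w_1\geq 0$ forces $p_2\geq \ell(\xi)|w_1|$; so $|w_1|\geq 2$ immediately gives $\deg f_i\geq p_2\geq 2\ell(\xi)$, and $|w_1|=1$ with $v_1\geq 2$ yields
\[
\deg f_i\geq q_1+q_2=p_1+p_2+\ell(\xi)(v_1-1)\geq \ell(\xi)+\ell(\xi)(v_1-1)=\ell(\xi)v_1\geq 2\ell(\xi).
\]

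The hard part is disposing of the two corner subcases $\tilde{e}_1\in\{(1,0),(1,-1)\}$, which is where the relevance hypothesis is essential. In the case $\tilde{e}_1=(1,0)$, the supporting vector of $\Gamma$ must be $(0,\pm 1)$: the choice $(0,1)$ makes $\Gamma$ a coordinate face (in the sense of Definition~\ref{def:face-non-prop}) and so not relevant, while $(0,-1)$ selects the face of $A_1+A_2$ of maximal second coordinate, which omits $(0,0)$ whenever $A$ is independent (forced by $f$ dominant, cf.\ Remark~\ref{rem:dom-indep}), contradicting semi-origin. In the case $\tilde{e}_1=(1,-1)$, the supporting vector is $\pm(1,1)$: the choice $(1,1)$ attains its minimum on $A_1+A_2\subset\N^2$ uniquely at $(0,0)$, so $\Gamma_1+\Gamma_2=\{(0,0)\}$ and $\dim\Gamma=0$, contradicting the existence of $U\in\Trg$ (which requires $\dim\Gamma=1$); the opposite choice $(-1,-1)$ selects the face of maximal coordinate sum, again omitting $(0,0)$ and violating semi-origin. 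Either way, relevance of $\Gamma$ rules out both corner subcases, completing the exhaustion and yielding the bound.
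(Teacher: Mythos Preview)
Your approach coincides with the paper's: first identify $\deg P=\ell(\sigma)=\ell(\xi)$ via the unimodularity of $U$, then argue that relevance of $\Gamma$ forces the direction $\tilde e_1$ of $\conv(\xi)$ to be neither horizontal nor vertical, from which the bound $\ell(\xi)\le\deg f_i/2$ follows by elementary lattice geometry. The paper compresses your entire case analysis into the single sentence ``$\Gamma$ being relevant implies that $\conv(\xi)$ is not horizontal, nor vertical; one can check (Figure~\ref{fig:lemma:deg-length}) that $|\conv(\xi)\cap A_i|\le\deg f_i/2$'', so your write-up is a faithful expansion of that.

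There is, however, a genuine gap in your treatment of the corner case $\tilde e_1=(1,0)$ with supporting vector $(0,-1)$. You argue that the face of $A_1+A_2$ of maximal second coordinate omits $(0,0)$, ``contradicting semi-origin''. But semi-origin only asks that $(0,0)\in\Gamma_1$ \emph{or} $(0,0)\in\Gamma_2$, whereas $(0,0)\in\Gamma_1+\Gamma_2$ would require \emph{both} (since everything sits in $\N^2$). So your deduction only rules out the \emph{origin} case, not the half-origin one. Concretely, if $A_1\subset\{x_2=0\}$ (so $f_1\in\C[x_1]$) and the top face of $A_2$ is one-dimensional, the face supported by $(0,-1)$ is semi-origin, not coordinate, and has no member equal to a nonzero point---hence relevant by Definition~\ref{def:face-non-prop}---yet $\conv(\xi)$ is horizontal. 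The symmetric remark applies to $\alpha=(-1,0)$. In other words, the assertion ``relevant $\Rightarrow$ $\alpha_1\alpha_2<0$'' (which the paper itself invokes without proof in \S\ref{subsub:dimen-leng}) tacitly excludes the degenerate situation where one of the $A_i$ lies on a coordinate axis. Your argument is correct once that degeneracy is set aside, and in the paper's applications (generically non-proper maps with both $A_i$ two-dimensional) it does not arise; but as written the corner-case exclusion is incomplete.
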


\begin{proof}
The assumptions of the Lemma imply that both $\conv(\sigma)$ and $\conv(\xi)$ have dimension one, and that $\ell(\sigma) = \ell(\xi)$.

To show that the equality part of~\eqref{eq:degree-length}, it suffices to notice that $\deg P = \ell(\sigma)$.

Finally, $\Gamma$ being relevant implies that $\conv(\xi)$ is not horizontal, nor vertical. One can check (Figure~\ref{fig:lemma:deg-length}) that
\[
|\conv(\xi)\cap A_i|\leq\deg f_i/2.
\] 
This finishes the inequality part of~\eqref{eq:degree-length}. 
\end{proof} 

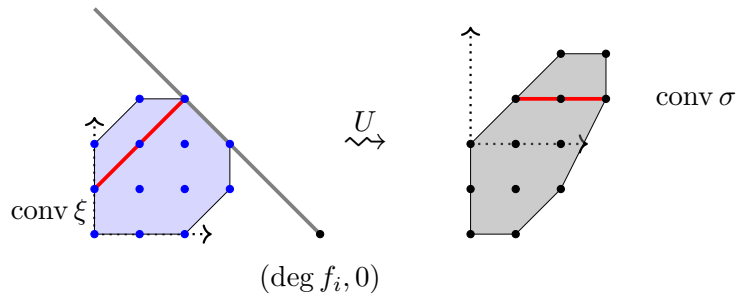
\begin{figure}[h]
\centering
\begin{tikzpicture}
    \tikzstyle{bluefill1} = [fill=blue!20,fill opacity=0.8]          
    \tikzstyle{blackfill1} = [fill=gray,fill opacity=0.4]          
    \tikzstyle{greyfill1} = [fill=gray!20,fill opacity=0.8]          
    \tikzstyle{conefill} = [pattern = north east lines, pattern color=gray]          
    \tikzstyle{ann} = [fill=white,font=\footnotesize,inner sep=1pt] 
    \tikzstyle{ghostfill} = [fill=white]	
    \tikzstyle{ghostdraw} = [draw=black!50]					
	\tikzstyle{ann1} = [font=\footnotesize,inner sep=1pt] 
	\tikzstyle{ann2} = [font=\normalsize,inner sep=1pt] 


\begin{scope}[xshift = -5cm, yshift = 0cm, xscale=1.2, yscale=1.2]


	\draw[arrows=->,line width=0.3 mm, dotted] (0,0)-- (0,1.3); 
	
	\draw[arrows=->,line width=0.3 mm, dotted] (0,0)-- (1.3,0); 


	\filldraw[bluefill1,line width=0.0 mm ](0,0)--(1,0)--(1.5,0.5)
										-- (1.5,1) -- (1,1.5) -- (0.5,1.5)--(0,1)-- (0,0) ;	
										
	\draw[ line width=0.5 mm, color = gray] (2.5,0) -- (0, 2.5); 
	\draw[ line width=0.5 mm, color = red] (0,0.5) -- (1, 1.5); 	
		\draw [fill, color = blue] (0,0) circle [radius=0.04];
		\draw [fill, color = blue] (0,0.5) circle [radius=0.04];
		\draw [fill, color = blue] (0.5,0.5) circle [radius=0.04];
		\draw [fill, color = blue] (0.5,0) circle [radius=0.04];
		\draw [fill, color = blue] (1,0) circle [radius=0.04];
		\draw [fill, color = blue] (0,1) circle [radius=0.04];
		\draw [fill, color = blue] (1,1) circle [radius=0.04];
		\draw [fill, color = blue] (1,0.5) circle [radius=0.04];
		\draw [fill, color = blue] (0.5,1) circle [radius=0.04];
		\draw [fill, color = blue] (0.5,1.5) circle [radius=0.04];
		\draw [fill, color = blue] (1,1.5) circle [radius=0.04];
		\draw [fill, color = blue] (1.5,0.5) circle [radius=0.04];
		\draw [fill, color = blue] (1.5,1) circle [radius=0.04];
		\draw [fill] (2.5,0) circle [radius=0.04];		

	\node[ann2] at (2.5,-0.5)		{$(\deg f_i,0)$};
	\node[ann2] at (-0.5,0.25)		{$\conv \xi$};			
	\node[ann2] at (3,1.25)		{$U$};			
	\node[ann2] at (3,1)		{{\LARGE $\rightsquigarrow$}};

 \end{scope}	
	

\begin{scope}[xshift = 0cm, yshift = 0cm, xscale=1.2, yscale=1.2]


	\draw[arrows=->,line width=0.3 mm, dotted] (0,1)-- (0,2.3); 
	
	\draw[arrows=->,line width=0.3 mm, dotted] (0,1)-- (1.3,1); 
	

	\filldraw[blackfill1,line width=0.0 mm ](0,0)--(0.5,0)--(1,0.5)
										-- (1.5,1.5) -- (1.5,2) -- (1,2)--(0,1)-- (0,0) ;	



	\draw[ line width=0.5 mm, color = red] (0.5,1.5) -- (1.5, 1.5); 	
		\draw [fill, color = black] (0,0) circle [radius=0.04];
		\draw [fill, color = black] (0,0.5) circle [radius=0.04];
		\draw [fill, color = black] (0,1) circle [radius=0.04];
		\draw [fill, color = black] (0.5,0) circle [radius=0.04];
		\draw [fill, color = black] (0.5,0.5) circle [radius=0.04];
		\draw [fill, color = black] (0.5,1) circle [radius=0.04];
		\draw [fill, color = black] (0.5,1.5) circle [radius=0.04];
		\draw [fill, color = black] (1,0.5) circle [radius=0.04];
		\draw [fill, color = black] (1,1) circle [radius=0.04];
		\draw [fill, color = black] (1,1.5) circle [radius=0.04];
		\draw [fill, color = black] (1,2) circle [radius=0.04];
		\draw [fill, color = black] (1.5,1.5) circle [radius=0.04];
		\draw [fill, color = black] (1.5,2) circle [radius=0.04];

		\node[ann2] at (2.5,1.5)		{$\conv \sigma$};

\end{scope}	
\end{tikzpicture}
\caption{An instance of Lemma~\ref{lem:degree-length}.}\label{fig:lemma:deg-length}
\end{figure}

\subsubsection{Dimensional length}\label{subsub:dimen-leng} We also have the following refinement for types of faces of $A$. We say that a face $\Gamma\prec A$ is \emph{long} if $\dim\Gamma_1 =\dim\Gamma_2 = 1$ and \emph{short} otherwise.

Recall that a supporting vector $\alpha=(\alpha_1,\alpha_2)\in\R^2$ of a relevant face of $A$ satisfies $\alpha_1\alpha_2<0$. If $\alpha_1>0$, we say that it is a \emph{left} relevant face and it is a \emph{right} one otherwise.

In Figure~\ref{fig:tuples+basis} to the left, faces $(a,\blacksquare)$ and $(b,\blacksquare)$ of the pair of sets appearing are left and are short. The face $(\gamma,\delta)$, is right and long.

\begin{lemma}\label{lem:le-ri-lo-sh(1)}
Assume that the map $f$ is generically non-proper. Then, its support $A$ has at most one right/left long relevant face.
\end{lemma}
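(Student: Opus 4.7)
The plan is to argue by contradiction and handle the ``left'' statement in detail; the ``right'' statement will follow by the reflection $(x_1,x_2)\mapsto(x_2,x_1)$, which swaps left and right supports while preserving all hypotheses. Suppose $A$ admits two distinct long left relevant faces $\Gamma=(\Gamma_1,\Gamma_2)$ and $\Gamma'=(\Gamma'_1,\Gamma'_2)$ with non-parallel supporting vectors $\alpha=(a,-b)$ and $\alpha'=(a',-b')$, where $a,b,a',b'>0$. Because generic non-properness requires $f(0,0)\in\TT$ (Definition~\ref{def:gen-prop}), we have $(0,0)\in A_1\cap A_2$, and longness of the two faces forces each of $\Gamma_1,\Gamma_2,\Gamma'_1,\Gamma'_2$ to be an edge of $\conv(A_i)$ for the appropriate $i$. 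Since distinct edges of a polytope carry non-parallel supports, $\Gamma_1\neq\Gamma'_1$ and $\Gamma_2\neq\Gamma'_2$ as edges.

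The key geometric observation I would record first is that $(0,0)$ has at most one left-supported incident edge in $\conv(A_i)$. The reason is that the normal cone of $(0,0)$ in $\conv(A_i)\subset\N^2$ necessarily contains the entire closed first quadrant of supporting vectors---any $\alpha$ with non-negative entries satisfies $\alpha\cdot x\ge 0$ on $\N^2$---so the two supports of the edges at $(0,0)$ straddle the angular arc $[0,\pi/2]$, and only the ``lower'' one can fall into the open fourth quadrant.

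With this in hand I would use the semi-origin condition to split into cases according to which member of each of $\Gamma,\Gamma'$ contains the origin. The easy cases are those in which both faces place $(0,0)$ in members of the same index: if $(0,0)\in\Gamma_1\cap\Gamma'_1$ (or symmetrically $(0,0)\in\Gamma_2\cap\Gamma'_2$), one obtains two distinct left edges of the same $A_i$ incident to $(0,0)$, directly contradicting the observation.

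The main obstacle is the ``crossed'' case $(0,0)\in\Gamma_1\setminus\Gamma_2$ together with $(0,0)\in\Gamma'_2\setminus\Gamma'_1$ (the other crossed variant is symmetric). Here $(0,0)$ lying on the $\alpha$-face of $A_1$ forces the minimum of $\alpha\cdot x$ on $A_1$ to be $0$, hence $A_1\subset\{x_2\le (a/b)x_1\}$; symmetrically $A_2\subset\{x_2\le (a'/b')x_1\}$. Conversely, $\Gamma_2$ is a non-origin $\alpha$-face of $A_2$, so the minimum of $\alpha\cdot x$ on $A_2$ is strictly negative, which produces a point of $A_2$ with $x_2>(a/b)x_1$; combined with the wedge containment for $A_2$ this forces $a/b<a'/b'$. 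The mirror reasoning applied to $\Gamma'_1\subset A_1$ yields $a'/b'<a/b$, and the two inequalities contradict each other, completing the proof.
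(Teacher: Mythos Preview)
Your proof is correct. The paper's argument uses the same normal-cone idea as your key observation but is considerably terser: it fixes one long right relevant face $\Gamma$ with $(0,0)\in\Gamma_1$, notes that $(\alpha_1,\alpha_2+q)$ supports only the vertex $\{(0,0)\}$ in $A_1$ for every $q>0$, and declares that any other right relevant face must therefore be short. Read literally, this handles only the supporting directions lying between $\alpha$ and the positive $x_2$-axis, which corresponds exactly to your easy case $(0,0)\in\Gamma_1\cap\Gamma'_1$; making it cover your crossed case requires reapplying the same observation to the pair $(\Gamma',A_2)$ and then comparing the two support angles, a step the paper leaves implicit.

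Your slope-comparison treatment of the crossed case is a more explicit alternative: rather than iterate the normal-cone argument, you extract the contradictory inequalities $a/b<a'/b'$ and $a'/b'<a/b$ directly from the wedge containments $A_i\subset\{x_2\le(\cdot)\,x_1\}$ forced by the origin lying on one face and off the other. The two routes are equivalent in content, but yours leaves nothing for the reader to supply.
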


\begin{proof}
Assume without loss of generality that $\Gamma$ is right, and that the first member $\Gamma_1\subset A_1$ of $\Gamma$ contains $(0,0)$ (see Definition~\ref{def:face-non-prop}). If $\alpha=(\alpha_1,\alpha_2)\in\Q^2$ denotes the supporting vector of $\Gamma_1$, then $(\alpha_1,\alpha_2+q)$ supports the vertex $(0,0)\in A_1$ for any $q>0$. This implies that any other right bad face of $A$ has to be a short one. 
\end{proof}

\subsection{Proof of Proposition~\ref{prop:S0f}}\label{subs:proof-S0} Assume in what follows that $f$ is generically non-proper. We will use $\mathcal{I}_\emptyset:= \Em$ to denote the set of isolated points in $\C^2\setminus f(\C^2)$, and recall the sets $\mathcal{C}^\circ:=\mathcal{C}^\circ_f$ and $\mathcal{K}:=\mathcal{K}_f$ defined in Section~\ref{subsec:th11} for dominant maps $f:\C^2\to\C^2$. 

 We start by proving the following Lemma.

\begin{lemma}\label{lem:simple-transf-sol}
For any $q\in\mathcal{I}_\emptyset\cap\mathcal{C}^\circ\setminus\mathcal{K}$, one of the following claims holds true.

\begin{itemize}
	\item There exists a long origin relevant face $\Gamma\prec A$ and a matrix $U\in\Trg$ such that 
	\[
	\ous(f-q)_\Gamma=0
	\] has $\mu_f$ distinct solutions in $\C^*$.
	
	\item There exist two long relevant faces $\Gamma(1),\Gamma(2)\prec A$, together with two matrices $U_1\in\mathsf{TM}_{\Gamma(1)}(\Z^2)$ and $U_2\in \mathsf{TM}_{\Gamma(2)}(\Z^2)$, such that
\[
\overline{U}_i(f - q)_{\Gamma(i)} = 0
\] has $N_i$ solutions $(i=1,2)$ in $\C^*$, and $N_1 +N_2 = \mu_f$.
\end{itemize}

\end{lemma}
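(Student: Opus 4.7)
The plan is to apply Proposition~\ref{prop:main} to $\varphi = f - q$. Since $q \in \mathcal{I}_\emptyset$ is missing, $\mathbb{V}(f - q) = \emptyset$ and in particular $\#\mathbb{V}^\circ(f - q) = 0$. Together with $\mu_f = V(A)$ from Lemma~\ref{lem:gen-non-degree}, the formula in Proposition~\ref{prop:main} reduces to
\[
\sum_{\Gamma \in \mathcal{S}} m_\Gamma \;=\; \mu_f,
\]
and the task becomes a classification of which faces $\Gamma \prec A$ can contribute a positive $m_\Gamma$ to this sum.

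First I would rule out every face that is not both long and relevant. Non-semi-origin faces are excluded by Lemma~\ref{lem:solutions-infty1} (via Remark~\ref{rem:one-var}). For coordinate faces, the supporting vector $\alpha$ has only non-negative entries, so the associated Puiseux solution $x(t)$ in the proof of Proposition~\ref{prop:main} would converge to some $x(0) \in \C^2$, yielding a solution of $f - q = 0$ in $\C^2$ — impossible for a missing $q$. Finally, a short relevant face has, by the definition of \emph{relevant}, a member equal to $\{(0,0)\}$; its restriction $(f_i - q_i)_\Gamma$ is then the constant $f_i(0,0) - q_i$, which is nonzero because $q \in \mathcal{C}_f^\circ$ forces $q_i \neq f_i(0,0)$. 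Thus short relevant faces contribute nothing, and the sum collapses to $\sum_{\Gamma \text{ long relevant}} m_\Gamma = \mu_f$.

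Now Lemma~\ref{lem:le-ri-lo-sh(1)} bounds the number of long relevant faces of $A$ by two (at most one left, at most one right). Hence either a single face carries the full weight $\mu_f$, or two faces split it. In the latter case, Remark~\ref{rem:one-var} translates each $m_{\Gamma(i)}$ into the number $N_i$ of solutions in $\C^*$ of $\overline{U}_i(f - q)_{\Gamma(i)} = 0$ for any chosen $U_i \in \mathsf{TM}_{\Gamma(i)}(\Z^2)$, delivering the second claim of the lemma.

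The remaining, and main, obstacle is the single-face case, where I must show the contributing face $\Gamma$ is necessarily origin. I would argue by contradiction: suppose $\Gamma$ is half-origin with, say, $(0,0) \in \Gamma_1$ but $(0,0) \notin \Gamma_2$. Then $\overline{U}(f_2 - q_2)_\Gamma = \overline{U}(f_2)_\Gamma$ does not involve $q_2$, so the univariate system $\overline{U}(f - q)_\Gamma = 0$ has its $q_2$-dependence absent and $m_\Gamma$ depends only on $q_1$. Hence $m_\Gamma = \mu_f$ would hold for every $q' = (q_1, q_2')$ with $q_2' \in \C$. Repeating the opening argument for each such $q'$ (the only contributing face remains $\Gamma$) forces $\mathbb{V}(f - q') = \emptyset$ along the entire line $\{q_1\} \times \C$, contradicting the isolation of $q$ in $\C^2 \setminus f(\C^2)$. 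Finally, the distinctness of the $\mu_f$ solutions follows from $q \notin \mathcal{K}$: by Lemma~\ref{lem:Jel-discr}, no solution in $\C^* \times \{0\}$ of $\overline{U}(f-q) = 0$ has multiplicity at least two, so every solution count is in fact a count of distinct points.
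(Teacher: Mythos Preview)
Your proof is correct and follows the paper's strategy: apply Proposition~\ref{prop:main} to $f-q$, show that only long relevant faces can contribute (ruling out non--semi-origin faces via Lemma~\ref{lem:solutions-infty1}, coordinate faces via the absence of solutions on the axes, and short relevant faces via $q\in\mathcal{C}^\circ$), use $q\notin\mathcal{K}$ together with Lemma~\ref{lem:Jel-discr} for simplicity of the solutions, and invoke Lemma~\ref{lem:le-ri-lo-sh(1)} for the one-or-two dichotomy. Your final paragraph, arguing that a single half-origin contributing face would force the entire line $\{q_1\}\times\C$ out of $f(\C^2)$ and hence contradict the isolation of $q$, actually supplies a justification for the word \emph{origin} in the first bullet that the paper's own proof does not spell out; the one small refinement needed there is that only the \emph{set} of solutions of $\overline{U}(f-q')_\Gamma=0$ is independent of $q_2'$, whereas the multiplicities in the full system might a priori change---but since the $\mu_f$ solutions are already distinct at $q$, one gets $m_\Gamma(q')\geq\mu_f$, and the bound $\sum m_\Gamma\leq V(A)=\mu_f$ from Proposition~\ref{prop:main} then forces equality and kills all other contributions (including coordinate ones), which is exactly what you need.
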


\begin{proof} The system $f-q =0$ has no solutions in $\C^2$. Then, Theorem~\ref{th:BerB} and Remark~\ref{rem:one-var} show that $\ous(f-y) = 0$ has a solution in $\C^*\times\{0\}$. 

The proof will follow from Proposition~\ref{prop:main} and Lemma~\ref{lem:le-ri-lo-sh(1)} once we show the following claim:

\textit{All such faces $\Gamma$ can only be long and relevant. Moreover, solutions to $\ous(f-y) = 0$ in $\C^*\times\{0\}$ are simple.}

Since $f-q= 0$ has no solutions in $\C^2$ (in particular, no solutions in $\C^2\setminus\TT$), $\Gamma$ cannot be a coordinate face. Then, Lemma~\ref{lem:solutions-infty1} yields that $\Gamma$ is pertinent.

To deduce that $\Gamma$ is long, notice that $\Gamma$ short $\Rightarrow$ $f_i(0,0) = q_i$ (for some $i\in\{1,2\}$) $\Rightarrow$ $q\notin\mathcal{C}^\circ$.

Finally, we have $q\notin\mathcal{K}\Rightarrow$ all solutions to $\ous(f-y) = 0$ are simple (see Lemma~\ref{lem:Jel-discr}). This proves the two statements of the claim.
\end{proof} Assume that $\mathcal{I}_\emptyset\cap\mathcal{C}^\circ \setminus \mathcal{K}\neq\emptyset$, and consider the set
\[
\{q_1,\ldots,q_N\}:=\mathcal{I}_\emptyset\cap\mathcal{C}^\circ \setminus \mathcal{K}.
\] Lemma~\ref{lem:simple-transf-sol} shows that $A$ has a long relevant face $\Gamma$. We decompose $\{1,\ldots,N\}$ into a disjoint union $I_1\sqcup\cdots\sqcup I_\lambda$ of non-empty subsets, satisfying $k\in I_i$ $\Leftrightarrow$ there are exactly $m_i\geq 0$ distinct $a\in\C^*$ such that for some matrix $U\in\Trg$, we have
\begin{equation}\label{eq:sys:trsfa}
	\ous(f-q_k)_\Gamma(a) = 0.
\end{equation} Note that this decomposition is not unique.

\begin{example}\label{ex:labeling}
Consider the map $f$ constructed in Section~\ref{subsec:Th3-proof}. We have $\mathcal{C}^\circ=\TT$, $|\mathcal{I}_\emptyset| = 2n$ and $\mu_f=2$. Choose $\Gamma$ to be the pair as in Figure~\ref{fig:supp-ex}.
\[
 \left\lbrace \big(\begin{smallmatrix} 0 \\ 0 \end{smallmatrix}\big),\big(\begin{smallmatrix} 1 \\ 1 \end{smallmatrix}\big)\right\rbrace,~\left\lbrace \cup_{i=1}^n(i-1,i+1)\right\rbrace.
\] For any $q\in\mathcal{I}_\emptyset$, we have 
\[
\ous(f-q)_\Gamma = (t-q,b_0+b_1t+\cdots+b_nt^n),
\] where the choice of $(b_0,b_1,\ldots,b_n)$ is so that $b_0+b_1t+\cdots+b_nt^n$ has exactly $n$ simple roots in $\C^*$. Then, we have $\lambda=1$, with $m_1 = 1$. 
\end{example}

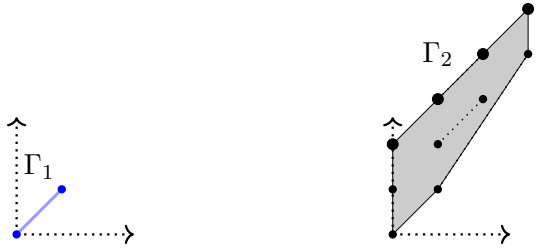
\begin{figure}[h]
\centering
\begin{tikzpicture}
    \tikzstyle{bluefill1} = [fill=blue!20,fill opacity=0.8]          
    \tikzstyle{blackfill1} = [fill=gray,fill opacity=0.4]          
    \tikzstyle{greyfill1} = [fill=gray!20,fill opacity=0.8]          
    \tikzstyle{conefill} = [pattern = north east lines, pattern color=gray]          
    \tikzstyle{ann} = [fill=white,font=\footnotesize,inner sep=1pt] 
    \tikzstyle{ghostfill} = [fill=white]	
    \tikzstyle{ghostdraw} = [draw=black!50]					
	\tikzstyle{ann1} = [font=\footnotesize,inner sep=1pt] 
	\tikzstyle{ann2} = [font=\normalsize,inner sep=1pt] 


\begin{scope}[xshift = -5cm, yshift = 0cm, xscale=1.2, yscale=1.2]


	\draw[arrows=->,line width=0.3 mm, dotted] (0,0)-- (0,1.3); 
	
	\draw[arrows=->,line width=0.3 mm, dotted] (0,0)-- (1.3,0); 

   
   	\draw[ line width=0.4 mm, color = blue!40] (0,0) -- (0.5,0.5); 


		\draw [fill, color = blue] (0,0) circle [radius=0.04];
		\draw [fill, color = blue] (0.5,0.5) circle [radius=0.04];

	\node[ann2] at (0.25,0.75)		{$\Gamma_1$};
 \end{scope}	
	

\begin{scope}[xshift = 0cm, yshift = 0cm, xscale=1.2, yscale=1.2]


	\filldraw[blackfill1,line width=0.0 mm ] (0,0)--(0.5,0.5)--(1.5,2)
										-- (1.5,2.5) -- (0,1)-- (0,0);	


	\draw[arrows=->,line width=0.3 mm, dotted] (0,0)-- (0,1.3); 
	
	\draw[arrows=->,line width=0.3 mm, dotted] (0,0)-- (1.3,0); 



	\draw[ line width=0.2 mm, dotted] (0.5,1.5) -- (1, 2); 	
	\draw[ line width=0.2 mm, dotted] (0.5,1) -- (1, 1.5); 	
	\draw[ line width=0.2 mm, dotted] (0.5,0.5) -- (1.5, 2); 	
		\draw [fill, color = black] (0,0) circle [radius=0.04];
		\draw [fill, color = black] (0,0.5) circle [radius=0.04];
		\draw [fill, color = black] (0,1) circle [radius=0.06];
		\draw [fill, color = black] (0.5,0.5) circle [radius=0.04];
		\draw [fill, color = black] (0.5,1) circle [radius=0.04];
		\draw [fill, color = black] (0.5,1.5) circle [radius=0.06];
		\draw [fill, color = black] (1,1.5) circle [radius=0.04];
		\draw [fill, color = black] (1,2) circle [radius=0.06];
		\draw [fill, color = black] (1.5,2) circle [radius=0.04];
		\draw [fill, color = black] (1.5,2.5) circle [radius=0.06];

	\node[ann2] at (0.5,2)		{$\Gamma_2$};

\end{scope}	
\end{tikzpicture}
\caption{The couple of supports in Example~\ref{ex:labeling}.}\label{fig:supp-ex}
\end{figure}

Let $g_1,g_2\in\C[z_1]$ denote the polynomials $\ous f_{1,\Gamma},\ous f_{2,\Gamma}$ respectively and let $G(r,s)=\underline{0}$ denote the polynomial system 
\begin{equation}\label{eq:sys:nodes1}
  \begin{array}{@{}ccccl@{}}
   g_1(r) - g_1(s) & = & 0,\\
   g_2(r) - g_2(s) & = & 0.   
  \end{array}
\end{equation} Then, Equation~\eqref{eq:sys:trsfa} shows that for any $I_i$ with $m_i\geq 1$, points $a_1,\ldots,a_{m_i}\in\C^*$ above satisfy
\begin{equation}\label{eq:set-nodes}
\{(a_k,a_l)\in\mathbb{V}^\circ(G)~|~k,l=1,\ldots,m_i\}. 
\end{equation} The following observation is obvious.

\begin{claim}\label{claim1}
The set of isolated points in $\mathbb{V}^\circ(G)$ satisfies $r\neq s$.
\end{claim}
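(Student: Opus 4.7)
The plan is essentially immediate from the structure of the system~\eqref{eq:sys:nodes1}: both equations defining $G$ have the antisymmetric form $g_i(r) - g_i(s)$, so they vanish identically on the diagonal. More precisely, for every $a \in \C^*$ we have $g_1(a) - g_1(a) = 0$ and $g_2(a) - g_2(a) = 0$, so the one-dimensional curve $D := \{(a,a) \mid a \in \C^*\}$ is entirely contained in $\mathbb{V}^\circ(G)$.

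I would then conclude by the standard characterization of isolated solutions: if $(r,s) \in \mathbb{V}^\circ(G)$ is an isolated point of the variety, then some Euclidean neighborhood of $(r,s)$ in $\TT$ meets $\mathbb{V}^\circ(G)$ only at $(r,s)$ itself. Since every point $(a,a) \in D$ has arbitrarily close neighbors of the form $(a',a') \in D \subset \mathbb{V}^\circ(G)$, no point on $D$ can be isolated. Hence any isolated solution must lie off the diagonal, i.e. $r \neq s$.

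There is no serious obstacle here; the claim is a direct consequence of the antisymmetry of $G$ in the pair of variables $(r,s)$. The only point worth flagging is that the intended notion of "isolated" is isolated as a zero-dimensional component of the affine variety $\mathbb{V}^\circ(G)$ (equivalently, isolated in the Euclidean topology on $\TT$), which is the usage already fixed in Section~\ref{subs:supp-roots}; under any other reading the statement would be vacuous or meaningless.
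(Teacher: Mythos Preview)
Your argument is correct and is exactly the obvious observation the paper has in mind; the paper does not even supply a proof for this claim, merely introducing it with ``The following observation is obvious.'' Your explanation that the diagonal $\{r=s\}\subset\mathbb{V}^\circ(G)$ is a one-dimensional component, hence contains no isolated points, is the intended reasoning.
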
 This shows that~\eqref{eq:set-nodes} identifies a distinguished subset of $
m_i^2 - m_i$ isolated points in $\mathbb{V}^\circ(G)$.

We proceed as follows: Each set $I_i$ distinguishes a unique subset 
\begin{equation}\label{eq:subset1}
\{q_{k}\}_{k\in I_i}
\end{equation} in $\mathcal{I}_\emptyset$. Since the solutions to $\ous(f-q)_\Gamma =0$ vary with $q$ in~\eqref{eq:subset1}, each $q_{k}$ distinguishes a unique set $\{a_1,\ldots,a_{m_i}\}\subset\C^*$. This in turn distinguishes a subset of $m_i(m_i-1)$ isolated points in $\mathbb{V}^\circ(G)$.  Therefore, each $I_i$ gives rise to $m_i(m_i -1)|I_i|$ distinct points in $\mathbb{V}^\circ(G)$. That is, we have 
\begin{equation}\label{eq:miZ}
\sum_{i} m_i(m_i -1)|I_i|\leq \#\mathbb{V}^\circ(G).
\end{equation} In order to provide an upper bound on $N$ above, we consider two cases according to Lemma~\ref{lem:simple-transf-sol}. In what follows, we use $\mu$ as shorthand to $\mu_f$.

\textbf{Assume that $\Gamma$ is the only long relevant face.} Then, Lemma~\ref{lem:simple-transf-sol} shows that $\lambda=1$, with $m_1= \mu $, and Equation~\eqref{eq:miZ} becomes
\begin{equation}\label{eq:mu-ineq}
\mu(\mu - 1)N\leq \#\mathbb{V}^\circ(G).
\end{equation}

\begin{proof}[Proof of Proposition~\ref{prop:S0f}~\eqref{eq:ineqS01}]
 B\'ezout's theorem shows that 
\begin{equation}\label{eq:nod:Bez}
\#\mathbb{V}^\circ(G)\leq \deg g_1\cdot\deg g_2,
\end{equation} and Lemma~\ref{lem:degree-length} shows that for $j=1,2$ we have 
\begin{equation}\label{eq:deg-boundS0}
\deg g_j=\ell(\Gamma_j)\leq \deg f_j/2.
\end{equation} Therefore, we have $N\leq \deg f_1\cdot\deg f_2/(4\mu(\mu-1))$.
\end{proof} 

\begin{proof}[Proof of Proposition~\ref{prop:S0f}~\eqref{eq:ineqS02}]
 Remark~\ref{rem:dom-indep} shows that for some $i\in\{1,2\}$, say $i=1$, we have $A_i\neq\Gamma_i$.

For any $w\in A_1\setminus \Gamma_1$, we have $\conv(\sigma)\subset \conv(A_1)$ and $\conv(\Gamma_2)\subset\conv(A_2)$. Then, the inequality
\begin{equation}\label{eq:mu=V-S0}
V(\sigma,\Gamma_2)\leq V(A)= \mu
\end{equation} is deduced from Lemma~\ref{lem:gen-non-degree} and from the following fact.
\begin{fact}[monotonicity, see~\cite{Sch14}, Chapter 5.25]\label{fact}
If $L_1,L_2,\Delta_1,\Delta_2\subset\R^2$ are convex bodies such that $L_1\subset\Delta_1$ and $L_2\subset\Delta_2$, then $ V(L_1,L_2)\leq V(\Delta_1,\Delta_2)$.
\end{fact}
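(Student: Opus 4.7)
The plan is to derive the monotonicity of the planar mixed volume from the multilinearity description recalled in Section~\ref{subs:mixed}, combined with the trivial monotonicity of Lebesgue measure under inclusion. Since the bilinear form $V$ is symmetric, it suffices to prove the one-sided version: if $K_1\subseteq K_2$ are planar convex bodies, then $V(K_1,M)\leq V(K_2,M)$ for every convex body $M\subset\R^2$. The full statement will then follow by chaining two such inequalities, namely
\[
V(L_1,L_2)\;\leq\;V(\Delta_1,L_2)\;=\;V(L_2,\Delta_1)\;\leq\;V(\Delta_2,\Delta_1)\;=\;V(\Delta_1,\Delta_2).
\]

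The central computation will be the polynomial expansion
\[
\Vol(K+tM)=\Vol(K)+t\,V(K,M)+t^2\Vol(M),\qquad t\geq 0,
\]
which I plan to obtain by applying the identity $V(A,B)=\Vol(A+B)-\Vol(A)-\Vol(B)$ with $A=K$ and $B=tM$, and then invoking the bilinearity of $V$ together with $\Vol(tM)=t^2\Vol(M)$. Both ingredients are part of the description of mixed volume reviewed in Section~\ref{subs:mixed}.

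Once the expansion is available, the argument is short: $K_1\subseteq K_2$ forces $K_1+tM\subseteq K_2+tM$ for every $t\geq 0$, so monotonicity of Lebesgue measure gives $\Vol(K_1+tM)\leq\Vol(K_2+tM)$. Substituting the expansion on both sides cancels the shared $t^2\Vol(M)$ and rearranges to
\[
t\bigl[V(K_1,M)-V(K_2,M)\bigr]\;\leq\;\Vol(K_2)-\Vol(K_1).
\]
The right-hand side is a fixed (in fact nonnegative) constant independent of $t$, so dividing by $t>0$ and letting $t\to+\infty$ forces $V(K_1,M)\leq V(K_2,M)$, completing the one-sided step.

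The only mild obstacle is justifying the polynomial expansion of $\Vol(K+tM)$ for genuinely arbitrary planar convex bodies. For the applications of Fact~\ref{fact} in this paper this is not an issue: all mixed volumes appearing here (in particular in~\eqref{eq:mu=V-S0}) are evaluated on convex hulls of finite lattice subsets of $\Z^2$, i.e.\ on polytopes, for which the identity is an immediate instance of the multilinearity axiom. For general convex bodies one can reach the same conclusion by approximating in the Hausdorff metric by polytopes and invoking the continuity of both sides, but this extension is not required for the use made of the fact in the present work.
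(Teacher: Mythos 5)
Your argument is correct. Note first that the paper itself does not prove this statement: it is recorded as a Fact with a pointer to Schneider's monograph, so there is no in-paper proof to compare against. Your derivation is a clean, self-contained substitute built entirely from the two properties of $V$ that Section~\ref{subs:mixed} already recalls, namely bilinearity with respect to Minkowski sum (which gives $V(K,tM)=t\,V(K,M)$ and $\Vol(tM)=t^2\Vol(M)$) and the identity $V(A,B)=\Vol(A+B)-\Vol(A)-\Vol(B)$ (which gives both the symmetry you use to chain the two one-sided inequalities and the expansion $\Vol(K+tM)=\Vol(K)+t\,V(K,M)+t^2\Vol(M)$). The inclusion $K_1+tM\subseteq K_2+tM$, monotonicity of Lebesgue measure, cancellation of the $t^2$ term, and the limit $t\to+\infty$ then do exactly what you claim. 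You are also right to flag the one genuine subtlety — that the polynomial expansion for arbitrary convex bodies formally requires either the full Minkowski theory or a polytope approximation — and right that it is moot here, since every application of Fact~\ref{fact} in the paper (in particular~\eqref{eq:mu=V-S0} and the chain of inequalities in the proof of Proposition~\ref{prop:S0f}~\eqref{eq:ineqS02}) involves only convex hulls of finite subsets of $\Z^2$. What your route buys is independence from the external reference at essentially no cost; what the paper's citation buys is brevity and the statement in full generality for arbitrary convex bodies without the approximation step.
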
 Finally, Lemma~\ref{lem:mixed-segments} shows that 
\begin{equation}\label{eq:mixed-length}
\ell(\Gamma_2)\leq \ell(\sigma)\cdot\ell(\Gamma_2)\leq V(\sigma,\Gamma_1).
\end{equation} Therefore, we conclude 
 \[N\overset{~\eqref{eq:mu-ineq}}{\leq}\frac{\mathbb{V}^\circ (G)}{\mu(\mu - 1)} \overset{~\eqref{eq:nod:Bez},~\eqref{eq:deg-boundS0}}{\leq} \frac{\ell(\Gamma_1)\cdot\ell(\Gamma_1)}{\mu(\mu - 1)} \overset{~\eqref{eq:mu=V-S0},~\eqref{eq:mixed-length}}{\leq} \ell(\Gamma_1)\overset{\text{Lemma}~\ref{lem:degree-length} }{\leq}\frac{d_1}{2}.\] 
\end{proof}
\textbf{Assume that there is another long bad face $\Lambda\prec A$.} Lemma~\ref{lem:simple-transf-sol} shows that for each $I_i$, there exists $V\in\Trsf_\Lambda(\Z^2)$, such that we have the following setup. For each $q\in\{q_k\}_{k\in I_i}$, there exists a set $\mathcal{A}_q$ of $m_i$ values $a\in\C^*$, and a set $\mathcal{B}_q$ of $n_i$ distinct values $b\in\C^*$ such that $m_i+n_i = \mu$ and
\begin{equation}\label{eq:sys:UV}
  \begin{array}{@{}ccccl@{}}
   \ous (f-q)_\Gamma(a) & = & 0,\\
   \overline{V}(f-q)_\Lambda(b) & = & 0.   
  \end{array}
\end{equation}
\begin{example}[\ref{ex:labeling} continued]\label{ex:labeling2}
The face $\Lambda$ here is  $ \left(\left\lbrace \big(\begin{smallmatrix} 0 \\ 0 \end{smallmatrix}\big),\big(\begin{smallmatrix} 1 \\ 1 \end{smallmatrix}\big)\right\rbrace\right),~ \left(\left\lbrace \big(\begin{smallmatrix} 0 \\ 0 \end{smallmatrix}\big),\big(\begin{smallmatrix} 1 \\ 1 \end{smallmatrix}\big)\right\rbrace\right)
$, and $\overline{V}(f-q)$ is the pair $(t-q_1,t-q_2)$. From Example~\ref{ex:labeling}, we have $\lambda=1$, and $m_1=1$. Then, for each $q\in\cup_{t\in\mathbb{V}(P)}\{(t,t)\}$ (see Section~\ref{subsec:Th3-proof}), we have $n_1=1$, $\mathcal{A}_q=\mathcal{B}_q=\{t\}$.
\end{example}
For $j=1,2$, we define $g_j$ and $h_j$ to be $\ous f_{j,\Gamma}$ and $\overline{V}g_{j,\Gamma}$ respectively. Then, the sets
\begin{equation}\label{eq:ABq}
  \begin{array}{@{}ccccccc@{}}
   \mathcal{A}_q\times \mathcal{A}_q, & \ & \mathcal{B}_q\times \mathcal{B}_q, & \ & \mathcal{A}_q\times \mathcal{B}_q,
  \end{array}
\end{equation} form subsets of solutions to the systems $G=0$, $H=0$ and $K=0$ respectively, defined by
\begin{equation}\label{eq:sys:big}
  \begin{array}{@{}ccccccc@{}}
   g_j(s) & - & g_j(t) & = & 0, & j=1,2\\
   h_j(u) & - & h_j(v) & = & 0, & j=1,2\\   
   g_j(s) & - & h_j(u) & = & 0, & j=1,2.
  \end{array}
\end{equation} The following Claim is similar to Claim~\ref{claim1}.
\begin{claim}\label{claim:2}
The set of non-isolated points in $\mathbb{V}^\circ(G)$,$\mathbb{V}^\circ(H)$, or $\mathbb{V}^\circ(K)$ satisfy $s=t$.
\end{claim}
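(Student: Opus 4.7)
The plan mirrors the simple observation behind Claim~\ref{claim1}, but in the opposite direction: every positive-dimensional component of $\mathbb{V}^\circ(G)$, $\mathbb{V}^\circ(H)$ or $\mathbb{V}^\circ(K)$ must lie inside a ``diagonal'' locus where the defining equations vanish identically.

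For $G$, since $g_j(s)-g_j(s)\equiv 0$ for $j=1,2$, the line $\{s=t\}$ is automatically a one-dimensional component of $\mathbb{V}(G)$. Factoring $g_j(s)-g_j(t)=(s-t)\,\tilde g_j(s,t)$ yields $\mathbb{V}(G)=\{s=t\}\cup\mathbb{V}(\tilde g_1,\tilde g_2)$, so the claim for $G$ reduces to showing that $\tilde g_1$ and $\tilde g_2$ share no irreducible common factor whose zero locus meets $(\C^*)^2$. The existence of such a factor is equivalent to the parameterized map $\phi\colon z\mapsto(g_1(z),g_2(z))$ being non-birational onto its image, i.e.\ having degree $d\geq 2$. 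I would rule this out via Lemma~\ref{lem:Jel-discr}: if $\phi$ had degree $d\geq 2$, then at every point $y$ on the image curve the transformed system $\ous(f-y)_\Gamma=0$ would have a root in $\C^*\times\{0\}$ of multiplicity at least $d$, forcing the entire arc $\phi(\C)\subset\mathcal{J}_f$ to lie inside $\mathcal{K}_f$. This is incompatible with the existence of the points $q_k\in\mathcal{I}_\emptyset\cap\mathcal{C}^\circ\setminus\mathcal{K}_f$ we are counting, whose $\mathcal{A}_{q_k}$-roots come precisely from this arc. The case $H$ is identical after replacing $g_j,\Gamma,(s,t)$ by $h_j,\Lambda,(u,v)$.

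For $K$, there is no canonical diagonal in the $(s,u)$-plane, so I would interpret ``$s=t$'' here as the ``matching'' locus where the $\phi$- and $\psi$-fibers over a common point of the image coincide naturally (e.g.\ where $g_j(s)=h_j(u)$ is realized by taking the same branch of the rational curve parameterized by both faces). A positive-dimensional component of $\mathbb{V}^\circ(K)$ lying off this locus would once again force non-birationality of one of $\phi$ or $\psi$, triggering the same contradiction with the structure of $\mathcal{K}_f$ as above.

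The main obstacle I anticipate is making the implication ``non-birational $\Rightarrow$ multiplicity $\geq 2$ along an entire arc of $\mathcal{J}_f$'' rigorous while keeping careful track of the monomial normalizations $\ous$ and $\overline{V}$; and, in the $K$-case, pinning down the correct substitute for the diagonal so that the pairs $(a,b)\in\mathcal{A}_q\times\mathcal{B}_q$ appearing in~\eqref{eq:ABq} are genuinely isolated points of $\mathbb{V}^\circ(K)$ and not swallowed by an unexpected positive-dimensional component coming from the interaction of the two faces $\Gamma$ and $\Lambda$.
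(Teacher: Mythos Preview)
The paper gives no proof of Claim~\ref{claim:2}: it is introduced with the sentence ``The following Claim is similar to Claim~\ref{claim1}'', and Claim~\ref{claim1} is in turn labeled an ``obvious'' observation. So there is nothing substantive to compare against; the author treats the statement as evident.

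Your reduction for $G$ and $H$ is the right one: a positive-dimensional component of $\mathbb{V}(G)$ off the diagonal exists precisely when $\phi\colon z\mapsto(g_1(z),g_2(z))$ fails to be birational onto its image. But the contradiction you propose does not go through. If $\phi$ has degree $d\ge 2$, then for a generic $y$ on the image curve the univariate system $\ous(f-y)_\Gamma=0$ has $d$ \emph{distinct simple} roots in $\C^*$, not a single root of multiplicity $d$. Consequently the corresponding solutions of $\ous(f-y)=0$ in $\C^*\times\{0\}$ are simple as well, Lemma~\ref{lem:Jel-discr} does not apply, and no point of the arc is forced into $\mathcal{K}_f$. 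The existence of the $q_k\in\mathcal{I}_\emptyset\cap\mathcal{C}^\circ\setminus\mathcal{K}_f$ is entirely compatible with $\phi$ being $d$-to-$1$: their fibers $\mathcal{A}_{q_k}$ would simply consist of $d$ (or a multiple of $d$) distinct simple preimages. A toy illustration of the failure is $g_1(s)=s^2$, $g_2(s)=s^4$: here $\phi$ has degree $2$, $\mathbb{V}(G)=\{s=t\}\cup\{s=-t\}$ has a non-diagonal component, yet every fiber consists of simple points. So if birationality is to be extracted from generic non-properness, it must come through a different mechanism (e.g.\ the maximality conditions in Definition~\ref{def:gen-prop} forcing the face polynomials $g_1,g_2$ to have no hidden common composition factor), not through $\mathcal{K}_f$.

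For $K$ you correctly observe that the literal condition ``$s=t$'' is meaningless, since $K$ lives in the $(s,u)$-plane. The way the paper \emph{uses} the claim --- counting all $m_in_i$ points of $\mathcal{A}_q\times\mathcal{B}_q$ as isolated, with no subtraction --- indicates that the intended content for $K$ is that $\mathbb{V}^\circ(K)$ has no positive-dimensional component whatsoever (equivalently, the two parametrized arcs coming from $\Gamma$ and $\Lambda$ share no common irreducible component in $\C^2$). Your ``matching locus'' does not correspond to anything canonical here, and the obstacle you flag in your final paragraph is genuine rather than merely technical.
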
 This shows that for each above $q$, there exists a unique subset~\eqref{eq:ABq} identifying a subset of 
\[
m_i^2 - m_i + n_i^2 - n_i + m_in_i
\] isolated points in $\mathbb{V}^\circ(G)\cup\mathbb{V}^\circ(H)\cup\mathbb{V}^\circ(K)$. Then, the above discussion shows that
\begin{equation}\label{eq:sum-smaller-Sum}
\sum_{i}(m_i^2 - m_i + n_i^2 - n_i + m_in_i)\cdot |I_i|\leq \#\mathbb{V}^\circ(G)+ \#\mathbb{V}^\circ(H)+\#\mathbb{V}^\circ(K).
\end{equation} We proceed by computing the upper bounds in Proposition~\ref{prop:S0f}.

Using B\'ezout's theorem, on the systems~\eqref{eq:sys:big} and the inequality of Lemma~\ref{lem:degree-length}, we bound the value in~\eqref{eq:sum-smaller-Sum} by 
\begin{equation}\label{eq:sum-smaller-Sum2}
\ell(\Gamma_1)\cdot\ell(\Gamma_2) + \ell(\Lambda_1)\cdot\ell(\Lambda_2) + \max(\ell(\Gamma_1),\ell(\Lambda_1))\cdot\max(\ell(\Gamma_2),\ell(\Lambda_2)). 
\end{equation} Moreover, the equality part in Lemma~\ref{lem:degree-length} produces the upper bound 
\begin{equation}\label{eq:bound34}
3\deg f_1\deg f_2/4
\end{equation} for~\eqref{eq:sum-smaller-Sum}. Next, we use $N=\sum I_i$, $m_i = \mu - n_i$ in the l.h.s of~\eqref{eq:sum-smaller-Sum}, expand it, then divide by $\mu^2$ to obtain
\begin{equation}\label{eq:kmu2}
N + \sum i\big(\frac{n_i^2}{\mu^2} - \frac{n_i}{\mu} - \frac{1}{\mu} \big)\leq \frac{k}{\mu^2},
\end{equation} where $k$ is the minimum of~\eqref{eq:sum-smaller-Sum2} and~\eqref{eq:bound34}.

Identities $n_i/\mu\leq 1$ and $N=\sum I_i$ imply
\[
\sum i\big(\frac{n_i^2}{\mu^2} - \frac{n_i}{\mu} - \frac{1}{\mu} \big)\leq - \frac{N}{\mu}.
\] Therefore,~\eqref{eq:kmu2} yields
\begin{equation}\label{eq:kmu21}
N \leq \frac{k}{\mu(\mu - 1)}.
\end{equation} 

\begin{proof}[Proof of Proposition~\ref{prop:S0f}~\eqref{eq:ineqS01}] It follows from this inequality for $k=$~\eqref{eq:bound34} and $\mu\geq 2$.
\end{proof}

\begin{proof}[Proof of Proposition~\ref{prop:S0f}~\eqref{eq:ineqS02}] Without loss of generality, we suppose that $\dim A_2 = 2$ (this is possible since $(A_1,A_2)$ has two relevant long faces). Moreover, we may also suppose that $\ell(\Gamma_1)\geq \ell(\Lambda_1)$.

Then, there exists a subset $\sigma\in A_2$ such that $\dim (\sigma +\gamma_1) = 2$. We get
\[
\ell(\sigma)\cdot\ell(\Gamma_1)\overset{\text{Lemma}~\ref{lem:mixed-segments}}{\leq} V(\sigma,\Gamma_1)\overset{\text{Fact}~\ref{fact}}{\leq} V(A) \overset{\text{Lemma}~\ref{lem:gen-non-degree}}{=} \mu.
\] Replacing $k$ by~\eqref{eq:sum-smaller-Sum2}, above inequalities together with~\eqref{eq:kmu21} yield 
\begin{multline*}
N\leq\frac{1}{(\mu - 1)\ell(\Gamma_1)\cdot\ell(\sigma)} \big( \ell(\Gamma_1)\cdot\ell(\Gamma_2) + \ell(\Gamma_1)\cdot\ell(\Lambda_2) + \ell(\Gamma_1)\cdot\max(\ell(\Gamma_2),\ell(\Lambda_2)) \big).\\ 
\overset{\text{Lemma}~\ref{lem:degree-length}}{\leq} 3\deg f/2(\mu - 1).
\end{multline*}
 \end{proof}

\subsection{Geometry of the Jenonek set and missing points of maps}\label{subs:geo-interpr} Upper bounds for $\#\mathbb{V}^\circ(G)+\#\mathbb{V}^\circ(H)+\#\mathbb{V}^\circ(K)$ appearing above estimate the number of some nodes of the Jelonek set. An instance of this is when there exists only one long face $\Gamma\prec A$ that happens to be origin. Then, 
\[
\left\lbrace y\in\C^2~|~\ous(f-y)_\Gamma(a)=0~\text{for some $a\in\C^*$}\right\rbrace
\] describes a part of a rational curve 
\[
C:=\left\lbrace (g_1,g_2)(s)\in\C^2~|~s\in\C^*\right\rbrace.
\] Therefore, the set $\mathbb{V}^\circ(G)$ describes all self-intersections (nodes) that $C$ can have in $\TT$. Namely, if a point $y\in C$ also belongs to $\mathcal{I}_\emptyset\cap \mathcal{C}^\circ\setminus \mathcal{K}$, then it is a node of multiplicity $\mu$. This explains the phenomenon in the map~\eqref{eq:map:ex1} of Section~\ref{sec:intro}. 

As for the case where there are two long relevant faces $\Gamma,\Lambda\prec A$, the set $\mathbb{V}^\circ(G)\cup\mathbb{V}^\circ(H)\cup \mathbb{V}^\circ(K)$ describes both the intersection locus $C\cap C'$ (with $C'$ similarly defined as $C$, using $H$) and the nodes of the curves. Here, each of $C$ and $C'$ can be a rational curve, or a union of vertical (horizontal) lines in $\C^2$. In the map~\eqref{eq:map:ex3}, the set $\mathcal{I}^\emptyset_f\cap \mathcal{C}^\circ\setminus \mathcal{K}$ results from intersections of vertical lines with the line $\{y_1=y_2\}$.
\section{Proof of Proposition~\ref{prop:S1f}}\label{sec:proof-prop2} We consider the generically non-proper map $f$ from the previous section. Recall from Lemma~\ref{lem:Jel-discr} that $\mathcal{K}$ coincides with the set of all $y\in\C^2$ at which there exists a face $\Gamma\prec A$ and a matrix $U\in\Trg$ such that $\ous(f-y) = 0$ has a double solution in $\C^*\times\{0\}$. 

In this section, we prove the inequality
\begin{equation}\label{eq:ineq:S1}
|\mathcal{I}_\emptyset\cap\mathcal{C}^+\setminus\mathcal{K} |\leq \deg f_1 + \deg f_2,
\end{equation} where $\mathcal{C}^+:=\mathcal{C}^+_f$ is the cross centered at $f(0,0)$ (also, it is the set $\C^2\setminus\mathcal{C}^\circ$). 

In the notations of Proposition~\ref{prop:main}, for any $y\in\mathcal{I}_\emptyset\setminus\mathcal{K}$, there are $\mu$ points in $\C^*\times\{0\}$ distributed among simple solutions to systems $\ous(f-y) = 0$. 

We deduce from $y\in\C^2\setminus f(\C^2)$  and from Lemma~\ref{lem:solutions-infty2} that all above matrices $U\in\Trg$ correspond to only relevant faces $\Gamma\prec A$.

\begin{claim}\label{claim:3}
For each $y\in\mathcal{I}_\emptyset\setminus\mathcal{K}$, at least one of the above relevant faces of $A$ is long.
\end{claim}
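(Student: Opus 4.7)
The strategy is by contradiction: assume every relevant face of $A$ contributing at $y$ is short, and produce a one-parameter family of missing points near $y$, contradicting the isolation of $y$ in $\mathcal{I}_\emptyset$.

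First I would carry out a reduction. By Definition~\ref{def:face-non-prop}, a short relevant face has exactly one member equal to $\{(0,0)\}$. A face $\Gamma=(\{(0,0)\},\Gamma_2)$ contributes a solution in $\C^*\times\{0\}$ only when the constant equation $f_1(0,0)-y_1=0$ of $\ous(f-y)_\Gamma$ is satisfied, i.e.\ $y_1=f_1(0,0)$; symmetrically $\Gamma=(\Gamma_1,\{(0,0)\})$ forces $y_2=f_2(0,0)$. If both types of short face contribute, then $y=f(0,0)\in f(\C^2)$, contradicting $y$ missing. So without loss of generality every contributing face has $\Gamma_1=\{(0,0)\}$, $y$ lies on $L:=\{Y_1=f_1(0,0)\}$, and $y_2\neq f_2(0,0)$.

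Second I would show the escape count is locally constant along $L$. For each $\Gamma=(\{(0,0)\},\Gamma_2)$, the contribution $m_\Gamma(y_2')$ at a nearby point $y':=(y_1,y_2')$ is the number of simple roots in $\C^*$ of the univariate polynomial $\ous(f_2-y_2')_{\Gamma_2}$; since $y\notin\mathcal{K}$, Lemma~\ref{lem:Jel-discr} rules out multiple roots, so $m_\Gamma(y_2')\leq \ell(\Gamma_2)$ with equality for generic $y_2'$. Because $y$ is missing we have $\#\mathbb{V}^\circ(f-y)=0$, and Proposition~\ref{prop:main} yields $\sum_\Gamma m_\Gamma(y)=\mu$; the same proposition bounds $\sum_\Gamma m_\Gamma(y')\leq \mu$ at every $y'$. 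Combining, the bound is attained at generic $y'\in L$ near $y$, whence $\#\mathbb{V}^\circ(f-y')=0$ on a dense subset of $L$ near $y$.

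Third I would exclude axis solutions to $f-y'=0$ for such generic $y'$. A supporting direction $\alpha$ of a short face $(\{(0,0)\},\Gamma_2)$ has $\alpha_1\alpha_2<0$ and places $(0,0)$ as the unique minimizer on $A_1$, which forces $A_1\cap\{(0,k):k>0\}=\emptyset$ if $\alpha_2<0$, and the symmetric condition if $\alpha_1<0$. Hence $f_1$ is constant equal to $y_1$ on the corresponding coordinate axis, which therefore lies inside $D:=\{f_1=y_1\}$. The missing hypothesis then forces $f_2$ to be constant on this axis: otherwise $f_2$ restricted to it would be a nonconstant univariate polynomial, hence surjective onto $\C$, meeting $y_2$ and so meeting $y$, a contradiction. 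This yields the analogous vanishing on $A_2$ and makes $f$ send the axis to the single point $(y_1,f_2(0,0))\neq y$. Treating both signs of $\alpha$ covers both coordinate axes of $D$. Hence no generic $y'\in L$ admits an axis solution, and combined with the previous step $\mathbb{V}(f-y')=\emptyset$, so $y'$ is missing, contradicting $y\in\mathcal{I}_\emptyset$.

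The main obstacle is the third step: the axis-solution analysis requires a careful case split on the signs of $\alpha$ among the contributing short faces, and a coherent treatment of the mixed scenario in which both $(+,-)$ and $(-,+)$ directions occur, to ensure that the forced vanishings on $A_1$ and $A_2$ collectively prevent $f$ from covering a neighborhood of $y$ in $L$ via either axis.
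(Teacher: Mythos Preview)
Your strategy coincides with the paper's: assume all contributing relevant faces are short, reduce to a single line $L\subset\mathcal{C}^+$ (this is your Step~1, which the paper does implicitly), and argue that $L$ lies in the missing set, contradicting isolation. The paper compresses the last part into the bare assertion ``therefore $L\subset\C^2\setminus f(\C^2)$''; you attempt to unpack the mechanism.

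Your Step~2, however, contains a real error. The inequality $m_\Gamma(y')\le\ell(\Gamma_2)$ is in the wrong direction and generally false: once $y_1'=f_1(0,0)$, the first transformed equation $\ous(f_1-y_1')$ vanishes identically along $\{t=0\}$, so $\{t=0\}$ is a full component of its zero locus; the bivariate intersection number $m_\Gamma(y')$ at points of $\C^*\times\{0\}$ is then \emph{at least} the number of $\C^*$-roots of $\ous(f_2-y_2')_{\Gamma_2}$, with possible extra contributions from the other branches of $\{\ous(f_1-y_1')=0\}$. Also, $y\notin\mathcal K$ constrains multiplicities only at $y$, not at nearby $y'$, so your genericity claim is unsupported. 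The correct persistence argument uses precisely what is special along $L$: the polynomial $f_1-y_1'$ is \emph{independent} of $y'\in L$. Each simple root $\rho$ of $\ous(f_2-y_2)_{\Gamma_2}$ (simple because $y\notin\mathcal K$) deforms to a nearby root $\rho'$ of $\ous(f_2-y_2')_{\Gamma_2}$ for $y'$ close to $y$, and $(\rho',0)$ remains a solution of the full bivariate system since the fixed first equation already vanishes on $\{t=0\}$. This gives $m_\Gamma(y')\ge m_\Gamma(y)$, hence $\sum_\Gamma m_\Gamma(y')\ge\mu$, and Proposition~\ref{prop:main} forces equality and thus $\#\mathbb V^\circ(f-y')=0$.

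Your Step~3 is, as you anticipate, incomplete, but the difficulty is smaller than you suggest. The sign-of-$\alpha$ argument handles only one coordinate axis. For the other axis, say $\{v=0\}$, no structural input from $\alpha$ is needed: either $f_1(u,0)-f_1(0,0)$ is not identically zero, in which case only finitely many $u$ (hence finitely many values $y_2'=f_2(u,0)$) must be excluded; or it vanishes identically, and then your own ``$y$ missing forces $f_2$ constant on the axis'' argument applies verbatim. Either way only finitely many $y'\in L$ near $y$ admit an axis solution, which together with the corrected Step~2 yields a cofinite subset of a neighborhood of $y$ in $L$ lying inside $\C^2\setminus f(\C^2)$, contradicting $y\in\mathcal{I}_\emptyset$.
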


\begin{proof}
Lemma~\ref{lem:simple-transf-sol} takes care of the case where $y\in\mathcal{C}^\circ$. 

Assume that $y\in\mathcal{C}^+$, and let $\Gamma$ be a short pertinent face of $A$. One can check that the set of all $y\in\C^2$ such that $\ous(f-y)_\Gamma = 0$ has a solution in $\C^*$, is a vertical/horizontal line in $\mathcal{C}^+$. 

If all faces $\Gamma$ are short, then there exists $\mu$ solutions in $\C^*$ to systems above determining the same line $L\subset \mathcal{C}^+$. Therefore, $L\subset \C^2\setminus f(\C^2)$, and $y$ is a missing point that is not isolated.
\end{proof} Thanks to the above Claim, it is enough to give an upper bound on the total number of distinct $y\in\mathcal{C}^+$ such that $\ous(f-y)_\Gamma = 0$ has a solution in $\C^*$ for some long relevant face $\Gamma\prec A$.

We assume that $y$ belongs to the horizontal line $H$ of $\mathcal{C}^+$. Then, the second coordinate of $y$ equals to $f_2(0,0)$. To determine the first coordinate, we solve the (now, triangular) square system $\ous(f-y)_\Gamma = 0$, with $y_2 = f_2(0,0)$.

Lemma~\ref{lem:degree-length} applied to $\ous(f_2-f_2(0))_\Gamma$ shows that we obtain at most $\deg f_2/2$ distinct points in $H$ for each system $\ous(f-y)_\Gamma = 0$. Lemma~\ref{lem:le-ri-lo-sh(1)} shows that there are at most two such systems with $\Gamma$ being long relevant. This amounts to the upper bound $\deg f_2$ if $y\in H$. By symmetry on the vertical line of $\mathcal{C}^+$, we obtain the upper bound $\deg f_1+\deg f_2$.
\section{Critical points at infinity and proof of Proposition~\ref{prop:double-points} }\label{sec:proof-prop3} Keeping with the same notations as in the previous section, let $f$ be a generically non-proper map. To prove 
$
|\mathcal{I}_\emptyset\cap\mathcal{K} |\leq \deg f_1 + \deg f_2
$, we introduce two technical results. Write $\mathcal{I}_\emptyset\cap\mathcal{K}$ as
\begin{equation}\label{eq:split-union}
 \bigcup_{\substack{\Gamma \prec A\\ \Gamma~\text{relevant}}} \mathcal{K}(\Gamma),
\end{equation} where $\mathcal{K}(\Gamma)$ consists of all $y\in\mathcal{I}_\emptyset$ such that $\ous(f-y)=0$ has a solution in $\C^*\times\{0\}$ of multiplicity at least two for some matrix $U\in\Trg$.
\begin{lemma}\label{lem:le-ri-lo-sh(2)}
Let $\Gamma$ be a right (left) long origin relevant face of $A$. Then, there are no right (left) relevant faces of $A$ other than $\Gamma$.
\end{lemma}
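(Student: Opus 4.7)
By the symmetry $(X_1,X_2)\mapsto(X_2,X_1)$ (which swaps the notions of ``right'' and ``left'' in Section~\ref{subsub:dimen-leng}), it suffices to treat the case of a right long origin relevant face $\Gamma$. Let $\alpha=(\alpha_1,\alpha_2)$ be a supporting vector of $\Gamma$ with $\alpha_1<0<\alpha_2$, and set $s:=-\alpha_1/\alpha_2>0$. Since $\Gamma$ is both origin and long, each $\Gamma_i\subset A_i$ ($i=1,2$) is one-dimensional, contains $(0,0)$, and lies on the line $L=\{X_2=sX_1\}$; hence each $\Gamma_i$ contains a lattice point $w^{(i)}=(w^{(i)}_1,w^{(i)}_2)$ with $w^{(i)}_1,w^{(i)}_2>0$ and $w^{(i)}_2=sw^{(i)}_1$. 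Moreover, because $\Gamma_i$ is the $\alpha$-minimizing face of $A_i$ with minimum value $0$, both $A_1$ and $A_2$ are contained in the half-plane $H:=\{X_2\geq sX_1\}$. Trapping \emph{both} $A_1$ and $A_2$ inside $H$ is the decisive consequence of the origin hypothesis.

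Now suppose toward contradiction that $A$ admits a second right relevant face $\Gamma'$. By Lemma~\ref{lem:le-ri-lo-sh(1)} it cannot be long, so $\Gamma'$ is short: exactly one of its members has dimension $0$, and by the relevance condition it must equal $\{(0,0)\}$. Swapping the roles of $A_1$ and $A_2$ if necessary (allowed because both lie in $H$), we may assume $\Gamma'_1=\{(0,0)\}$ and $\dim\Gamma'_2=1$. Let $\alpha'=(\alpha'_1,\alpha'_2)$ be a supporting vector of $\Gamma'$ with $\alpha'_1<0<\alpha'_2$, and set $s':=-\alpha'_1/\alpha'_2>0$. The condition that $(0,0)$ is the unique $\alpha'$-minimum on $A_1$, applied to $w^{(1)}\in A_1$, gives $\alpha'\cdot w^{(1)}>0$; using $w^{(1)}_2=sw^{(1)}_1$ and $w^{(1)}_1>0$, this reduces to the strict slope inequality $s>s'$.

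To derive the contradiction, take any $u=(u_1,u_2)\in A_2$. The inclusion $A_2\subset H$ yields $u_2\geq su_1$, so
\[
\alpha'\cdot u \;=\; \alpha'_1 u_1+\alpha'_2 u_2 \;\geq\; \alpha'_2(s-s')u_1 \;\geq\; 0.
\]
When $u_1>0$ the right-hand side is strictly positive since $s>s'$ and $\alpha'_2>0$; when $u_1=0$ one has $\alpha'\cdot u=\alpha'_2 u_2$, which is strictly positive unless $u=(0,0)$. Thus $(0,0)$ is the unique $\alpha'$-minimizer on $A_2$, forcing $\Gamma'_2=\{(0,0)\}$ and contradicting $\dim\Gamma'_2=1$.

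The main step is the slope comparison $s>s'$, and the single conceptual obstacle is correctly identifying the geometric constraint: once the origin hypothesis is used to trap both $A_1$ and $A_2$ inside $H$, the rest reduces to elementary half-plane geometry in $\R^2$. Without the origin hypothesis, $A_2$ could have points below $L$ and genuinely support a one-dimensional face along $\alpha'$, so the statement of the lemma would fail in that generality; this also clarifies why Lemma~\ref{lem:le-ri-lo-sh(1)} alone is not enough to conclude.
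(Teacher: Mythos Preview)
Your argument is correct and rests on the same geometric observation as the paper's: because $\Gamma$ is \emph{origin}, the supporting hyperplane $\{\alpha\cdot x=0\}$ passes through $(0,0)$ and both $A_1,A_2$ lie in the half-plane $H=\{X_2\ge sX_1\}$; this is precisely what forces any other right supporting direction to pick out only $(0,0)$ on each $A_i$. The paper organises this more tersely by parametrising all right supporting vectors as $\alpha^p=(\alpha_1,\alpha_2+p)$ and checking the two signs of $p$ directly: for $p<0$ neither member of the resulting face contains $(0,0)$ (so it is not semi-origin, hence not relevant), while for $p>0$ both members collapse to $\{(0,0)\}$. Your route via Lemma~\ref{lem:le-ri-lo-sh(1)} and the slope comparison $s>s'$ reproduces exactly the $p>0$ branch of this dichotomy, and your hypothesis ``$\Gamma'$ relevant $\Rightarrow$ one member is $\{(0,0)\}$'' implicitly discards the $p<0$ branch.

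One small imprecision: ``short'' means \emph{at least} one member has dimension~$0$, not exactly one; the case where both members are the vertex $\{(0,0)\}$ is the zero-dimensional origin face that the paper also obtains for $p>0$ and tacitly sets aside (it is not a genuine competitor to $\Gamma$ in the application, since $\Trg$ and $\mathcal{K}(\Gamma)$ are only defined for one-dimensional faces). Once you note that this degenerate case is excluded, your ``exactly one'' is justified and the argument goes through unchanged.
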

\begin{proof}
Let $\alpha:=(\alpha_1,\alpha_2)\in\Q_-\times\Q_+$ be a supporting vector of $\Gamma$ assuming it is right. Then, for any $p\in\Q^*$, the vector $\alpha^p:=(\alpha_1,\alpha_2+p)$ supports a face of $A$ different form $\Gamma$. All right faces are determined this way.

If $p>0$, then $\alpha^p$ determines the face $\big(\left\lbrace \big(\begin{smallmatrix} 0\\ 0 \end{smallmatrix}\big)\right\rbrace,~\left\lbrace \big(\begin{smallmatrix} 0\\ 0 \end{smallmatrix}\big)\right\rbrace \big)$. Otherwise, the face determined by $\alpha^p$ is not relevant.
\end{proof}
The following result will be proven at the end of this section. Recall that for any finite set $\sigma\subset\Z^2$, we define $\ell(\sigma):=|\conv(\sigma)\cap\Z^2|-1$.

\begin{proposition}\label{prop:mult}
Let $\Gamma:=(\Gamma_1,\Gamma_2)$ be a pertinent face of $A$. Then, we have

\begin{enumerate}[label=(\Alph*)]

	\item\label{it:mult:deg-deg} $|\mathcal{K}(\Gamma)|\leq (\deg f_1 + \deg f_2)/2$,
	
	\item\label{it:mult:len-deg} $|\mathcal{K}(\Gamma)|\leq \ell(\Gamma_i) + \deg f_j/2$ if $\Gamma_j$ is a vertex of $A_j$ for any distinct $i,j\in\{1,2\}$ and
	
	\item\label{it:mult:len-len}  $|\mathcal{K}(\Gamma)|\leq \ell (\Gamma_i)$ if $\Gamma_i$ does not contain $(0,0)$ for some $i\in\{1,2\}$.
\end{enumerate}
\end{proposition}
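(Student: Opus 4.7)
My plan is to characterize $\mathcal{K}(\Gamma)$ via an explicit system of univariate polynomial equations in a parameter $\rho \in \C^*$, and then bound the number of solutions by degree considerations via Lemma~\ref{lem:degree-length}. Fix $U \in \Trg$ and expand, for each $y \in \C^2$,
\begin{equation*}
\overline{U}(f_i - y_i)(z_1, z_2) \;=\; g_i^y(z_1) \;+\; z_2 \, \tilde{g}_i^y(z_1) \;+\; O(z_2^2),
\end{equation*}
where $g_i^y = \overline{U}(f_i - y_i)_\Gamma$ is univariate in $z_1$ by Remark~\ref{rem:one-var}. By Lemma~\ref{lem:Jel-discr}, $y \in \mathcal{K}(\Gamma)$ precisely when there exists $\rho \in \C^*$ such that $(\rho, 0)$ is a solution of $\overline{U}(f-y) = 0$ of intersection multiplicity $\geq 2$, equivalently
\begin{equation*}
g_1^y(\rho) \;=\; g_2^y(\rho) \;=\; 0, \qquad (g_1^y)'(\rho)\, \tilde{g}_2^y(\rho) \;-\; (g_2^y)'(\rho)\, \tilde{g}_1^y(\rho) \;=\; 0.
\end{equation*}
Writing $h_i := \overline{U}f_{i,\Gamma}$ and letting $(s_i, t_i) \in \N^2$ be the image of $(0,0)$ under $U$ followed by the shift $r_i$, the parameter $y_i$ enters $g_i^y = h_i - y_i z_1^{s_i}$ precisely when $t_i = 0$ (iff $(0,0) \in \Gamma_i$), and enters $\tilde{g}_i^y$ precisely when $t_i = 1$.

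For part~\ref{it:mult:len-len}, assume without loss of generality $(0,0) \notin \Gamma_2$; then $g_2^y = h_2$ is independent of $y$, and $\rho$ must be one of the $\deg h_2 = \ell(\Gamma_2)$ roots of $h_2$ (by the equality in Lemma~\ref{lem:degree-length}). For each such $\rho$, the linear equation $g_1^y(\rho) = 0$ recovers $y_1$ uniquely, and the Jacobian equation (linear in $y_2$ whenever that dependence is non-trivial) recovers $y_2$; hence $|\mathcal{K}(\Gamma)| \leq \ell(\Gamma_2) = \ell(\Gamma_i)$.

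For part~\ref{it:mult:len-deg}, relevance of $\Gamma$ forces $\Gamma_j = \{(0,0)\}$, so $g_j^y(z_1) = c_j - y_j$ is constant in $z_1$. Then $g_j^y(\rho) = 0$ fixes $y_j = c_j$ and leaves $\rho$ free, while $g_i^y(\rho) = 0$ parametrizes $y_i = h_i(\rho)/\rho^{s_i}$. The Jacobian equation collapses to $(g_i^y)'(\rho)\, \tilde{g}_j^y(\rho) = 0$, which after substituting $y_i$ becomes a polynomial equation in $\rho$ of degree at most $\ell(\Gamma_i) + \deg f_j/2$; the first summand comes from Lemma~\ref{lem:degree-length} applied to $g_i^y$, and the second from Lemma~\ref{lem:degree-length} applied to the segment of $A_j$ supporting $\tilde{g}_j^y$. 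For part~\ref{it:mult:deg-deg}, when $\Gamma$ is an origin face with $\dim \Gamma_i = 1$ for $i = 1, 2$, simultaneously setting $g_i^y(\rho) = 0$ gives the rational parametrization $y_i = h_i(\rho)/\rho^{s_i}$ of $\mathcal{J}_\Gamma$, and substituting into the Jacobian equation produces a polynomial in $\rho$ whose degree is bounded by $\max\!\bigl(\ell(\Gamma_1) + \ell(\Gamma_2^{(1)}),\, \ell(\Gamma_2) + \ell(\Gamma_1^{(1)})\bigr) \leq (\deg f_1 + \deg f_2)/2$ via Lemma~\ref{lem:degree-length} (where $\Gamma_i^{(1)}$ denotes the second-layer face of $UA_i + r_i$). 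Half-origin and short subcases reduce to the sharper bounds from parts~\ref{it:mult:len-len} or~\ref{it:mult:len-deg}.

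The principal obstacle is the careful bookkeeping of the dependence of $g_i^y$ and $\tilde{g}_i^y$ on $y_i$, which is governed by the vertical position $t_i$ of $U(0,0) + r_i$ in the transformed polytope $UA_i + r_i$. Verifying that the resulting Jacobian polynomial in $\rho$ is not identically zero (so the degree count is meaningful), and that its degree matches the claimed bound in each geometric configuration, is the most delicate step.
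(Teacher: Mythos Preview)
Your outline is essentially the paper's own argument: expand $\overline{U}(f_i-y_i)$ in powers of $z_2$, write $\mathcal{K}(\Gamma)$ as the set of $y$ for which the level-zero system $g_1^y=g_2^y=0$ together with the Jacobian determinant $(g_1^y)'\,\tilde g_2^y-(g_2^y)'\,\tilde g_1^y$ vanish at some $\rho\in\C^*$, and bound the degree of the resulting univariate condition via Lemma~\ref{lem:degree-length}. The paper does exactly this, with $D=g_{1,1}g_{2,0}'-g_{2,1}g_{1,0}'$ in the origin case and an analogous $\tilde D$ in the half-origin case.

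The one substantive gap is in part~\ref{it:mult:len-len}. You say the Jacobian equation is ``linear in $y_2$ whenever that dependence is non-trivial'' and then count one $y$ per root of $h_2$; but you never argue that the dependence \emph{is} non-trivial. In your own notation this is the case $t_2\ge 2$: then $y_2$ appears neither in $g_2^y$ nor in $\tilde g_2^y$, the Jacobian at $z_2=0$ is independent of $y$, and for each admissible $\rho$ an entire vertical line $\{y_1=h_1(\rho)/\rho^{s_1}\}\times\C_{y_2}$ carries a double solution at $(\rho,0)$. Your bound $\ell(\Gamma_i)$ then simply does not follow. You flag this bookkeeping as ``the most delicate step'', but the mechanism you are missing is the generic non-properness hypothesis itself: the paper shows that if $t_2\neq 1$ (and $\mathcal{K}(\Gamma)\neq\emptyset$), then the pair $(\overline{U}f_2,\,|\Jac\,\overline{U}f|)$ has a common zero in $\C^*\times\{0\}$, which by Proposition~\ref{prop:main} forces $\#\mathbb{V}^\circ(f_2,|\Jac f|)<V(A_2,\Sigma)$, contradicting Definition~\ref{def:gen-prop}. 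This is the only place in the whole argument where the second and third lines of~\eqref{eq:def:gen} are invoked, and without it part~\ref{it:mult:len-len} (and hence the half-origin instances of~\ref{it:mult:deg-deg} and~\ref{it:mult:len-deg}) remains open.
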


\begin{proof}[Proof of Proposition~\ref{prop:double-points}] Let us split the union~\eqref{eq:split-union} into two disjoint subsets  
\[
M^l\sqcup M^r,
\] formed by the contribution of left faces of $A$, and the other by its right ones. The result will follow by showing that each of $|M^r|$ and $|M^l|$ is bounded by $(\deg f_1+\deg f_2)/2$. By symmetry, it suffices to give an upper bound for $|M^l|$.

Assume first that $A$ has a long right relevant origin face (recall Definition~\ref{def:face-non-prop}). From Lemma~\ref{lem:le-ri-lo-sh(2)}, it is the unique right relevant face. Item~\ref{it:mult:deg-deg} of Proposition~\ref{prop:mult} shows that 
\[
|M^l|\leq (\deg f_1+\deg f_2)/2.
\]

Assume now that $A$ does \emph{not} have long origin right relevant faces (In Figure~\ref{fig:couple-short}, for example, the only long relevant face is not origin). 

Denote by $\Gamma(0)$ the origin right short face of $A$, and suppose that $\Gamma_1(0)\subset A_1$ is the one-dimensional member of $\Gamma(0)$ (we have $\Gamma(0)=(a,\blacksquare)$ in  Figure~\ref{fig:couple-short}). Then, Item~\ref{it:mult:len-deg} of Proposition~\ref{prop:mult} shows that 
\[
|\mathcal{K}(\Gamma (0))|\leq\ell (\Gamma_1(0)) + \deg f_2/2.
\] We have the following easy fact: 

\emph{Each face $\Gamma(i)\prec A$ in the collection of all other bad left faces $\{\Gamma(1),\ldots,\Gamma(k)\}$ is half-origin, and its first member $\Gamma_1(i)\subset A_1$ does not contain $(0,0)$.} 

In Figure~\ref{fig:couple-short}, for example, we have $k=3$, $\Gamma(1)=(b,\blacksquare)$, $\Gamma(2)=(c,\blacksquare)$ and $\Gamma(3)=(d,e)$. Therefore, Item~\ref{it:mult:len-len} of Proposition~\ref{prop:mult} shows that 
\[
\sum_{i=1}^{k}|\mathcal{K}(\Gamma(i))|\leq \sum_{i=1}^k\ell(\Gamma_1(i)).
\] Finally, note that the collection $\Gamma_1(0),\Gamma_1(1),\ldots,\Gamma_1(k)$ forms a chain of left faces of $A_1$. Therefore, the result follows from $\sum_{i=0}^k\ell(\Gamma_1(i))\leq \deg f_1/2$.
\end{proof}

\subsection{Proof of Proposition~\ref{prop:double-points}} Pick any matrix $U\in\Trg$. The following Lemma shows that it suffices to prove the result only for $U$.

\begin{lemma}\label{lem:multip}
The set $\mathcal{K}(\Gamma)$ does not depend on the choice of the matrix in $\Trg$.
\end{lemma}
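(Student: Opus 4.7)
The plan is to exhibit a biholomorphic change of coordinates identifying the zero loci of $\ous(f-y)$ and $\overline{U'}(f-y)$ on a neighborhood of $\C^*\times\{0\}$. By the construction of $\Trg$ in Section~\ref{subsub:base-change-faces}, for every $U\in\Trg$ the first column $\tilde e_1$ of $E=U^{-1}$ is uniquely determined by $\Gamma$, while the second column $\tilde e_2$ is only determined modulo $\Z\cdot\tilde e_1$ subject to the positivity condition \ref{it:ii}. Consequently, any two $U,U'\in\Trg$ are related by $U' = V U$ with $V = \begin{pmatrix} 1 & -k\\ 0 & 1 \end{pmatrix}$ for some $k\in\Z$.

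Translating this into variables, the identity $x^b = z^{Ub} = z'^{VUb}$ combined with the upper-triangular shape of $V$ yields the explicit coordinate change
\[
  \Phi\colon (z_1,z_2)\longmapsto (z_1',z_2') = (z_1,\, z_1^{k} z_2),
\]
which is a biholomorphism of $\C^*_{z_1}\times\C_{z_2}$ fixing $\C^*\times\{0\}$ pointwise. By the same computation the Laurent polynomials satisfy the intertwining $U'(f-y)(\Phi(z)) = U(f-y)(z)$.

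It remains to compare $\ous(f-y) = z^{r}\,U(f-y)$ with $\overline{U'}(f-y) = z'^{r'}\,U'(f-y)$ after pulling back by $\Phi$. The key numerical fact is that $V$ preserves the second coordinate of every exponent vector, hence $\min_{b\in A_i}(U'b)_2 = \min_{b\in A_i}(Ub)_2$, which forces $(r'_i)_2 = (r_i)_2$. Therefore the monomial ratio $\Phi(z)^{r'_i}/z^{r_i}$ is a pure power of $z_1$ and is thus a unit in the local ring $\mathcal{O}_{\C^2,(a,0)}$ for any $a\in\C^*$. Combined with the fact that $\Phi$ is a local biholomorphism fixing $(a,0)$, this shows that $\ous(f-y)$ and $\overline{U'}(f-y)\circ\Phi$ generate the same ideal at $(a,0)$ up to units, so the local intersection multiplicity of any solution in $\C^*\times\{0\}$ is preserved upon replacing $U$ by $U'$. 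Hence $\mathcal{K}(\Gamma)$ is independent of the chosen matrix in $\Trg$. The main technical subtlety is the simultaneous control of the two independent denominator-clearing shifts $r$ and $r'$; the identity $(r'_i)_2 = (r_i)_2$, which makes the discrepancy harmless near $\C^*\times\{0\}$, is precisely a manifestation of the fact that only $\tilde e_2$ (and not $\tilde e_1$) can vary between $U$ and $U'$.
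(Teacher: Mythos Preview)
Your argument is correct. The paper's own proof is a two-sentence sketch: it invokes that unimodular matrices preserve solution multiplicities and that all transformations in $\Trg$ agree after restriction to $\Gamma$, then declares the conclusion. Your approach is the same idea made concrete: you identify exactly how two matrices $U,U'\in\Trg$ differ (by an upper-triangular $V$), translate this into the explicit biholomorphism $\Phi(z_1,z_2)=(z_1,z_1^k z_2)$ of $\C^*\times\C$ fixing $\C^*\times\{0\}$, and then verify that the denominator-clearing monomials differ only by a power of $z_1$, hence a unit near any $(a,0)$ with $a\in\C^*$. This last step is exactly what the paper's sketch glosses over: the phrase ``unimodularity preserves multiplicities'' is immediate on $\TT$ but requires an argument like yours to conclude anything about local intersection multiplicities at boundary points $(a,0)$ of the full system $\ous(f-y)$ (not just its restriction to $\Gamma$). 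So your version is not a different route so much as a rigorous unpacking of the paper's appeal to unimodularity; what it buys is an honest justification of the multiplicity claim.
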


\begin{proof}
The unimodularity of matrices in $\Trg$ implies that the number of solutions to systems, as well as their multiplicities, is preserved. Moreover, all transformed systems under $U$ are equal when restricted to $\Gamma$. Therefore, any transformation of $f-y $ under $U$ gives a unique number of solutions in $\C^*\times\{0\}$, together with multiplicities.
\end{proof}

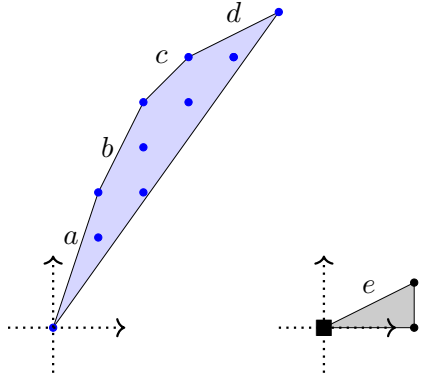
\begin{figure}
\centering
\begin{tikzpicture}
    \tikzstyle{conefill1} = [fill=blue!20,fill opacity=0.8]          
    \tikzstyle{conefill} = [pattern = north east lines, pattern color=gray]          
    \tikzstyle{ann} = [fill=white,font=\footnotesize,inner sep=1pt] 
    \tikzstyle{ghostfill} = [fill=white]	
    \tikzstyle{ghostdraw} = [draw=black!50]					
	\tikzstyle{ann1} = [font=\footnotesize,inner sep=1pt] 
	\tikzstyle{bluefill1} = [fill=blue!20,fill opacity=0.8]          
    \tikzstyle{blackfill1} = [fill=gray,fill opacity=0.4]          
    \tikzstyle{greyfill1} = [fill=gray!20,fill opacity=0.8]          
	\tikzstyle{ann1} = [font=\footnotesize,inner sep=1pt] 
	\tikzstyle{ann2} = [font=\normalsize,inner sep=1pt] 

\begin{scope}[xshift = 3cm, yshift = 0.5cm, xscale=1.2, yscale=1.2]


	\filldraw[bluefill1,line width=0.0 mm ](0,0)--(0.5,1.5)--(1,2.5)
										-- (1.5,3) -- (2.5,3.5)-- (0,0) ;	

	\draw [fill, color = blue ] (2,3) circle [radius=0.04];

	\draw [fill, color = blue ] (2.5,3.5) circle [radius=0.04];

	\draw [fill, color = blue] (0,0) circle [radius=0.04];	
	
	\draw [fill, color = blue] (0.5,1) circle [radius=0.04];

	\draw [fill, color = blue] (1,2) circle [radius=0.04];
		
	\draw [fill, color = blue] (1,2.5) circle [radius=0.04];

	\draw [fill, color = blue] (0.5,1.5) circle [radius=0.04];

	\draw [fill, color = blue] (1,1.5) circle [radius=0.04];

	\draw [fill, color = blue] (1.5,2.5) circle [radius=0.04];
		
	\draw [fill, color = blue] (1.5,3) circle [radius=0.04];

	\draw [fill, color = blue] (2,3) circle [radius=0.04];


	\draw[arrows=->,line width=0.3 mm, dotted] (0,-0.5)-- (0,0.8); 
	
	\draw[arrows=->,line width=0.3 mm, dotted] (-0.5,0)-- (0.8,0); 


		\node[ann2] at (0.2,1)   {$a$};	
		\node[ann2] at (0.6,2)   {$b$};	
		\node[ann2] at (1.2,3)   {$c$};
		\node[ann2] at (2,3.5)   {$d$};

\begin{scope}[xshift = 3 cm, yshift = 0cm]	


	\filldraw[blackfill1,line width=0.0 mm ](0,0)--(1,0.5)--(1,0)-- (0,0);	


%
%
	

	\draw [fill, color = black ] (1,0.5) circle [radius=0.04];

	\draw [fill, color = black ] (1,0) circle [radius=0.04];
	

	\draw[arrows=->,line width=0.3 mm, dotted] (0,-0.5)-- (0,0.8); 
	
	\draw[arrows=->,line width=0.3 mm, dotted] (-0.5,0)-- (0.8,0);

		\node[ann1] at (0,0)   {$\blacksquare$};
		\node[ann2] at (0.5,0.45)   {$e$};

\end{scope}

\end{scope}

\end{tikzpicture}
\caption{A pair having several short left bad faces}\label{fig:couple-short}
\end{figure}

Replace $(z_1,z_2)$ by $(s,t)$ and $\ous (f-y)$ by $g:=(g_1,g_2)$, that is $g_1,g_2\in\C[s,t,y_1,y_2]$. 

Assume first that $\Gamma$ is an origin face of $A$. Then, for $i=1,2$, the polynomial $g_i$ is expressed as
\[
\sum_{j=0}^{m_i}t^j g_{i,j}(s) - y_i.
\] A point $y\in\mathcal{K}$ satisfies
\begin{equation}\label{eq:sys:critical}
  \begin{array}{@{}cccccccc@{}}
   g_{1,0}  -  y_1 & =  & g_{2,0}  -  y_2 & = & D & = & 0,
  \end{array} 
\end{equation} where $D\in\C[s]$  is the polynomial determining $|\Jac g|$ at $t =0$.

On the one hand,~\eqref{eq:sys:critical} has at most $\deg D$ distinct solutions in $\C^3$. This implies that 
\[
|\mathcal{K}(\Gamma)|\leq \deg D.
\] On the other hand, we have 
\[
D=g_{1,1}\cdot g_{2,0}' - g_{2,1}\cdot g'_{1,0},
\] where the derivation is w.r.t. $z_1$. Therefore, Item~\ref{it:mult:deg-deg} of Proposition~\ref{prop:mult} follows form Lemma~\ref{lem:degree-length} by observing that $g_{i,j} = g_{i|\sigma}$, for some horizontal set of points $\sigma\subset\N^2$. Item~\ref{it:mult:len-deg} of Proposition~\ref{prop:mult} follows similarly by additionally noting that $\deg g_{i,0}=\ell(\Gamma_i)$.

Assume that one of the members, say, $\Gamma_2$, of $\Gamma$ does not contain $(0,0)$. Then, the polynomial $g_2$ is written as 
\[
\sum_{j=0}^{m_i}t^jg_{2,j}(s) - s^vt^wy_2,
\] where $(v,w)\in\N^2$. The polynomial $\tilde{D}$ determining $|\Jac g|$ at $t =0$ is a polynomial in $s$ and $y_2$
\[
(g_{2,1} - sy_2\delta_{k1})\cdot g_{1,0}' - g_{1,1}\cdot g'_{2,0},
\] where $\delta_{k1}$ is the Kronecker delta. The system~\eqref{eq:sys:critical} is written as $ g_{1,0}  -  y_1  =  g_{2,0}  =  \tilde{D} = 0$. If $k=1$, we get 
\[
|\mathcal{K}(\Gamma)|\leq \deg g_{2,0}\leq \ell (\Gamma_2),
\] from using similar arguments as in the previous cases ($\Gamma$ origin). This proves~\ref{it:mult:len-len} ($\Rightarrow$~\ref{it:mult:len-deg}) for $k=1$, and thus Lemma~\ref{lem:degree-length} yields~\ref{it:mult:deg-deg}.

We finish the proof by showing that $f$ is generically non-proper $\Rightarrow k=1$. 

Assume that $k\neq 1$. Then, $\tilde{D}$ depends only on $s$, and the system $ g_2  =  0, |\Jac g| =0$ has a solution in $\C^*\times\{0\}$. One can check, using the chain rule for partial derivations that the sets  $\{|\Jac g|=0\}$ and  $\{\ous |\Jac f|)=0\}$  are equal in $\TT$. Then, Proposition~\ref{prop:main} shows that \[n_0:=\#\mathbb{V}^\circ (g_2,\ous |\Jac f|) < V(\supp g_2, \supp \ous J\!f).
\] Since elements in $\Trg$ preserve multiplicities for solutions to the transformed systems, we deduce from Lemma~\ref{lem:multip} that 
\[
\#\mathbb{V}^\circ(f_2, |\Jac f|) = n_0.
\] Finally, since $V(\supp g_2, \supp \ous |\Jac f|) = V(\supp f_2,\supp |\Jac f|)$~\cite{Ber75}, we get 
\[
n_0<V(\supp f_2,\supp \Jac f),
\] which implies that $f$ is not generically non-proper, a contradiction. 
\subsection{Cusps of the Jelonek set}\label{subs:geo-interp2} In contrast to Section~\ref{subs:geo-interpr}, singularities of the Jelonek set $\mathcal{J}_f$ that are not nodes or self-intersection, can be also good candidates for isolated missing points of $f$.

Consider the map~\eqref{eq:map:ex2} defined in Section~\ref{sec:intro}, whose Jelonek set has a cusp coinciding with the unique missing point $(0,1)$. This is not a coincidence, and the reason for this goes as follows.

On the one hand, solutions in $\C^*\times\C^2$ to the system~\eqref{eq:sys:critical} account for all cusps that a parametrized curve $C$ (from Section~\ref{subs:geo-interpr}) may have. On the other hand, Lemma~\ref{lem:solutions-infty2}, applied to the unique relevant face $\Gamma$ of $A$, where 
\[
\Gamma_1 = \left\lbrace\big(\begin{smallmatrix} 0\\ 0 \end{smallmatrix}\big),\big(\begin{smallmatrix} 1\\ 1 \end{smallmatrix}\big),\big(\begin{smallmatrix} 2\\ 2 \end{smallmatrix}\big)\right\rbrace,~\Gamma_2 = \left\lbrace\big(\begin{smallmatrix} 0\\ 0 \end{smallmatrix}\big),\big(\begin{smallmatrix} 1\\ 1 \end{smallmatrix}\big),\big(\begin{smallmatrix} 2\\ 2 \end{smallmatrix}\big),\big(\begin{smallmatrix} 3\\ 3 \end{smallmatrix}\big)\right\rbrace,
\] shows that $\mathcal{J}_f$ is expressed as the set of all $y\in\C^2$ at which $y=\ous f_\Gamma (s)$ for some matrix $U\in\Trg$. This makes $\mathcal{J}_f$ a curve whose $i$-th coordinate has the above parametrization.
\subsection*{Contact}
  Boulos El Hilany,
  Johann Radon Institute for Computational and Applied Mathematics,
 Altenberger Stra\ss e 69,
 4040 Linz,
 Austria;
\href{mailto:boulos.hilani@gmail.com}{boulos.hilani@gmail.com}, \href{https://boulos-elhilany.com}{boulos-elhilany.com}.

\bibliographystyle{abbrv}					   

\bibliography{C:/Users/boulo/Dropbox/Personal_data/Maths/Latex-Paths/Bibliography/mainbib}

\end{document}